\definecolor{halfgray}{gray}{0.55}
\definecolor{OliveGreen}{rgb}{0,.35,0}
\definecolor{webbrown}{rgb}{.6,0,0}
\definecolor{BrightViolet}{rgb}{0.5,0.2,0.8}
\definecolor{Maroon}{cmyk}{0, 0.87, 0.68, 0.32}
\definecolor{RoyalBlue}{cmyk}{1, 0.50, 0, 0.25}
\definecolor{Black}{cmyk}{0, 0, 0, 0}
\newcommand{\R}{\mathbb{R}}
\DeclareMathOperator{\bd}{bd}
\DeclareMathOperator{\bigoh}{\mathcal O}
\DeclareMathOperator{\exclude}{\backslash}
\DeclareMathOperator{\Int}{int}
\DeclareMathOperator{\midd}{\|}
\DeclareMathOperator{\simplex}{\Delta}
\DeclareMathOperator{\supp}{supp}
\newcommand{\eps}{\varepsilon}
\newcommand{\from}{\colon}
\newcommand{\txs}{\textstyle}
\newcommand{\insum}{\sum\nolimits}
\newcommand{\inprod}{\prod\nolimits}
\newcommand{\set}{\mathcal{S}}
\newcommand{\play}{\mathcal{N}}
\newcommand{\act}{\mathcal{A}}
\newcommand{\strat}{X}
\newcommand{\game}{\mathfrak{G}}
\DeclareMathOperator{\sign}{sgn}
\newcommand{\dd}{\;d}
\newcommand{\id}{\dd}
\newcommand{\ddt}[1][]{\frac{d^{#1}}{dt^{#1}}}
\newcommand{\flow}{\Theta}
\newcommand{\phase}{\Omega}
\newcommand{\intstrat}{\Int(\strat)}
\newcommand{\cone}{T^{c}}
\newcommand{\dkl}{D_{\textup{KL}}}
\newcommand{\pf}{\mathcal{Z}}
\newcommand{\sw}{$\mathbf{S}^{\infty}\mathbf{W}$\xspace}
\newcommand{\neginf}{\boldsymbol{-\infty}}
\theoremstyle{plain}
\newtheorem{theorem}{Theorem}
\newtheorem{corollary}[theorem]{Corollary}
\newtheorem*{corollary*}{Corollary}
\newtheorem{lemma}[theorem]{Lemma}
\newtheorem{proposition}[theorem]{Proposition}
\newtheorem*{proposition*}{Proposition}
\theoremstyle{definition}
\newtheorem*{definition*}{Definition}
\theoremstyle{remark}
\newtheorem{remark}{Remark}
\newtheorem*{remark*}{Remark}
\numberwithin{equation}{section}
\numberwithin{theorem}{section}
\begin{document}



\title{Higher Order Game Dynamics}

\author{Rida Laraki}
\address{French National Center for Scientific Research (CNRS) and \'Ecole Polytechnique, Paris, France}
\email{rida.laraki@polytechnique.edu}
\urladdr{\url{https://sites.google.com/site/ridalaraki/}}

\author[Panayotis Mertikopoulos]{Panayotis Mertikopoulos$^{\ast}$}
\address{French National Center for Scientific Research (CNRS) and Laboratoire d'Informatique de Gre\-no\-ble, Grenoble, France}
\email{panayotis.mertikopoulos@imag.fr}
\urladdr{\url{http://mescal.imag.fr/membres/panayotis.mertikopoulos/home.html}}

\thanks{$^{\ast}$Corresponding author.}
\thanks{\quad
Part of this work was carried out when Panayotis Mertikopoulos was with the Economics Department of the \'Ecole Polytechnique, Paris, France, and we would like to thank the \'Ecole for its hospitality and generous support.
The authors also gratefully acknowledge financial support from the French National Agency for Research (grant ANR-10-BLAN 0112-JEUDY), the GIS ``Sciences de la D\'ecision'', and the P\^ole de Recherche
en Math\'ematiques, Sciences, et Technologies de l'Information et de la Communication under grant no. C-UJF-LACODS MSTIC 2012.
}

\begin{abstract}
Continuous-time dynamics for games are typically first order systems where payoffs determine the growth rate of the players' strategy shares.
In this paper, we investigate what happens beyond first order by viewing payoffs as higher order forces of change, specifying e.g. the acceleration of the players' evolution instead of its velocity (a viewpoint which emerges naturally when it comes to aggregating empirical data of past instances of play).
To that end, we derive a wide class of higher order game dynamics, generalizing first order imitative dynamics, and, in particular, the replicator dynamics.
We show that strictly dominated strategies become extinct in $n$-th order payoff-monotonic dynamics $n$ orders as fast as in the corresponding first order dynamics;
furthermore, in stark contrast to first order, weakly dominated strategies also become extinct for $n\geq2$.
All in all, higher order payoff-monotonic dynamics lead to the elimination of weakly dominated strategies, followed by the iterated deletion of strictly dominated strategies, thus providing a dynamic justification of the well-known epistemic rationalizability process of \cite{DF90}.
Finally, we also establish a higher order analogue of the folk theorem of evolutionary game theory, and we show that convergence to strict equilibria in $n$-th order dynamics is $n$ orders as fast as in first order.
\end{abstract}


\keywords{%
game dynamics,
higher order dynamical systems,
(weakly) dominated strategies,
folk theorem,
learning,
replicator dynamics,
stability of equilibria.%
}

%


\maketitle


\section{Introduction.}
\label{sec.introduction}

Owing to the considerable complexity of computing Nash equilibria and other rationalizable outcomes in non-cooperative games, a fundamental question that arises is whether these outcomes may be regarded as the result of a dynamic learning process where the participants ``accumulate empirical information on the relative advantages of the various pure strategies at their disposal'' \citep[p.~21]{NashThesis}.
To that end, numerous classes of game dynamics have been proposed (from both a learning and an evolutionary ``mass-action'' perspective), each with its own distinct set of traits and characteristics \textendash\ see e.g. the comprehensive survey by \cite{San10} for a most recent account.

Be that as it may, there are few rationality properties that are shared by a decisive majority of game dynamics.
For instance, if we focus on the continuous-time, deterministic regime, a simple comparison between the well-known replicator dynamics \citep{TJ78} and the Smith dynamics \citep{Smi84} reveals that game dynamics can be imitative (replicator) or innovative (Smith),
rest points might properly contain the game's Nash set or coincide with it \citep{HS09},
and strictly dominated strategies might become extinct \citep{SZ92} or instead survive \citep{HS11}.
In fact, negative results seem to be much more ubiquitous:
there is no class of uncoupled game dynamics that always converges to equilibrium \citep{HMC03} and weakly dominated strategies may survive in the long run, even in simple $2\times2$ games \citep{Sam93,Wei95}.

From a mathematical standpoint, the single unifying feature of the vast majority of game dynamics is that they are first order dynamical systems.
Interestingly however, this restriction to first order is not present in the closely related field of optimization (corresponding to games against nature):
as it happens, the second order ``heavy ball with friction'' method studied by \cite{Alv00} and \cite{AGR00} has some remarkable optimization properties that first order schemes do not possess.
In particular, by interpreting the gradient of the function to be maximized as a physical, Newtonian force (and not as a first order vector field to be tracked by the system's trajectories), one can give the system enough energy to escape the basins of attraction of local maxima and converge instead to the \emph{global} maximum of the objective function (something which is not possible in ordinary first order dynamics).

\smallskip

This, therefore, begs the question:
\emph{can second (or higher) order dynamics be introduced and justified in a game theoretic setting?}
And if yes,
\emph{do they allow us to obtain better convergence results and/or escape any of the first order impossibility results?}

\smallskip

The first challenge to overcome here is that second order methods in optimization apply to \emph{unconstrained} problems, whereas game dynamics must respect the (constrained) structure of the game's strategy space.
To circumvent this constraint, \cite{FM04} proposed a heavy-ball method as in \cite{AGR00} above, and they enforced consistency by projecting the orbits' velocity to a subspace of admissible directions when the updating would lead to inadmissible strategy profiles (say, assigning negative probability to an action).
Unfortunately, as is often the case with projection-based schemes (see e.g. \citealp{SDL08}), the resulting dynamics are not continuous, so even basic existence and uniqueness results are hard to obtain.

On the other hand, if players try to improve their performance by aggregating information on the relative payoff differences of their pure strategies, then this cumulative empirical data is not constrained (as mixed strategies are).
Thus, a promising way to obtain a well-behaved second order dynamical system for learning in games is to use the player's accumulated data to define an unconstrained performance measure for each strategy (this is where the dynamics of the process come in), and then map these ``scores'' to mixed strategies by means e.g. of a logit choice model \citep{Rus99,HSV09,MM10,Sor09}.
In other words, the dynamics can first be specified on an unconstrained space, and then mapped to the game's strategy space via the players' choice model.

This use of aggregate performance estimates also has important implications from the point of view of evolutionary game theory and population dynamics.
Indeed, it is well-known that the replicator dynamics arise naturally in populations of myopic agents that evolve based on ``imitation of success'' \citep{Hof95,San10} or on ``imitation driven by dissatisfaction'' \citep{BW96}.
Revision protocols of this kind are invariably steered by the players' \emph{instantaneous} payoffs;
remarkably however, if players are more sophisticated and keep an aggregate (or average) of their payoffs over time, then the same revision rules driven by \emph{long-term} success or dissatisfaction give rise to the same higher order dynamics discussed above.

\subsubsection*{Paper outline.}

After a few preliminaries in Section \ref{sec.prelims}, we make this approach precise in Section \ref{sec.dynamics}, where we derive a higher order variant of the well-known replicator dynamics of \cite{TJ78}.
Regarding the rationality properties of the derived dynamics, we show in Section \ref{sec.dominance} that the higher order replicator dynamics eliminate strictly dominated strategies, including iteratively dominated ones:
in the long run, only iteratively undominated strategies survive.
Qualitatively, this result is the same as its first order counterpart;
quantitatively however, the rate of extinction increases dramatically with the order of the dynamics:
\emph{dominated strategies become extinct in the $n$-th order replicator dynamics $n$ orders as fast as in first order}
(Theorem \ref{thm.dom}).

The reason for this enhanced rate of elimination is that empirical data accrues much faster if a higher order scheme is used rather than a lower order one:
players who use a higher order learning rule end up looking deeper into the past, so they identify consistent payoff differences and annihilate dominated strategies much faster.
As a consequence of the above, in the higher order ($n\geq2$) replicator dynamics, \emph{even weakly dominated strategies become extinct} (Theorem \ref{thm.dom.weak}, a result which comes in stark contrast to the first order setting.
The higher order replicator dynamics thus perform one round of elimination of weakly dominated strategies followed by the iterated elimination of strictly dominated strategies;
from an epistemic point of view, \cite{DF90} showed that the outcome of this deletion process is all that can be expected from rational players who are not certain of their opponents' payoffs, so our result may be regarded as a dynamic justification of this form of rational behavior.

Extending our analysis to equilibrium play, we show in Section \ref{sec.folk} that modulo certain technical modifications, the folk theorem of evolutionary game theory \citep{HS88,Wei95} continues to hold in our higher order setting.
More specifically, we show that:
\begin{inparaenum}[\itshape a\upshape)]
\item
if an interior solution orbit converges, then its limit is Nash;
\item
if a point is Lyapunov stable, then it is also Nash; and
\item
if players start close enough to a strict equilibrium and with a small learning bias, then they converge to it;
conversely, only strict equilibria have this property (Theorem \ref{thm.folk}).
\end{inparaenum}
In fact, echoing our results on the rate of extinction of dominated strategies, we show that the $n$-th order replicator dynamics converge to strict equilibria $n$ orders as fast as in first order.

Finally, in Section \ref{sec.extensions}, we consider a much wider class of higher order dynamics that extends the familiar \emph{imitative dynamics} of \citet{BW96} \textendash\ including all payoff-monotonic dynamics \citep{SZ92} and, in particular, the replicator dynamics.
The results that we described above go through essentially unchanged for all higher order payoff-monotonic dynamics, with one notable trait standing out:
the property that only pure strategy profiles can be attracting holds in \emph{all} higher order imitative dynamics for $n\geq2$, and not only for the $n$-th order replicator dynamics.
As with the elimination of weakly dominated strategies, this is not the case in first order:
for instance, the payoff-adjusted replicator dynamics of Maynard Smith exhibit interior attractors even in simple $2\times2$ games (see e.g. Ex.~5.3 in \citealp{Wei95}).
\section{Notation and preliminaries.}
\label{sec.prelims}

\subsection{Notational conventions.}
\label{sec.notation}

If $\set=\{s_{\alpha}\}_{\alpha=0}^{n}$ is a finite set, the vector space spanned by $\set$ over $\R$ will be the set $\R^{\set}$ of all maps $x\from\set\to\R$, $s\in\set\mapsto x_{s}\in\R$.
The canonical basis $\{e_{s}\}_{s\in\set}$ of this space consists of the indicator functions $e_{s}\from\set\to\R$ which take the value $e_{s}(s)=1$ on $s$ and vanish otherwise, so, thanks to the identification $s\mapsto e_{s}$, we will not distinguish between $s\in\set$ and the corresponding basis vector $e_{s}$ of $\R^{\set}$.
In the same spirit,
we will use the index $\alpha$ to refer interchangeably to either $s_{\alpha}$ or $e_{\alpha}$ (writing e.g. $x_{\alpha}$ instead of $x_{s_{\alpha}}$);
likewise, if $\{\set_{k}\}_{k\in\mathcal{K}}$ is a finite family of finite sets indexed by $k\in\mathcal{K}$, we will write $(\alpha;\alpha_{-k})$ for the tuple $(\alpha_{0},\dotsc,\alpha_{k-1},\alpha,\alpha_{k+1},\dotsc) \in\prod_{k}\set_{k}$ and $\sum_{\alpha}^{k}$ in place of $\sum_{\alpha\in\set_{k}}$.

We will also identify the set $\simplex(\set)$ of probability measures on $\set$ with the $n$-di\-men\-sio\-nal simplex of $\R^{\set}$: $\simplex(\set) \equiv \{x\in \R^{\set}: \sum_{\alpha} x_{\alpha} =1 \text{ and }x_{\alpha}\geq 0\}$.
Finally, regarding players and their actions, we will follow the original convention of Nash and employ Latin indices ($j,k,\dotsc$) for players, while keeping Greek ones ($\alpha,\beta,\dotsc$) for their actions (pure strategies); also, unless otherwise mentioned, we will use $\alpha,\beta,\dotsc$, for indices that start at $0$, and $\mu,\nu,\dotsc$, for those which start at 1.

\subsection{Finite games.}

A \emph{finite game in normal form} will comprise a finite set of \emph{players} $\play = \{1,\dotsc,N\}$, each with a finite set of \emph{actions} (or \emph{pure strategies}) $\act_{k} = \{\alpha_{k,0}, \alpha_{k,1},\dotsc\}$ that can be mixed by means of a probability distribution (\emph{mixed strategy} $x_{k} \in\simplex(\act_{k})$.
The set $\simplex(\act_{k})$ of a player's mixed strategies will be denoted by $\strat_{k}$, and aggregating over all players, the space of \emph{strategy profiles} $x = (x_{1}, \dotsc, x_{N})\in\prod_{k}\R^{\act_{k}}$ will be the product $\strat\equiv\inprod_{k} \strat_{k}$;
in this way, if $\act = \coprod_{k}\act_{k}$ denotes the (disjoint) union of the players' action sets, $\strat$ may be seen as a product of simplices embedded in $\R^{\act} \cong \prod_{k} \R^{\act_{k}}$.

As is customary, when we wish to focus on the strategy of a specific (focal) player $k\in\play$ versus that of his \emph{opponents} $\play_{-k} \equiv \play \exclude \{k\}$, we will use the shorthand $(x_{k};x_{-k}) \equiv (x_{1}, \dotsc, x_{k}, \dotsc, x_{N})\in \strat$ to denote the strategy profile where player $k$ plays $x_{k}\in\strat_{k}$ against the strategy $x_{-k}\in\strat_{-k} \equiv \prod_{\ell\neq k} \strat_{\ell}$ of his opponents.
The players' (expected) rewards are then prescribed by the game's \emph{payoff} (or \emph{utility} \emph{functions} $u_{k}\from\strat\to\R$:
\begin{equation}
\label{eq.payoff}
u_{k}(x) = \insum_{\alpha_{1}}^{1}\dotsi \insum_{\alpha_{N}}^{N} u_{k} (\alpha_{1},\dotsc,\alpha_{N})\,
x_{1,\alpha_{1}} \!\dotsm\, x_{N,\alpha_{N}},
\end{equation}
where $u_{k} (\alpha_{1},\dotsc,\alpha_{N})$ denotes the reward of player $k$ in the profile $(\alpha_{1},\dotsc,\alpha_{N})\in\prod_{k}\act_{k}$;
specifically, if player $k$ plays $\alpha\in\act_{k}$, we will use the notation:
\begin{equation}
\label{eq.actpayoff}
u_{k\alpha}(x)
\equiv u_{k}(\alpha;x_{-k})
= u_{k}(x_{1},\dotsc, \alpha, \dotsc, x_{N}).
\end{equation}

In light of the above, a game in normal form with players $k\in\play$, action sets $\act_{k}$ and payoff functions $u_{k}\from\strat\to\R$ will be denoted by $\game\equiv\game(\play,\act,u)$.
A \emph{restriction} $\game'$ of $\game$ (denoted $\game'\leq\game$) will then be a game $\game'\equiv\game'(\play,\act',u')$ played by the players of $\game$, each with a subset $\act_{k}'\subseteq\act_{k}$ of their original actions, and with payoff functions $u_{k}'\equiv u_{k}|_{\strat'}$ suitably restricted to the reduced strategy space $\strat' = \prod_{k}\simplex(\act_{k}')$ of $\game'$.

Given a game $\game\equiv\game(\play,\act,u)$, we will say that the pure strategy $\alpha\in\act_{k}$ is (\emph{strictly}) \emph{dominated} by $\beta\in\act_{k}$ (and we will write $\alpha\prec\beta$) when
\begin{equation}
\label{eq.dominated.pure}
u_{k\alpha}(x) < u_{k\beta}(x)\quad
\text{for all strategy profiles $x\in\strat$.}
\end{equation}
More generally, we will say that \emph{$q_{k}\in\strat_{k}$ is dominated by $q_{k}'\in\strat_{k}$} if
\begin{equation}
\label{eq.dominated.mixed}
u_{k}(q_{k};x_{-k}) < u_{k}(q_{k}';x_{-k})\;
\text{ for all strategies $x_{-k}\in\strat_{-k}$ of $k$'s opponents.}
\end{equation}
Finally, if the above inequalities are only strict for some (but \emph{not all}) $x\in\strat$, then we will employ the term \emph{weakly dominated} and write $q_{k}\preccurlyeq q_{k}'$ instead.

Of course, by removing dominated (and, thus, rationally unjustifiable) strategies from a game $\game$, other strategies might become dominated in the resulting restriction of $\game$, leading inductively to the notion of \emph{iteratively dominated strategies}:
specifically, if a strategy survives all rounds of elimination, then it will be called \emph{iteratively undominated}, and if the space $\strat^{\infty}$ of iteratively undominated strategies is a singleton, the game $\game$ will be called \emph{dominance-solvable}.

On the other hand, when a game cannot be solved by removing dominated strategies, we will turn to the equilibrium concept of Nash which characterizes profiles that are resilient against unilateral deviations;
formally, we will say that $q\in\strat$ is a \emph{Nash equilibrium} of $\game$ if
\begin{equation}
\label{eq.Nash}
u_{k} (x_{k};q_{-k}) \leq u_{k}(q)\quad
\text{for all $x_{k}\in\strat_{k}$ and for all $k\in\play$.}
\end{equation}
If \eqref{eq.Nash} is strict for all $x_{k}\in\strat_{k}\exclude\{q_{k}\}$, $k\in\play$, $q$ itself will be called \emph{strict};
finally, equilibria of restrictions $\game'$ of $\game$ will be called \emph{restricted equilibria} of $\game$.

\subsection{Dynamical systems.}
\label{sec.prelims.dynamics}

Following \cite{Lee03}, a \emph{flow} on $\strat$ will be a smooth map $\flow\from \strat\times\R_{+}\to\strat$ such that
\begin{inparaenum}[\itshape a\upshape)]
\item
$\flow(x,0) = x$ for all $x\in \strat$;
and
\item
$\flow(\flow(x,t),s) = \flow(x,t+s)$ for all $x\in \strat$ and for all $s,t\geq 0$.
\end{inparaenum}
The curve $\flow^{x}\from\R_{+}\to\strat$, $t\mapsto \flow(x,t)$, will be called the \emph{orbit} (or \emph{trajectory}) of $x$ under $\flow$, and when there is no danger of confusion, $\flow^{x}(t)$ will be denoted more simply by $x(t)$.
In this way, $\flow$ induces a vector field $V$ on $\strat$ via the mapping $x\mapsto V(x) \equiv \dot x(0) \in \cone_{x}\strat$ where $\dot x(0)$ is the initial velocity of $x(t)$ and $\cone_{x}\strat$ denotes the tangent cone to $\strat$ at $x$, viz.:
\begin{equation}
\label{eq.tangent}
\cone_{x}\strat\equiv
	\{z\in\R^{\act}: \insum_{\alpha}^{k} z_{k\alpha} = 0 \text{ for all $k\in\play$ and } z_{k\alpha} \geq0 \text{ if $x_{k\alpha}=0$}\}.
\end{equation}
By the fundamental theorem on flows, $x(t)$ will be the unique solution to the (first order) \emph{dynamical system} $\dot x(t) = V(x(t))$, $t\geq0$.
Accordingly, we will say that $q\in\strat$ is:
\begin{itemize}
\setlength{\itemsep}{2pt}
\setlength{\parskip}{1pt}
\item
\emph{stationary} if $V(q) = 0$ (i.e. if $q(t) \equiv \flow(q,t) = q$ for all $t\geq0$).
\item
\emph{Lyapunov stable} if, for every neighborhood $U$ of $q$, there exists a neighborhood $V$ of $q$ such that $x(t)\in U$ for all $x\in V$, $t\geq0$.
\item
\emph{attracting} if $x(t)\to q$ for all $x$ in a neighborhood $U$ of $q$ in $\strat$.
\item
\emph{asymptotically stable} if it is Lyapunov stable and attracting.
\end{itemize}

Higher order dynamics of the form ``$x^{(n)} = V$'' are defined via the recursive formulation:
\begin{equation}
\begin{aligned}
\label{eq.ndyn}
\dot x(t)	&= x^{1}(t)\\
\dot x^{1}(t)	&= x^{2}(t)\\
			&\dotso\\
\dot x^{n-1}(t) &= V(x(t),x^{1}(t),\dotsc,x^{n-1}(t)).
\end{aligned}
\end{equation}
An $n$-th order dynamical system on $\strat$ will thus correspond to a flow on the \emph{phase space} $\phase = \coprod_{x} (\cone_{x}\strat)^{n-1}$ whose points ($n$-tuples of the form $(x,x^{1},\dotsc,x^{n-1})$ as above) represent all possible \emph{states} of the system;%
\footnote{By convention, we let $\left(\cone_{x}\strat\right)^{0} = \{0\}$, so $\phase = \coprod_{x} \{0\} \cong \strat$ for $n=1$.}
by contrast, we will keep the designation ``points'' for base points $x\in \strat$, and $\strat$ itself will be called the \emph{configuration space} of the system.
Obviously, the evolution of an $n$-th order dynamical system depends on the entire \emph{initial state} $\omega=(x(0), \dot x(0),\dotsc, x^{(n-1)}(0))\in\phase$ and not only on $x(0)$, so stationarity and stability definitions will be phrased in terms of states $\omega\in\phase$.
On the other hand, if we wish to characterize the evolution of an initial \emph{position} $x(0)\in\strat$ over time, we will do so by means of the corresponding \emph{rest state} $(x(0), 0,\dotsc,0)$ which signifies that the system starts \emph{at rest}:
using the natural embedding $x\mapsto(x,0\dotsc,0) \in\phase$, we may thus view $\strat$ as a subset of $\phase$,
and when there is no danger of confusion, we will identify $x\in\strat$ with the associated rest state $(x,0,\dotsc,0)\in\phase$.

\section{Derivation of higher order dynamics.}
\label{sec.dynamics}

A fundamental requirement for any class of game dynamics is that solution trajectories must remain in the game's strategy space $\strat$ for all time.
For a first order system of the form
\begin{equation}
\label{eq.1dyn}
\dot x_{k\alpha} = F_{k\alpha}(x),
\end{equation}
with Lipschitz $F_{k\alpha}\from\R^{\act}\to\R$, this is guaranteed by the tangency requirements
\begin{inparaenum}[\itshape a\upshape)]
\item $\insum_{\alpha}^{k} F_{k\alpha} = 0$ for all $k\in\play$, and
\item $F_{k\alpha} \geq 0 $ whenever $x_{k\alpha} = 0$.
\end{inparaenum}
In second order however, this does not suffice:
if we simply replace $\dot x_{k\alpha}$ with $\ddot x_{k\alpha}$ in \eqref{eq.1dyn} and players start with sufficiently high velocity $\dot x(0)$ pointing towards the exterior of $\strat$, then they will escape $\strat$ in finite time.

\cite{FM04} forced solutions to remain in $\strat$ by exogenously projecting the velocity $v(t) \equiv \dot x(t)$ of an orbit to the tangent cone $\cone_{x}\strat$ of ``admissible'' velocity vectors.
This approach however has the problem that projections do not vary continuously with $x$, so existence and (especially) uniqueness of solutions might fail;
moreover, players need to know exactly when they hit a boundary face of $\strat$ in order to change their projection operator, so machine precision errors are bound to arise \citep{Can00}.
To circumvent these problems, we will take an approach rooted in reinforcement learning, allowing us to respect the restrictions imposed by the simplicial structure of $\strat$ in a natural way.

\subsection{The second order replicator dynamics in dyadic games.}
\label{sec.2RD.2st}

We will first describe our higher order reinforcement learning approach in the simpler context of two-strategy games, the main idea being that players keep and update an \emph{unconstrained} measure of their strategies' payoff differences instead of updating their (constrained) strategies directly.
In this way, second (or higher) order effects arise naturally when players look two (or more) steps into the past, and it is the dynamics of these ``scores'' that induce a well-behaved dynamical system on the game's strategy space.

More precisely, consider an $n$-person game where every player $k\in\play$ has two possible actions, ``$0$'' and ``$1$'', that are appraised based on the associated payoff differences $\Delta u_{k} \equiv u_{k,1} - u_{k,0}$, $k\in\play$.
With this regret-like information at hand, players can measure the performance of their strategies over time by updating the auxiliary \emph{score variables} (or \emph{propensities}):
\begin{equation}
\label{eq.score.2st}
U_{k}(t+h) = U_{k}(t) + h \Delta u_{k}(x(t)),
\end{equation}
where $h$ is the time interval between updates and $\Delta u_{k}(x(t))$ represents the payoff difference between actions ``1'' and ``0'' at time $t$ (assumed discrete for the moment).
The players' strategies $x_{k}\in\strat_{k}$ are then updated following the \emph{logit} (or \emph{exponential weight}) choice model whereby actions that score higher are played exponentially more often \citep{Rus99,HSV09,MM10,Sor09}:
\begin{equation}
\label{eq.2logit}
x_{k}(t) = \frac{\exp(U_{k}(t))}{1+\exp(U_{k}(t))}.
\end{equation}

This process is repeated indefinitely, so, for simplicity, we will descend to continuous time by letting $h\to0$ in \eqref{eq.2score.2st}.%
\footnote{We should stress that the passage to continuous time is done here at a heuristic level
\textendash\ see \cite{Rus99} and \cite{Sor09} for some of the discretization issues that arise.
This discretization is a very important topic in itself, but since our focus is the properties of the underlying continuous-time dynamics, we will not address it here.}
In this way, by collecting terms in the LHS of \eqref{eq.score.2st} and replacing the discrete difference ratio $(U_{k}(t+h) - U_{k}(t))/h$ with $\dot U_{k}$, the system of \eqref{eq.score.2st} and \eqref{eq.2logit} becomes:
\begin{subequations}
\label{eq.1XL}
\begin{flalign}
\label{eq.score.2st.cont}
\dot U_{k}
	&= \Delta u_{k}(x)\\
\label{eq.logit.2st.cont}
x_{k}
	&= \left(1+\exp(-U_{k})\right)^{-1}.
\end{flalign}
\end{subequations}
Hence, by differentiating \eqref{eq.logit.2st.cont} to decouple it from \eqref{eq.score.2st.cont}, we readily obtain the $2$-strategy replicator dynamics of \cite{TJ78}:
\begin{equation}
\dot x_{k}
	= \frac{dx_{k}}{dU_{k}} \dot U_{k}
	= x_{k} (1-x_{k})\,\Delta u_{k}(x).
\end{equation}

In this well-known derivation of the replicator dynamics from the exponential re\-in\-for\-cement rule \eqref{eq.1XL} (see also \citealp{HSV09} and \citealp{MM10}), the constraints $x_{k}\in(0,1)$, $k\in\play$, are automatically satisfied thanks to \eqref{eq.2logit}.
On the downside however, \eqref{eq.score.2st.cont} itself ``forgets'' a lot of past (and potentially useful) information because the ``discrete-time'' recursion \eqref{eq.score.2st} only looks one iteration in the past.
To remedy this, players could take \eqref{eq.score.2st} one step further by aggregating the scores $U_{k}$ themselves so as to gather more momentum towards the strategies that tend to perform better.

This reasoning yields the double-aggregation re\-in\-for\-cement scheme:
\begin{subequations}
\label{eq.2score.2st}
\begin{flalign}
U_{k}(t+h)		&= U_{k}(t) + h \Delta u_{k}(x(t))\\
Z_{k}(t+h)		&= Z_{k}(t) + h U_{k}(t),
\end{flalign}
\end{subequations}
where, as before, the profile $x(t)$ is updated following the logistic distribution \eqref{eq.2logit} applied to the double aggregate $Z$, viz. $x_{k}(t) = 1/(1 + \exp(-Z_{k}(t)))$.%
\footnote{Of course, players could look even deeper into the past by taking further aggregates in \eqref{eq.2score.2st}, but we will not deal with this issue here in order to keep our example as simple as possible.}
Thus, by eliminating the intermediate (first order) aggregation variables $U_{k}$ from \eqref{eq.2score.2st}, we obtain the \emph{second order} recursion:
\begin{equation}
\frac{Z_{k}(t+2h) - Z_{k}(t+h)}{h} = \frac{Z_{k}(t+h) - Z_{k}(t)}{h} + h \Delta u_{k}(x(t)),
\end{equation}
which in turn leads to the continuous-time variant:
\begin{subequations}
\label{eq.2XL}
\begin{flalign}
\ddot Z_{k}
	&= \Delta u_{k}(x)\\
\label{eq.2logit.2st.cont}
x_{k}	
	&= \left(1+\exp(-Z_{k})\right)^{-1}.
\end{flalign}
\end{subequations}

The second order system \eqref{eq.2XL} automatically respects the simplicial structure of $\strat$ by virtue of the logistic updating rule \eqref{eq.2logit}, so this overcomes the hurdle of staying in $\strat$;
still, it is quite instructive to also derive the dynamics of the strategy profile $x(t)$ itself.
To that end, \eqref{eq.2logit.2st.cont} gives $Z_{k} = \log (x_{k}) - \log(1-x_{k})$, so a simple differentiation yields:
\begin{equation}
\label{eq.xzdep}
\dot Z_{k}
= \frac{\dot x_{k}}{x_{k}} + \frac{\dot x_{k}}{1-x_{k}}
= \frac{\dot x_{k}}{x_{k}(1-x_{k})}.
\end{equation}
Differentiating yet again, we thus obtain
\begin{equation}
\ddot Z_{k} = \frac{\ddot x_{k} x_{k} (1-x_{k}) - \dot x_{k}^{2} (1-x_{k}) + \dot x_{k}^{2} x_{k}}{x_{k}^{2}(1-x_{k})^{2}},
\end{equation}
and some algebra readily yields the \emph{second order replicator dynamics for dyadic games}:
\begin{equation}
\label{eq.2RD.2st}
\ddot x_{k} = x_{k} (1-x_{k}) \Delta u_{k} + \frac{1-2x_{k}}{x_{k}(1-x_{k})} \dot x_{k}^{2}.
\end{equation}

This derivation of a second order dynamical system on $\strat$ will be the archetype for the significantly more general class of higher order dynamics of the next section, so we will pause here for some remarks:

\begin{remark}[Initial Conditions]
In the first order exponential learning scheme \eqref{eq.1XL}, the players' initial scores $U_{k}(0)$ determine their initial strategies via the logit rule \eqref{eq.logit.2st.cont}, namely $x_{k}(0) = (1+\exp(-U_{k}(0)))^{-1}$.
The situation however is less straightforward in \eqref{eq.2XL} where we have two different types of initial conditions:
the players' initial scores $Z_{k}(0)$ and the associated initial velocities $\dot Z_{k}(0)$ (themselves corresponding to the initial values of the intermediate aggregation variables $U_{k}$ in \eqref{eq.2score.2st}).

In second order, the initial scores $Z_{k}(0)$ determine the players' initial strategies via \eqref{eq.2logit.2st.cont}.
On the other hand, the scores' initial velocities $\dot Z_{k}(0) = U_{k}(0)$ play a somewhat more convoluted role:
indeed, differentiating \eqref{eq.2logit.2st.cont} yields
\begin{equation}
\dot x_{k}(0)
	= \dot Z_{k}(0) \frac{\exp(Z_{k}(0))}{1+\exp(Z_{k}(0))} \frac{1- \exp(Z_{k}(0))}{1+\exp(Z_{k}(0))}
	= x_{k}(0) (1-x_{k}(0)) \dot Z_{k}(0),
\end{equation}
so the initial score velocities $\dot Z_{k}(0)$ control the initial growth rate $\dot x_{k}(0)$ of the players' strategies.
As we shall see in the following, nonzero $\dot Z_{k}(0)$ introduce an inherent bias in a players' learning scheme;
hence, given that $\dot x_{k}(0)=0$ when $\dot Z_{k}(0) = 0$ (independently of the players' initial strategy), starting ``at rest'' ($\dot x_{k}(0) = 0$) is basically equivalent to learning with no initial bias ($\dot Z_{k}(0) = 0$).
\end{remark}

\begin{remark}[Past information]
The precise sense in which the double aggregation scheme \eqref{eq.2XL} is ``looking deeper into the past'' can be understood more clearly by writing out explicitly the first and second order scores $U_{k}$ and $Z_{k}$ as follows:
\begin{subequations}
\label{eq.score.int.2st}
\begin{flalign}
\label{eq.1score.int}
U_{k}(t)	&= U_{k}(0)
		+ \int_{0}^{t}\Delta u_{k}(x(s)) \id s,\\
\label{eq.2score.int}
Z_{k}(t)	&= Z_{k}(0)
		+ \int_{0}^{t} U_{k}(s) \id s
		= Z_{k}(0)
		+ \dot Z_{k}(0) t
		+ \int_{0}^{t} (t-s)\,\Delta u_{k}(x(s)) \id s.
\end{flalign}
\end{subequations}

We thus see that the first order aggregate scores $U_{k}$ assign uniform weight to all past instances of play, while the second order aggregates $Z_{k}$ put (linearly) more weight on instances that are farther removed into the past.
This mode of weighing can be interpreted as players being reluctant to forget what has occurred, and this is precisely the reason that we describe the second order scheme \eqref{eq.2XL} as ``looking deeper into the past''.
From the point of view of learning, this may appear counter-intuitive because past information is usually discounted (e.g. by an exponential factor) and ultimately discarded in favor of more recent observations \citep{FL98}.
As we shall see, ``refreshing'' observations in this way results in the players' scores $U_{k}$ growing at most linearly in time (see e.g. \citealp{HSV09}, \citealp{Rus99}, and \citealp{Sor09});
on the flip side, if players \emph{reinforce} past observations by using \eqref{eq.2score.int} in place of \eqref{eq.1score.int}, then their scores may grow quadratically instead of linearly.

From \eqref{eq.score.int.2st} we also see that nonzero initial score velocities $\dot Z_{k}(0) \neq 0$ introduce a skew in the players' learning scheme:
for instance, in a constant game ($\Delta u_{k} \equiv 0$), the integral expression \eqref{eq.2score.int} gives $\lim_{t\to\infty} Z_{k}(t) = \sign(Z_{k}(0)) \cdot \infty$, i.e. $x_{k}(t)$ will converge to $0$ or $1$, depending only on the sign of $\dot Z_{k}(0)$.
Put differently, the bias introduced by the initial velocities $\dot Z_{k}(0)$ is not static (as the players' choice of initial strategy/score), but instead drives the player to a particular direction, even in the absence of external stimuli.

\end{remark}

\subsection{Reinforcement learning and higher order dynamics.}
\label{sec.XL}

In the general case, we will consider the following reinforcement learning setup:
\begin{enumerate}
\setlength{\itemsep}{1pt}
\setlength{\parskip}{1pt}
\item
For every action $\alpha\in\act_{k}$, player $k$ keeps and updates a \emph{score} (or \emph{propensity}) variable $y_{k\alpha}\in\R$ which measures the performance of $\alpha$ over time.

\item
Players transform the scores $y_{k}\in\R^{\act_{k}}$ into mixed strategies $x_{k}\in\strat_{k}$ by means of the Gibbs map $G_{k}\from\R^{\act_{k}}\to \strat_{k}$, $y_{k}\mapsto G_{k}(y_{k})$:
\begin{equation}
\label{eq.Gibbs}
\tag{GM}
x_{k\alpha}
= G_{k\alpha}(y_{k})
\equiv\frac
{\exp\left(\lambda_{k} y_{k\alpha}(t)\right)}
{\sum_{\beta}^{k} \exp\left(\lambda_{k} y_{k\beta}(t)\right)},
\end{equation}
where the ``inverse temperature'' $\lambda_{k}>0$ controls the model's sensitivity to external stimuli \citep{LL76}.

\item
The game is played and players record the payoffs $u_{k\alpha}(x)$ for each $\alpha\in\act_{k}$.

\item
Players update their scores and the process is repeated ad infinitum.
\end{enumerate}

Needless to say, the focal point of this learning process is the exact way in which players update the performance scores $y_{k\alpha}\in\R$ at each iteration of the game.
In the previous section, these scores were essentially defined as double aggregates of the received payoffs via the two-step process \eqref{eq.2score.2st}.
Here, we will further extend this framework by considering an $n$-fold aggregation scheme in which the scores $y_{k\alpha}$ are formed via the following $n$-step process:
\begin{equation}
\label{eq.nLD.nsteps}
\begin{aligned}
Y_{k\alpha}^{(n-1)}(t+h)	&= Y_{k\alpha}^{(n-1)}(t) + h u_{k\alpha}(x(t)),\\
Y_{k\alpha}^{(n-2)}(t+h)	&= Y_{k\alpha}^{(n-2)}(t) + hY_{k\alpha}^{(n-1)}(t)\\
					&\dotso\\
Y_{k\alpha}^{(1)}(t+h)	&= Y_{k\alpha}^{(1)}(t) + h Y_{k\alpha}^{(2)}(t),\\
y_{k\alpha}(t+h)		&= y_{k\alpha}(t) + h Y_{k\alpha}^{(1)}(t).
\end{aligned}
\end{equation}
In other words, at each update period, players first aggregate their payoffs by updating the first order aggregation variables $Y_{k\alpha}^{(n-1)}$;
they then re-aggregate these intermediate variables by updating the second order aggregation scores $Y_{k\alpha}^{(n-2)}$ above, and repeat this step up to $n$ levels, leading to the $n$-fold aggregation score $y_{k\alpha}$.

Similarly to the analysis of the previous section, if we eliminate the intermediate aggregation variables $Y^{(1)}$, $Y^{(2)}$ and so forth, we readily obtain the straightforward $n$-th order recursion:
\begin{equation}
\label{eq.nLD.discrete}
\frac{\Delta_{h}^{(n)} y_{k\alpha}(t)}{h^{n}} = u_{k\alpha}(x(t)),
\end{equation}
where $\Delta_{h}^{(n)} y_{k\alpha}$ denotes the $n$-th order finite difference of $y_{k\alpha}$, defined inductively as $\Delta_{h}^{(n)} y(t) = \Delta_{h}^{(n-1)} y(t+h) - \Delta_{h}^{(n-1)} y(t)$, with $\Delta_{h}^{(1)} y(t) = y(t+h) - y(t)$.%
\footnote{In fact, we have $Y_{k\alpha}^{(r)}(t) = \Delta_{h}^{(r)} y_{k\alpha}(t) \big/ h^{r}$ for all $r=1,\dotsc, n-1$, explaining our choice of notation.}
Thus, if we descend to continuous time by letting $h\to0$, we obtain the \emph{$n$-th order learning dynamics}:
\begin{equation}
\label{eq.nLD}
\tag{LD$_{n}$}
y_{k\alpha}^{(n)}(t) = u_{k\alpha}(x(t)),
\end{equation}
with $x(t)$ given by the Gibbs map \eqref{eq.Gibbs} applied to $y(t)$.

\smallskip

The learning dynamics \eqref{eq.nLD} together with the logit choice model \eqref{eq.Gibbs} completely specify the evolution of the players' mixed strategy profile $x(t)$ and will thus constitute the core of our considerations.
However, it will also be important to derive the associated higher order dynamics induced by \eqref{eq.nLD} on the players' strategy space $\strat$;
to that end, we begin with the identity
\begin{equation}
\label{eq.logdiff}
\log(x_{k\alpha}) - \log(x_{k\beta})
= \lambda_{k}(y_{k\alpha} - y_{k\beta}),
\end{equation}
itself an easy consequence of \eqref{eq.Gibbs}.
By Fa\`a di Bruno's higher order chain rule \citep{Fra78}, we then obtain
\begin{equation}
\label{eq.nchain}
\ddt[n] \log(x_{k\alpha}(t))
= \sum\frac{n!}{m_{1}!\dotsm m_{n}!} \frac{(-1)^{m-1}(m-1)!}{x_{k\alpha}^{m}}
\inprod_{r=1}^{n} \left(x_{k\alpha}^{(r)}(t)\big/r!\right)^{m_{r}},
\end{equation}
where $m = m_{1}+\dotsm+m_{n}$, and the sum is taken over all non-negative integers $m_{1},\dotsc,m_{n}$ such that $\sum_{r=1}^{n} r m_{r} = n$.
In particular, since the only term that contains $x_{k\alpha}^{(n)}$ has $m_{1} = m_{2} = \dotso = m_{n-1} = 0$ and $m_{n}=1$, we may rewrite \eqref{eq.nchain} as
\begin{equation}
\label{eq.nderx}
\ddt[n] \log(x_{k\alpha}(t))
= \frac{x_{k\alpha}^{(n)}(t)}{x_{k\alpha}(t)}
+ R_{k\alpha}^{n-1}\big(x(t),\dot x(t),\dotsc, x^{(n-1)}(t)\big),
\end{equation}
where $R_{k\alpha}^{n-1}$ denotes the $(n-1)$-th order remainder of the RHS of \eqref{eq.nchain}:
\begin{equation}
\label{eq.adjustment}
R_{k\alpha}^{n-1}\left(x,\dot x,\dotsc,x^{(n-1)}\right)
= \sum\frac{(-1)^{m-1} n!}{m_{1}!\dotsm m_{n-1}!} \frac{(m-1)!}{x_{k\alpha}^{m}}
\prod_{r=1}^{n-1} \left(x_{k\alpha}^{(r)}(t)\big/r!\right)^{m_{r}}.
\end{equation}

By taking the $n$-th derivative of \eqref{eq.logdiff} and substituting, we thus get
\begin{equation}
\label{eq.pre.nRD}
\lambda_{k}\left(u_{k\alpha} - u_{k\beta}\right)
= \frac{x_{k\alpha}^{(n)}}{x_{k\alpha}} - \frac{x_{k\beta}^{(n)}}{x_{k\beta}}
+ R_{k\alpha}^{n-1} - R_{k\beta}^{n-1},
\end{equation}
so, by multiplying both sides with $x_{k\beta}$ and summing over $\beta\in\act_{k}$ (recall that $\sum_{\beta}^{k} x_{k\beta}^{(n)}=0$), we finally obtain the \emph{$n$-th order (asymmetric) replicator dynamics}:
\begin{equation}
\label{eq.nRD}
\tag{RD$_{n}$}
x_{k\alpha}^{(n)}
= \lambda_{k} x_{k\alpha} \left(u_{k\alpha} - u_{k}\right)
- x_{k\alpha} \left(R_{k\alpha}^{n-1} - \insum_{\beta}^{k} x_{k\beta} R_{k\beta}^{n-1}\right).
\end{equation}

\smallskip

The higher order replicator equation \eqref{eq.nRD} above will be the chief focus of our paper;
as such, a few remarks are in order:

\setcounter{remark}{0}

\begin{remark}
As one would expect, for $n=1$, we trivially obtain $R^{0}_{k\alpha} = 0$ for all $\alpha\in\act_{k}$, $k\in\play$, so \eqref{eq.nRD} reduces to the standard (asymmetric) replicator dynamics of \cite{TJ78}:
\begin{equation}
\label{eq.1RD}
\tag{RD$_{1}$}
\dot x_{k\alpha}
= \lambda_{k} x_{k\alpha} \left(u_{k\alpha}(x) - u_{k}(x)\right).
\end{equation}
On the other hand, for $n=2$, the only lower order term that survives in \eqref{eq.adjustment} is for $m_{1}=2$;
a bit of algebra then yields the second order replicator equation:
\begin{equation}
\label{eq.2RD}
\tag{RD$_{2}$}
\txs
\ddot x_{k\alpha}
= \lambda_{k} x_{k\alpha} \left(u_{k\alpha}(x) - u_{k}(x)\right)
+ x_{k\alpha} \left(\dot x_{k\alpha}^{2} \big/ x_{k\alpha}^{2} - \insum^{k}_{\beta} \dot x_{k\beta}^{2} \big/ x_{k\beta}\right).
\end{equation}
At first glance, the above equation seems different from the dynamics \eqref{eq.2RD.2st} that we derived in Section \ref{sec.2RD.2st}, but this is just a matter of reordering: if we restrict \eqref{eq.2RD} to two strategies, ``0'' and ``1'', and set $x_{k} \equiv x_{k,1} = 1 - x_{k,0}$, we will have $\dot x_{k} = \dot x_{k,1} = -\dot x_{k,0}$, and \eqref{eq.2RD.2st} follows immediately.
\end{remark}

\begin{remark}
In terms of structure, \eqref{eq.nRD} consists of a replicator-like term (driven by the game's payoffs) and a game-independent adjustment $R_{k\alpha}^{n-1}$ which reflects the higher order character of \eqref{eq.nRD}.
As noted by the associate editor (whom we thank for this remark), if we put the order of the dynamics aside, there is a structural similarity between the higher order replicator dynamics \eqref{eq.nRD}, the replicator dynamics with aggregate shocks of \cite{FH92}, and the stochastic replicator dynamics of exponential learning \citep{MM10}.
The reason for this similarity is that all these models are first defined in terms of an auxiliary set of variables:
absolute population sizes in \cite{FH92} and payoff scores here and in \cite{MM10}.
Differentiation of these variables with respect to time then always yields a replicator-like term carrying the game's payoffs, plus a correction term which is independent of the game being played (because it is coming from It\^o calculus or higher order considerations).
\end{remark}

\begin{remark}
\label{rem.invariance}
Technically, given that the higher order adjustment terms $R_{k\alpha}^{n-1}$ blow up for $x_{k\alpha}=0$ and $n>1$, the dynamics \eqref{eq.nRD} are only defined for strategies that lie in the (relative) interior $\mathrm{rel\,int\,}(\strat)$ of $\strat$.
If the players' initial strategy profile is itself interior, then this poses no problem to \eqref{eq.nRD} because the Gibbs map \eqref{eq.Gibbs} ensures that every strategy share $x_{k\alpha}(t)$ will remain positive for all time.
For the most part, we will not need to consider non-interior orbits;
nonetheless, if required, we can consider initial conditions on any subface $\strat'$ of $\strat$ simply by restricting \eqref{eq.Gibbs} to the corresponding restriction $\game'$ of $\game$, i.e. by effectively setting the score of an action that is not present in the initial strategy distribution to $-\infty$.
In this manner,
we may extend \eqref{eq.nRD} to any subface of $\strat$, and it is in this sense that we will interpret \eqref{eq.nRD} for non-interior initial conditions.
\end{remark}

\begin{remark}
In the same way that we derived the integral expressions \eqref{eq.score.int.2st} for the payoff scores, we obtain the following integral representation for the higher order learning dynamics \eqref{eq.nLD}:
\begin{equation}
\label{eq.nscore}
y_{k\alpha}(t)
	= \frac{1}{(n-1)!} \int_{0}^{t} (t-s)^{n-1} u_{k\alpha}(x(s)) \dd s
	+ \sum_{r=0}^{n-1} y_{k\alpha}^{(r)}(0) \frac{t^{r}}{r!}.
\end{equation}
As with our previous discussion in Section \ref{sec.2RD.2st}, the players' initial scores $y_{k\alpha}(0)$ determine the players' initial strategies.
Similarly, the higher order initial conditions $y_{k\alpha}^{(r)}\neq0$, $r\geq 1$, control the initial derivates $\dot x_{k\alpha}(0),\dotsc$ of \eqref{eq.nRD}, and it is easy to see that starting ``at rest'' is equivalent to having no initial learning bias that could lead players to a particular strategy in the absence of external stimuli (e.g. in a constant game; see also the concluding remarks of Section \ref{sec.2RD.2st}).
%
%
\end{remark}

\subsection{Evolutionary interpretations of the higher order replicator dynamics.}
\label{sec.evolution}

In the mass-action interpretation of evolutionary game theory, it is assumed that there is a nonatomic population linked to each player role $k\in \play$, and that the governing dynamics arise from individual interactions within these populations.
%
%
In the context of \eqref{eq.nRD}, such an evolutionary interpretation may be obtained as follows:
focusing on the case $n=2$ for simplicity, assume that each (nonatomic) player receives an opportunity to switch strategies at every ring of a Poisson alarm clock as described in detail in Chapter 3 of \cite{San10}.
In this context, if $\rho_{\alpha\beta}^{k}$ denotes the \emph{conditional switch rate} from strategy $\alpha\in\act_{k}$ to strategy $\beta\in\act_{k}$ in population $k\in\play$ (i.e. the probability of an $\alpha$-strategist becoming a $\beta$-strategist up to a normalization factor), then the strategy shares $x_{k\alpha}$ will follow the \emph{mean dynamics associated to $\rho$}:
\begin{equation}
\label{eq.MD}
\tag{MD$_{\rho}$}
\dot x_{k\alpha} = \insum_{\beta}^{k} x_{k\beta} \rho_{\beta\alpha}^{k} - x_{k\alpha} \insum_{\beta}^{k} \rho_{\alpha\beta}^{k}.
\end{equation}

Conditional switch rates are usually functions of the current population state $x\in\strat$ and the corresponding payoffs $u_{k\alpha}$:
for instance, the well-known ``imitation of success'' revision protocol is described by the rule
\begin{equation}
\label{eq.success1}
\rho_{\alpha\beta}^{k} = x_{k\beta} u_{k\beta}(x),
\end{equation}
and the resulting mean field \eqref{eq.MD} is simply the standard replicator equation \citep{Hof95,San10}.%
\footnote{Other revision protocols that lead to the replicator dynamics are Schlag's (1998)``pairwise proportional imitation'' and the protocol of ``pure imitation driven by dissatisfaction'' of \cite{BW96}.
We will only focus here on ``imitation of success'' for simplicity; that said, the discussion that follows can easily be adapted to these revision protocols as well.}
On the other hand, if players are more sophisticated and keep track of the \emph{long-term} performance $U_{k\alpha}(t) = \int_{0}^{t} u_{k\alpha}(x(s)) \dd s$ of a strategy over time (instead of only considering the instantaneous payoffs $u_{k\alpha}$), then the ``long-term'' analogue of the revision rule \eqref{eq.success1} will be
\begin{equation}
\label{eq.success2}
\tilde\rho_{\alpha\beta}^{k}
	= x_{k\beta} U_{k\beta}
	= x_{k\beta} \int_{0}^{t} u_{k\beta}(x(s)) \dd s,
\end{equation}
leading in turn to the mean dynamics:
\begin{equation}
\label{eq.MD2}
\dot x_{k\alpha} = x_{k\alpha} \left(U_{k\alpha} - \insum_{\beta}^{k} x_{k\beta} U_{k\beta}\right).
\end{equation}

Of course, \eqref{eq.MD2} is not a dynamical system per se, but a system of integro-differential equations (recall that $U_{k\alpha}$ has an integral dependence on $x$).
However, by differentiating \eqref{eq.MD2} with respect to time and recalling that $\dot U_{k\alpha} = u_{k\alpha}$, we readily obtain:
\begin{equation}
\label{eq.MD2.der}
\ddot x_{k\alpha}
	= \dot x_{k\alpha} \left(U_{k\alpha} - \insum_{\beta}^{k} x_{k\beta} U_{k\beta}\right)
	+ x_{k\alpha}\left(u_{k\alpha} - \insum_{\beta}^{k} x_{k\beta} u_{k\beta}\right)
	- x_{k\alpha} \insum_{\beta}^{k} \dot x_{k\beta} U_{k\beta}.
\end{equation}
By \eqref{eq.MD2}, the first term in the RHS of \eqref{eq.MD2.der} above will be equal to $\dot x_{k\alpha}^{2}\big/x_{k\alpha}$;
moreover, some easy algebra also yields
\begin{equation}
\begin{aligned}
\insum_{\beta}^{k} \dot x_{k\beta} U_{k\beta}
	&= \insum_{\beta}^{k} x_{k\beta} \left(U_{k\beta} - \insum_{\gamma}^{k} x_{k\gamma} U_{k\gamma}\right) U_{k\beta}\\
	&= \insum_{\beta}^{k} x_{k\beta} U_{k\beta}^{2} - \left(\insum_{\beta}^{k} x_{k\beta} U_{k\beta}\right)^{2}\\
	&= \insum_{\beta}^{k} x_{k\beta} \left(U_{k\beta} - \insum_{\gamma}^{k} x_{k\gamma} U_{k\gamma}\right)^{2}
	= \insum_{\beta} \dot x_{k\beta}^{2}\big/x_{k\beta}.
\end{aligned}
\end{equation}
Thus, after some rearranging, \eqref{eq.MD2.der} becomes
\begin{equation}
\ddot x_{k\alpha}
= x_{k\alpha} \left(u_{k\alpha}(x) - u_{k}(x)\right)
+ x_{k\alpha} \left(\dot x_{k\alpha}^{2} \big/ x_{k\alpha}^{2} - \insum^{k}_{\beta} \dot x_{k\beta}^{2} \big/ x_{k\beta}\right),
\end{equation}
i.e. the mean dynamics associated to the ``imitation of long-term success'' revision protocol \eqref{eq.success2} is just the second order replicator equation \eqref{eq.2RD} with $\lambda_{k}=1$.

The higher order dynamics \eqref{eq.nRD} may be derived from similar considerations, simply by taking a revision protocol of the form \eqref{eq.success2} with $U$ replaced by a different (higher order) payoff aggregation scheme.
Accordingly, the evolutionary significance of higher order is similar to its learning interpretation:
higher order dynamics arise when players revise their strategies based on long-term performance estimates instead of instantaneous payoff information.
Obviously, this opens the door to higher order variants of other population dynamics that arise from revision protocols (such as the Smith dynamics and other pairwise comparison dynamics), but since this discussion would take us too far afield, we will delegate it to a future paper.

\begin{remark*}
The definition of the payoff aggregates $U_{k\beta}(t) = \int_{0}^{t} u_{k\beta}(x(s)) \dd s$ gives $\dot x_{k\beta}(0) = 0$ in \eqref{eq.MD2}, so players will be starting ``at rest'' in \eqref{eq.MD2.der}.
On the other hand, just as in \eqref{eq.1score.int}, if players are inherently predisposed towards one strategy or another, these aggregates could be offset by some nonzero initial bias $U_{k\beta}(0)$, which would then translate into a nonzero initial velocity $\dot x(0)$.
In view of the above, when it is safe to assume that players are not ex-ante biased, starting at rest will be our baseline assumption.
\end{remark*}


\section{Elimination of dominated strategies.}
\label{sec.dominance}

A fundamental rationality requirement for any class of game dynamics is that dominated strategies die out in the long run.
Formally, if play evolves over time, say along the path $x(t)$, $t\geq0$, we will say that the strategy $\alpha\in\act_{k}$ \emph{becomes extinct along $x(t)$} if $x_{k\alpha}(t) \to 0$ as $t \to\infty$;
more generally, for mixed strategies $q_{k}\in\strat_{k}$, we will follow \cite{SZ92} and say that $q_{k}$ becomes extinct along $x(t)$ if $\min\{x_{k\alpha}(t):\alpha\in\supp(q_{k})\}\to0$, with the minimum taken over the support $\supp(q_{k}) \equiv \{\beta\in\act_{k}:q_{k\beta}>0\}$ of $q_{k}$.
Equivalently, if we let
\begin{equation}
\label{eq.KL}
\dkl(q_{k}\midd x_{k})
= \insum_{\alpha} q_{k\alpha} \log\left(q_{k\alpha}\big/ x_{k\alpha}\right)
\end{equation}
denote the \emph{Kullback-Leibler divergence} of $x_{k}$ with respect to $q_{k}$ (with the usual convention $0\cdot \log 0=0$ when $q_{k\alpha}=0$), then $\dkl(q_{k}\midd x_{k})$ blows up to $+\infty$ whenever $\min\{x_{k\alpha}:\alpha\in\supp(q_{k})\}\to0$, so $q_{k}\in\strat_{k}$ becomes extinct along $x(t)$ if and only if $\dkl(q_{k}\midd x_{k}(t))\to\infty$ as $t\to\infty$ (see e.g. \citealt{Wei95}).

In light of the above, our first result is to show that in the $n$-th order replicator dynamics, dominated strategies die out at a rate which is exponential in $t^{n}$:

\begin{theorem}
\label{thm.dom}
Let $x(t)$ be an interior solution orbit of the $n$-th order replicator dynamics \eqref{eq.nRD}. If $q_{k}\in\strat_{k}$ is iteratively dominated, we will have
\begin{equation}
\label{eq.domrate.mixed}
\dkl(q_{k}\midd x_{k}(t)) \geq \lambda_{k} c t^{n}\big/n! + \bigoh(t^{n-1}),
\end{equation}
for some constant $c>0$.
In particular, for pure strategies $\alpha\prec\beta$, we will have
\begin{equation}
\label{eq.domrate.pure}
x_{k\alpha}(t)\big/x_{k\beta}(t)
\leq \exp\left(-\lambda_{k} \Delta u_{\beta\alpha} t^{n}\big/n! + \bigoh(t^{n-1})\right),
\end{equation}
where $\Delta u_{\beta\alpha} = \min_{x\in\strat}\{u_{k\beta}(x) - u_{k\alpha}(x)\}>0$.
\end{theorem}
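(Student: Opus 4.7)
The plan is to exploit the Gibbs map \eqref{eq.Gibbs}, which yields the log-linear relation
\begin{equation*}
\log x_{k\alpha}(t) = \lambda_{k} y_{k\alpha}(t) - \log\!\left(\insum_{\beta}^{k}\exp(\lambda_{k} y_{k\beta}(t))\right),
\end{equation*}
combined with the integral representation \eqref{eq.nscore} of the $n$-fold aggregated scores $y_{k\alpha}(t)$. The key observation is that the partition-function term $\log \sum_{\beta} \exp(\lambda_{k} y_{k\beta})$ depends only on $k$, so it drops out of any expression involving a probability-weighted difference over $\alpha\in\act_{k}$.

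For the first (non-iterative) case, suppose $q_{k}\prec q_{k}'$ for some $q_{k}'\in\strat_{k}$. I would write
\begin{equation*}
\dkl(q_{k}\midd x_{k}(t)) - \dkl(q_{k}'\midd x_{k}(t))
= C + \lambda_{k}\insum_{\alpha}^{k}(q_{k\alpha}'-q_{k\alpha})\,y_{k\alpha}(t),
\end{equation*}
where $C$ collects the ($t$-independent) entropy constants of $q_{k}$ and $q_{k}'$; the partition function cancels because $\sum_{\alpha}(q_{k\alpha}'-q_{k\alpha})=0$. Substituting \eqref{eq.nscore} and using the multilinearity of $u_{k}$ turns the $\alpha$-sum inside the integral into exactly the payoff gap $u_{k}(q_{k}';x_{-k}(s))-u_{k}(q_{k};x_{-k}(s))$, which is uniformly bounded below by some $c>0$ on the compact space $\strat_{-k}$ by strict dominance. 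A direct evaluation of $\int_{0}^{t}(t-s)^{n-1} c\dd s = ct^{n}/n$ then yields
\begin{equation*}
\insum_{\alpha}^{k}(q_{k\alpha}'-q_{k\alpha})\,y_{k\alpha}(t) \geq \frac{c\, t^{n}}{n!} + \bigoh(t^{n-1}),
\end{equation*}
the $\bigoh(t^{n-1})$ term absorbing the polynomial in $t$ coming from the initial-condition part of \eqref{eq.nscore}. Since $\dkl(q_{k}'\midd x_{k}(t))\geq 0$, bound \eqref{eq.domrate.mixed} follows. For the pure-strategy statement \eqref{eq.domrate.pure}, the same log-linear identity gives $\log(x_{k\alpha}/x_{k\beta}) = \lambda_{k}(y_{k\alpha}-y_{k\beta})$ directly, and plugging in the integral representation with the \emph{pointwise} lower bound $u_{k\beta}-u_{k\alpha}\geq\Delta u_{\beta\alpha}$ gives the claim upon exponentiating.

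For iteratively dominated $q_{k}$, I would argue by induction on the round $r$ of deletion in which $q_{k}$ first becomes dominated. For $r=1$ the previous paragraph applies. Suppose the bound has been established for all strategies deleted in rounds $<r$; by the induction hypothesis the components $x_{\ell\gamma}(t)$ for each action $\gamma$ deleted before round $r$ satisfy $x_{\ell\gamma}(t)=\exp(-\Omega(t^{n}))$, in particular super-polynomially small. Let $q_{k}'\in\strat_{k}$ dominate $q_{k}$ in the restriction $\game^{(r-1)}$ obtained after $r-1$ rounds. Repeating the cross-entropy calculation, the payoff gap $\Delta(x_{-k}) = u_{k}(q_{k}';x_{-k})-u_{k}(q_{k};x_{-k})$ is uniformly bounded below on the reduced profile space $\strat_{-k}^{(r-1)}$ by some $c>0$, and the distance of $x_{-k}(s)$ from $\strat_{-k}^{(r-1)}$ decays exponentially in $s^{n}$. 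By uniform continuity of $\Delta$, one has $\Delta(x_{-k}(s))\geq c/2$ for all $s$ past some finite $T$, and the contribution of $s\in[0,T]$ to the integral is only $\bigoh(t^{n-1})$. Thus $\int_{0}^{t}(t-s)^{n-1}\Delta(x_{-k}(s))\dd s \geq (c/2)\,t^{n}/n + \bigoh(t^{n-1})$, closing the induction.

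The only genuinely delicate point is the inductive step: one must verify that after rounds of elimination the effective payoff gap retains a \emph{uniformly} positive lower bound along the orbit. This is where the super-polynomial (exponential-in-$t^{n}$) decay provided by the induction hypothesis is essential—anything weaker would leave a persistent polynomial error that could degrade the $t^{n}/n!$ rate. Everything else reduces to the clean book-keeping of initial-condition polynomials and the identity $\int_{0}^{t}(t-s)^{n-1}\dd s = t^{n}/n$.
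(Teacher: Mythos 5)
Your proposal is correct and follows essentially the same route as the paper's proof: the paper works with $V_{k} = \dkl(q_{k}\midd x_{k}) - \dkl(q_{k}'\midd x_{k})$, differentiates $n$ times (the partition function cancelling exactly as you note) and applies Taylor's theorem with Lagrange remainder, which is just the derivative-side version of your substitution of the integral representation \eqref{eq.nscore}; the induction over elimination rounds is likewise identical in spirit. One small over-claim in your commentary: the super-polynomial decay supplied by the induction hypothesis is not actually essential \textendash\ mere convergence of the eliminated shares to zero suffices, since the contribution of any finite initial interval $[0,T]$ to the $n$-fold integral is already only $\bigoh(t^{n-1})$, which is exactly how the paper argues.
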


\begin{figure}[t]
\centering
\subfigure[Portrait of a dominance solvable game.]{
\label{subfig.dom.portrait}
\includegraphics[width=0.47\columnwidth]{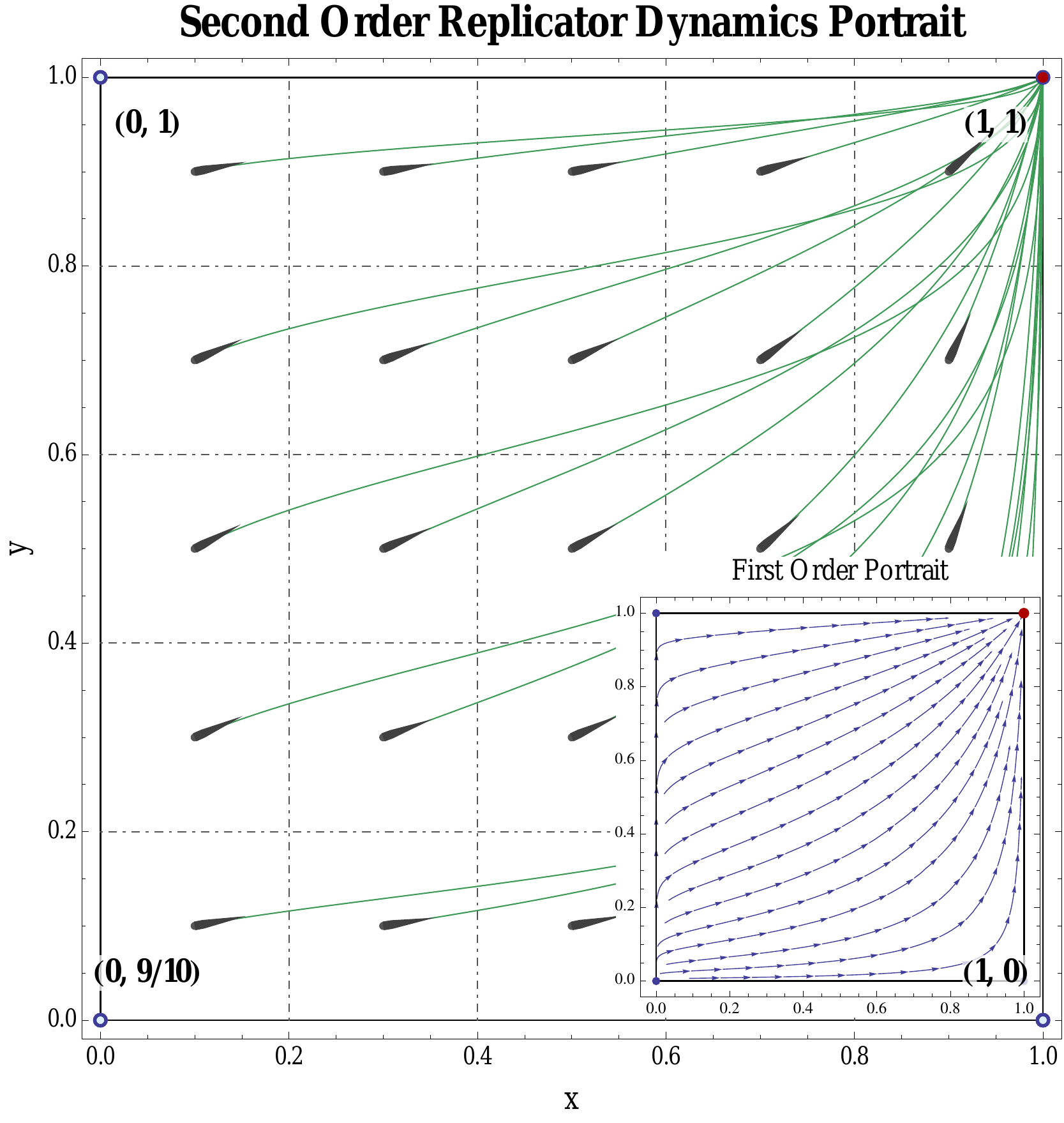}}
\hfill
\subfigure[Rate of extinction of dominated strategies.]{
\label{subfig.dom.rate}
\includegraphics[width=0.49\columnwidth]{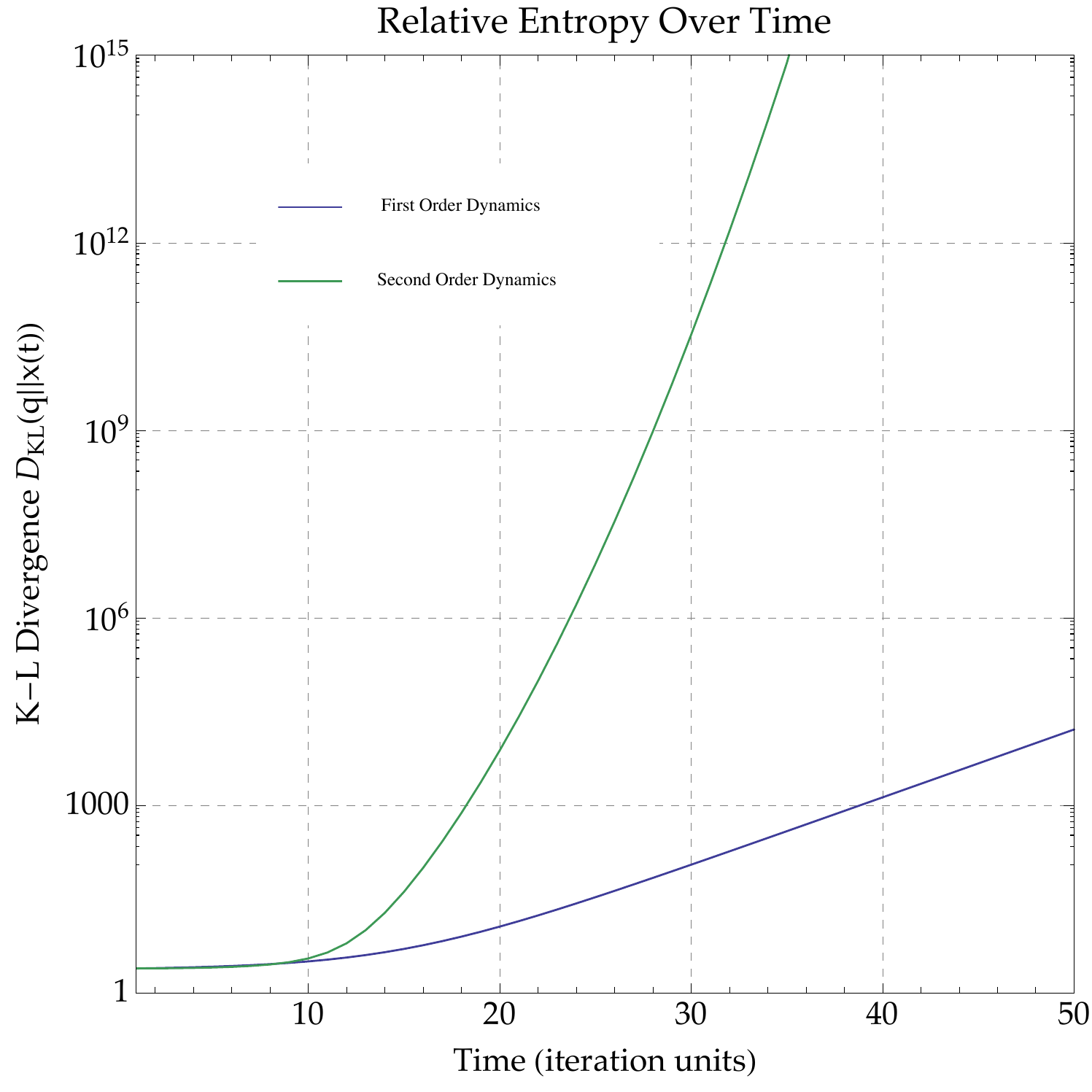}}
\caption{
\footnotesize
Extinction of dominated strategies in the first and second order replicator dynamics.
In Fig.~\ref{subfig.dom.portrait} we plot the second order solution orbits of a dominance solvable game with payoff matrices
$U_{1} = ((1,1),(0,0))$ for the ``$x$'' player and $U_{2} = ((1,0),(1,9/10))$ for the ``$y$'' player (see also the figure's labels).
In Fig.~\ref{subfig.dom.rate}, we illustrate the rate of extinction of the dominated strategy of player $1$ by plotting the corresponding K-L divergence of a typical trajectory:
the K-L distance grows exp-quadratically in second order dynamics compared to exp-linearly in first order.}
\label{fig.dom}
\end{figure}

As an immediate corollary, we then obtain:

\begin{corollary}
\label{cor.dom}
In dominance-solvable games, the $n$-th order replicator dynamics \eqref{eq.nRD} converge to the game's rational solution.
\end{corollary}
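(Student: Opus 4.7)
The plan is to derive Corollary \ref{cor.dom} as an essentially immediate consequence of Theorem \ref{thm.dom}. First I would unpack what dominance-solvability means at the pure-strategy level. Writing $\strat^{\infty} = \inprod_{k}\simplex(\act_{k}^{\infty})$, where $\act_{k}^{\infty}\subseteq\act_{k}$ is the set of pure strategies of player $k$ that survive iterated elimination, the singleton assumption $\strat^{\infty} = \{q^{\ast}\}$ forces each $\act_{k}^{\infty}$ to be a singleton $\{\alpha_{k}^{\ast}\}$, so $q^{\ast}$ is in fact the pure profile $(\alpha_{1}^{\ast},\dotsc,\alpha_{N}^{\ast})$, and every $\alpha\in\act_{k}\exclude\{\alpha_{k}^{\ast}\}$ is iteratively dominated.

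Next, fix an interior initial condition and let $x(t)$ be the corresponding solution orbit of \eqref{eq.nRD}. For each player $k$ and each $\alpha\in\act_{k}\exclude\{\alpha_{k}^{\ast}\}$, Theorem \ref{thm.dom} applied with the pure mixed strategy $q_{k}=e_{\alpha}$ yields $\dkl(e_{\alpha}\midd x_{k}(t)) = -\log x_{k\alpha}(t)\to\infty$, i.e. $x_{k\alpha}(t)\to0$ as $t\to\infty$. Summing over $\alpha\neq\alpha_{k}^{\ast}$ and using the simplex constraint $\insum_{\beta}^{k} x_{k\beta}(t) = 1$ gives $x_{k,\alpha_{k}^{\ast}}(t)\to 1$; assembling these coordinate-wise limits over $k\in\play$ produces $x(t)\to q^{\ast}$, as claimed.

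The only subtlety concerns the \emph{interior orbit} hypothesis of Theorem \ref{thm.dom}. For trajectories generated by the learning dynamics \eqref{eq.nLD} together with the Gibbs map \eqref{eq.Gibbs}, every strategy share $x_{k\alpha}(t)$ is automatically strictly positive, so interiority is preserved for all time (cf. Remark \ref{rem.invariance}). If one instead starts from a proper subface of $\strat$, one restricts \eqref{eq.nRD} to the corresponding subgame $\game'\leq\game$ and repeats the argument inside $\strat'$; the same conclusion holds provided the initial subface still contains $\alpha_{k}^{\ast}$ for every $k$. I do not anticipate a genuine technical obstacle here: all of the analytic content is concentrated in Theorem \ref{thm.dom}, and what remains is purely the combinatorial bookkeeping of how dominance-solvability transfers to coordinate-wise convergence of $x(t)$.
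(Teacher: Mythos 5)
Your proposal is correct and matches the paper's intent exactly: the paper presents Corollary \ref{cor.dom} as an immediate consequence of Theorem \ref{thm.dom} without spelling out the details, and your elaboration (dominance-solvability forces each $\act_{k}^{\infty}$ to be a singleton, Theorem \ref{thm.dom} applied to each iteratively dominated pure strategy gives $x_{k\alpha}(t)\to0$, and the simplex constraint then yields $x(t)\to q^{\ast}$) is precisely the argument being invoked. Your handling of the interiority hypothesis via the Gibbs map is also consistent with Remark \ref{rem.invariance}.
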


\setcounter{remark}{0}

\begin{remark*}
Before proving Theorem \ref{thm.dom}, it is worth nothing that even though \eqref{eq.domrate.mixed} and \eqref{eq.domrate.pure} have been stated as inequalities, one can use any upper bound for the game's payoffs to show that the rate of extinction of dominated strategies in terms of the K-L divergence is indeed $\bigoh(t^{n})$.%
\footnote{In fact, the coefficients that make \eqref{eq.domrate.mixed} and \eqref{eq.domrate.pure} into asymptotic equalities can also be determined, but we will not bother with this calculation here.}
As a result, the asymptotic rate of extinction of dominated strategies in the $n$-th order replicator dynamics \eqref{eq.nRD} is $n$ orders as fast as in the standard first order dynamics \eqref{eq.1RD},
so irrational play becomes extinct much faster in higher orders.
\end{remark*}

\begin{proof}[Proof of Theorem \ref{thm.dom}]
We will begin by showing that if $q_{k}\in\strat_{k}$ is dominated by $q_{k}'\in\strat_{k}$, then $\dkl(q_{k}\midd x_{k}(t)) \geq c t^{n}\big/n!$ for some positive constant $c>0$.
Indeed, let $V_{k}(x) = \dkl(q_{k}\midd x_{k}) - \dkl(q'_{k}\midd x_{k})$, and rewrite \eqref{eq.Gibbs} as $\log x_{k\alpha} = \lambda_{k} y_{k\alpha} - \log(\pf_{k}(y))$ where $\pf_{k}(y)=\sum_{\beta}^{k} \exp(\lambda_{k} y_{k\beta})$ denotes the partition function of player $k$.
Then, some algebra yields:
\begin{flalign}
V_{k}(x)	&=
	\insum_{\alpha\in\supp(q)} q_{k\alpha} \log \left(q_{k\alpha}\big/ x_{k\alpha}\right)\;\;
	- \insum_{\alpha\in\supp(q')} q_{k\alpha}' \log \left(q_{k\alpha}'\big/ x_{k\alpha}\right)\notag\\
	&= \insum_{\alpha}^{k} \big(q_{k\alpha}' - q_{k\alpha}\big) \log x_{k\alpha} + h_{k}(q_{k},q_{k}')\notag\\
	&= \insum_{\alpha}^{k} \big(q_{k\alpha}' - q_{k\alpha}\big) \lambda_{k} y_{k\alpha} + h_{k}(q_{k},q_{k}'),
\end{flalign}
where $h_{k}(q_{k},q_{k}')$ is a constant depending only on $q_{k}$ and $q_{k}'$, and the last equality follows from the fact that $\sum_{\alpha}^{k} (q_{k\alpha}' - q_{k\alpha}) \log \pf_{k} =0$ (recall that $\sum_{\alpha}^{k} q_{k\alpha} = \sum_{\alpha}^{k} q_{k\alpha}' = 1$).
In this way, we obtain:
\begin{flalign}
\label{eq.diffdom}
\ddt[n] V_{k}(x(t))
	&= \lambda_{k} \insum_{\alpha}^{k} \big(q_{k\alpha}' - q_{k\alpha}\big) y_{k\alpha}^{(n)}
	= \lambda_{k} \insum_{\alpha}^{k} \big(q_{k\alpha}' - q_{k\alpha}\big) u_{k\alpha}(x(t))\notag\\
	&=\lambda_{k} \left[u_{k}(q_{k}';x_{-k}(t)) - u_{k}(q_{k};x_{-k}(t))\right]
	\geq \lambda_{k} \Delta u_{k} >0,
\end{flalign}
where the constant $\Delta u_{k}$ is defined as $\Delta u_{k} = \min_{\strat_{-k}}\{u_{k}(q_{k}';x_{-k}) - u_{k}(q_{k};x_{-k})\}$ and its positivity follows from the fact that $\strat$ is compact and $u_{k}$ is continuous.
Hence, if we set $c_{r} = (r!)^{-1} \left.\frac{d^{r} V_{k}}{dt^{r}}\right|_{t=0}$, $r=0\dotsc n-1$, Taylor's theorem with Lagrange remainder readily gives:
\begin{equation}
\label{eq.domrate.estimate}
V_{k}(x(t)) \geq \lambda_{k} \Delta u_{k} t^{n}\big/n! + \insum_{r=0}^{n-1} c_{r} t^{r},
\end{equation}
and our assertion follows by noting that $\dkl(q_{k}\midd x_{k}(t))\geq V_{k}(x(t))$.
In particular, for pure strategies $\alpha\prec\beta$, we will have $V_{k}(x(t)) = \log x_{k\beta}(t) - \log x_{k\alpha}(t)$, so \eqref{eq.domrate.estimate} gives:
\begin{equation}
\log\big(x_{k\beta}(t)\big/ x_{k\alpha}(t)\big)
\geq \lambda_{k}\, \Delta u_{\beta\alpha} t^{n}\big/n! + \bigoh(t^{n-1}),
\end{equation}
and \eqref{eq.domrate.pure} follows by exponentiating.

Now, to establish the theorem for \emph{iteratively} dominated strategies, we will resort to induction on the rounds of elimination.
To that end, let $X_{k}^{r}$ denote the space of strategies that survive $r$ elimination rounds, and assume that $\dkl(q_{k}\midd x_{k}(t)) = \bigoh(t^{n})$ for all strategies $q_{k}\notin\strat_{k}^{r}$;
in particular, if $\alpha\notin\act_{k}^{r}\equiv\act_{k}\cap\strat_{k}^{r}$, this implies that $x_{k\alpha}(t)\to 0$ as $t\to\infty$.
We will show that this also holds if $q_{k}\in\strat_{k}^{r}$ survives for $r$ deletion rounds but dies in the subsequent one.
Indeed, if $q_{k}\in\strat_{k}^{r}\exclude\strat_{k}^{r+1}$, there will be some $q_{k}'\in\strat_{k}^{r}$ with $u_{k}(q_{k}';x_{-k}) > u_{k}(q_{k};x_{-k})$ for all $x_{-k}\in\strat_{-k}^{r}$.
With this in mind, decompose $x\in\strat$ as $x=x^{r}+z^{r}$ where $x^{r}$ denotes the ``$r$-ra\-tio\-na\-lizable'' part of $x$, i.e. the orthogonal projection of $x$ on the subspace of $\strat$ spanned by the surviving pure strategies $\act_{\ell}^{r}$, $\ell\in\play$.
Then, if we set $\Delta u_{k}^{r} = \min\{u_{k}(q_{k}';\alpha_{-k}) - u_{k}(q_{k};\alpha_{-k}):\alpha_{-k}\in\act_{-k}^{r}\}$, we will also have:
\begin{equation}
\label{eq.diff.adm}
u_{k}(q_{k}';x_{-k}^{r}) - u_{k}(q_{k};x_{-k}^{r}) \geq \Delta u_{k}^{r}>0\quad
\text{for all $x_{-k}\in\strat_{-k}$}.
\end{equation}
Moreover, it is easy to see that our induction hypothesis implies $z^{r}(t)\to 0$ as $t\to\infty$ (recall that $x_{k\alpha}(t)\to0$ for all $\alpha\notin\act_{k}^{r}$), so, for large enough $t$, we also get:
\begin{equation}
\label{eq.diff.dom}
|u_{k}(q_{k}';z_{-k}^{r}(t)) - u_{k}(q_{k};z_{-k}^{r}(t))| < \Delta u_{k}^{r}/2.
\end{equation}
Hence, by combining \eqref{eq.diff.adm} and \eqref{eq.diff.dom}, we obtain $u_{k}(q_{k}';x_{-k}(t)) - u_{k}(q_{k};x_{-k}(t)) > \Delta u_{k}^{r}/2$ for large $t$, and the induction is complete by plugging this last estimate into \eqref{eq.diffdom} and proceeding as in the base case $r=0$ (our earlier assertion).
\end{proof}

On the other hand, if a strategy is only \emph{weakly} dominated,
the payoff differences $\Delta u_{\beta\alpha}$ in \eqref{eq.domrate.pure} and related estimates vanish, so
Theorem \ref{thm.dom} cannot guarantee that it will be annihilated.
In fact, it is well-known that weakly dominated strategies may survive in the standard first order replicator dynamics:
if the pure strategy $\alpha\in\act_{k}$ of player $k$ is weakly dominated by $\beta\in\act_{k}$, and if all adversarial strategies $\alpha_{-k}\in\act_{-k}$ against which $\beta$ performs better than $\alpha$ die out, then $\alpha$ may survive for an open set of initial conditions \citep[for instance, see Example~5.4 and Proposition 5.8 in][]{Wei95}.

Quite remarkably, this can never be the case in a higher order setting if players start unbiased:

\begin{theorem}
\label{thm.dom.weak}
Let $x(t)$ be an interior solution orbit of the $n$-th order ($n\geq2$) replicator dynamics \eqref{eq.nRD} that starts at rest:
$\dot x(0) = \dotso = x^{(n-1)}(0) = 0$.
If $q_{k}\in\strat_{k}$ is weakly dominated, then it becomes extinct along $x(t)$ with rate
\begin{equation}
\label{eq.domrate.weak}
\dkl(q_{k}\midd x_{k}(t)) \geq \lambda_{k} c t^{n-1}\big/(n-1)!,
\end{equation}
where $\lambda_{k}$ is the learning rate of player $k$ and $c>0$ is a positive constant.
\end{theorem}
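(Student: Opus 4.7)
The plan is to reuse the Lyapunov-like construction $V_k(x) = \dkl(q_k \midd x_k) - \dkl(q_k' \midd x_k)$ from the proof of Theorem \ref{thm.dom}, where $q_k'\in\strat_k$ is a weak dominator of $q_k$. As computed there, $V_k(x(t))$ equals $\lambda_k \sum_{\alpha}^{k} (q_{k\alpha}' - q_{k\alpha}) y_{k\alpha}(t)$ plus a constant, whence
\[
V_k^{(n)}(x(t)) = \lambda_k\bigl[u_k(q_k';x_{-k}(t)) - u_k(q_k;x_{-k}(t))\bigr] \geq 0.
\]
Compared with Theorem \ref{thm.dom}, the crucial difference is that weak dominance no longer provides a uniform positive lower bound on $V_k^{(n)}$, so the $t^n$ bound collapses; the goal is to recover a $t^{n-1}$ rate by exploiting \emph{both} the starting-at-rest initial condition and the interiority of the trajectory.

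The first key step is to combine the integral representation \eqref{eq.nscore} of the scores with the Gibbs-map identity \eqref{eq.logdiff}. Differentiating \eqref{eq.logdiff} and applying Fa\`a di Bruno's formula as in \eqref{eq.nchain} shows that whenever $\dot x(0) = \cdots = x^{(n-1)}(0) = 0$, the initial values $y_{k\alpha}^{(r)}(0)$ are independent of $\alpha\in\act_k$ for every $r = 1,\dotsc,n-1$. Since $\sum_\alpha^{k}(q_{k\alpha}' - q_{k\alpha}) = 0$, all polynomial correction terms of order $r\geq 1$ in \eqref{eq.nscore} are annihilated when paired with $q_k' - q_k$, leaving the clean identity
\[
V_k(x(t)) - V_k(x(0)) = \frac{\lambda_k}{(n-1)!} \int_{0}^{t} (t-s)^{n-1} \bigl[u_k(q_k';x_{-k}(s)) - u_k(q_k;x_{-k}(s))\bigr] \dd s.
\]

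The second step is to bound the integrand away from zero on an initial interval. By weak dominance and the multilinearity of $u_k$, there exists at least one pure profile $\hat\alpha_{-k}\in\act_{-k}$ with $u_k(q_k';\hat\alpha_{-k}) > u_k(q_k;\hat\alpha_{-k})$; expanding $f(x_{-k}) \equiv u_k(q_k';x_{-k}) - u_k(q_k;x_{-k})$ as a sum of nonnegative multilinear terms indexed by pure profiles shows that $f(x_{-k}) > 0$ whenever $x_{-k,\hat\alpha_{-k}} > 0$. Since $x(0)$ is interior and $x(t)$ varies continuously in $t$, there exist $\tau>0$ and $c_0>0$ such that $f(x_{-k}(s)) \geq c_0$ on $[0,\tau]$. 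For $t\geq 2\tau$, bounding $(t-s)^{n-1} \geq (t/2)^{n-1}$ on this subinterval yields $V_k(x(t)) \geq \lambda_k c\, t^{n-1}/(n-1)!$ for a suitable constant $c>0$, and \eqref{eq.domrate.weak} follows from $\dkl(q_k\midd x_k(t)) \geq V_k(x(t))$ (since $\dkl(q_k'\midd x_k(t))\geq 0$).

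The main obstacle I expect is the bookkeeping underlying the first step: translating ``starting at rest'' (a condition on $x$) into the vanishing of the degree-$r$ polynomial terms in the $y$-representation of $V_k$ (a condition on $y$) is not entirely immediate, because the Gibbs map \eqref{eq.Gibbs} is only a diffeomorphism modulo a per-player gauge (constant additive shifts of $y_k$ leave $x_k$ invariant). The argument exploits exactly this gauge freedom to argue that each $y_{k\alpha}^{(r)}(0)$ is constant in $\alpha$ and hence killed by the zero-sum vector $q_k'-q_k$; once that is in place, the remainder of the proof is a routine Taylor estimate built on the nonnegativity of the integrand.
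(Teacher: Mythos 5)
Your proposal is correct and takes essentially the same route as the paper's proof: the same comparison function $V_k = \dkl(q_k\midd x_k) - \dkl(q_k'\midd x_k)$, the same use of the at-rest initial condition (via Fa\`a di Bruno / the additive gauge freedom of the Gibbs map) to annihilate the lower-order Taylor terms, and the same appeal to interiority to bound the resulting nonnegative integral away from zero. The only cosmetic difference is that you integrate all $n$ times at once into a $(t-s)^{n-1}$ kernel, whereas the paper stops at $V_k^{(n-1)}$, bounds it below by a positive constant for large $t$, and then integrates $(n-1)$ more times.
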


The intuition behind this surprising result can be gleaned by looking at the reinforcement learning scheme \eqref{eq.nLD}.
If we take the case $n=2$ for simplicity, we see that the ``payoff forces'' $F_{k\alpha} \equiv u_{k\alpha}$ never point towards a weakly dominated strategy.
As a result, solution trajectories are always accelerated away from weakly dominated strategies, and even if this acceleration vanishes in the long run, the trajectory still retains a growth rate that drives it away from the dominated strategy.
By comparison, this is not the case in first order dynamics:
there, we only know that \emph{growth rates} point away from weakly dominated strategies, and if these rates vanish in the long run, solution trajectories might ultimately converge to a point where weakly dominated strategies are still present (see for instance Fig.~\ref{fig.dom.weak}.
The proof follows by formalizing these ideas:

\begin{proof}[Proof of Theorem \ref{thm.dom.weak}]
Let $q_{k}\preccurlyeq q_{k}'$ and let $\act_{-k}' \equiv \{\alpha_{-k}\in\act_{-k}: u_{k}(q_{k}';\alpha_{-k}) > u_{k}(q_{k};\alpha_{-k})\}$ be the set of pure strategy profiles of $k$'s opponents against which $q_{k}'$ yields a strictly greater payoff than $q_{k}$. Then, with notation as in the Proof of Theorem \ref{thm.dom}, we will have:
\begin{equation}
\label{eq.diffdom.weak.diff}
\ddt[n] V_{k}(x(t))
= \lambda_{k} \insum_{\alpha_{-k}\in\act_{-k}'}
\big[u_{k}(q_{k}';\alpha_{-k}) - u_{k}(q_{k};\alpha_{-k})\big]
x_{\alpha_{-k}}(t),
\end{equation}
where $x_{\alpha_{-k}} \equiv \prod_{\ell\neq k} x_{\alpha_{\ell}}$ denotes the $\alpha_{-k}$-th component of $x$. Thus, with $x(t)$ starting at rest, Fa\`a di Bruno's formula gives $\left.\frac{d^{r}V_{k}}{dt^{r}}\right|_{t=0}=0$ for all $r=1,\dotsc,n-1$, and a simple integration then yields:
\begin{equation}
\label{eq.diffdom.weak}
\ddt[n-1] V_{k}(x(t))
= \lambda_{k}
\insum_{\alpha_{-k}\in\act_{-k}'}
\big[u_{k}(q_{k}';\alpha_{-k}) - u_{k}(q_{k};\alpha_{-k})\big]
\int_{0}^{t} x_{\alpha_{-k}}(s) \id s,
\end{equation}
However, with $x(t)$ interior, the integrals in the above equation will be positive and increasing, so for some suitably chosen $c>0$ and $t$ large enough, we obtain
\begin{equation}
\label{eq.diffdom.weak.constant}
\ddt[n-1] V_{k}(x(t))
\geq \lambda_{k} c
> 0,
\end{equation}
and our claim follows from a ($n-1$)-fold integration.
\end{proof}

\begin{figure}[t]
\centering
\subfigure[Entry Deterrence]{
\label{subfig.EntryDeterrence}
\includegraphics[width=0.45\columnwidth]{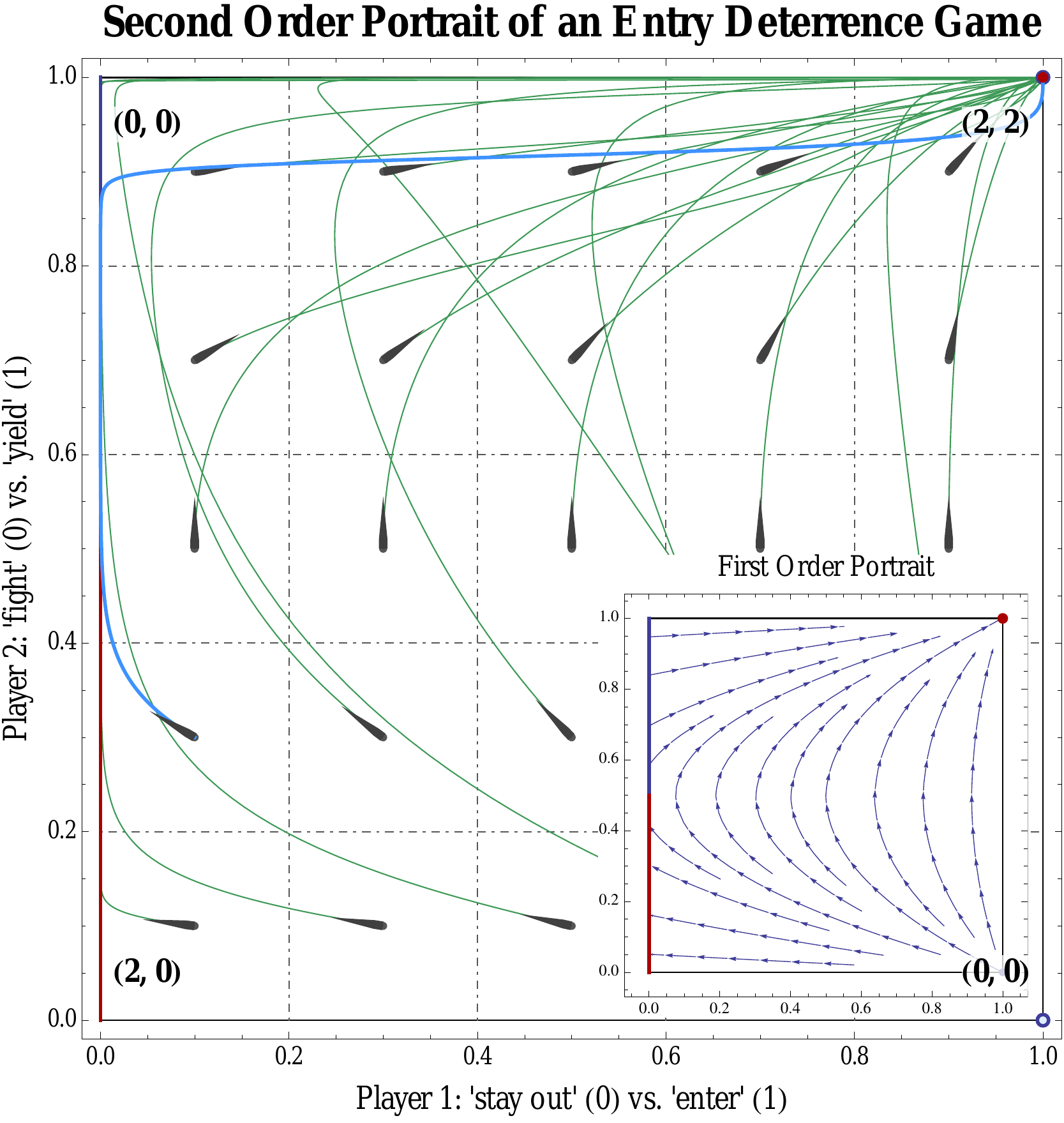}}
\hfill
\subfigure[Outside Option]{
\label{subfig.OutsideOption}
\includegraphics[width=0.47\columnwidth]{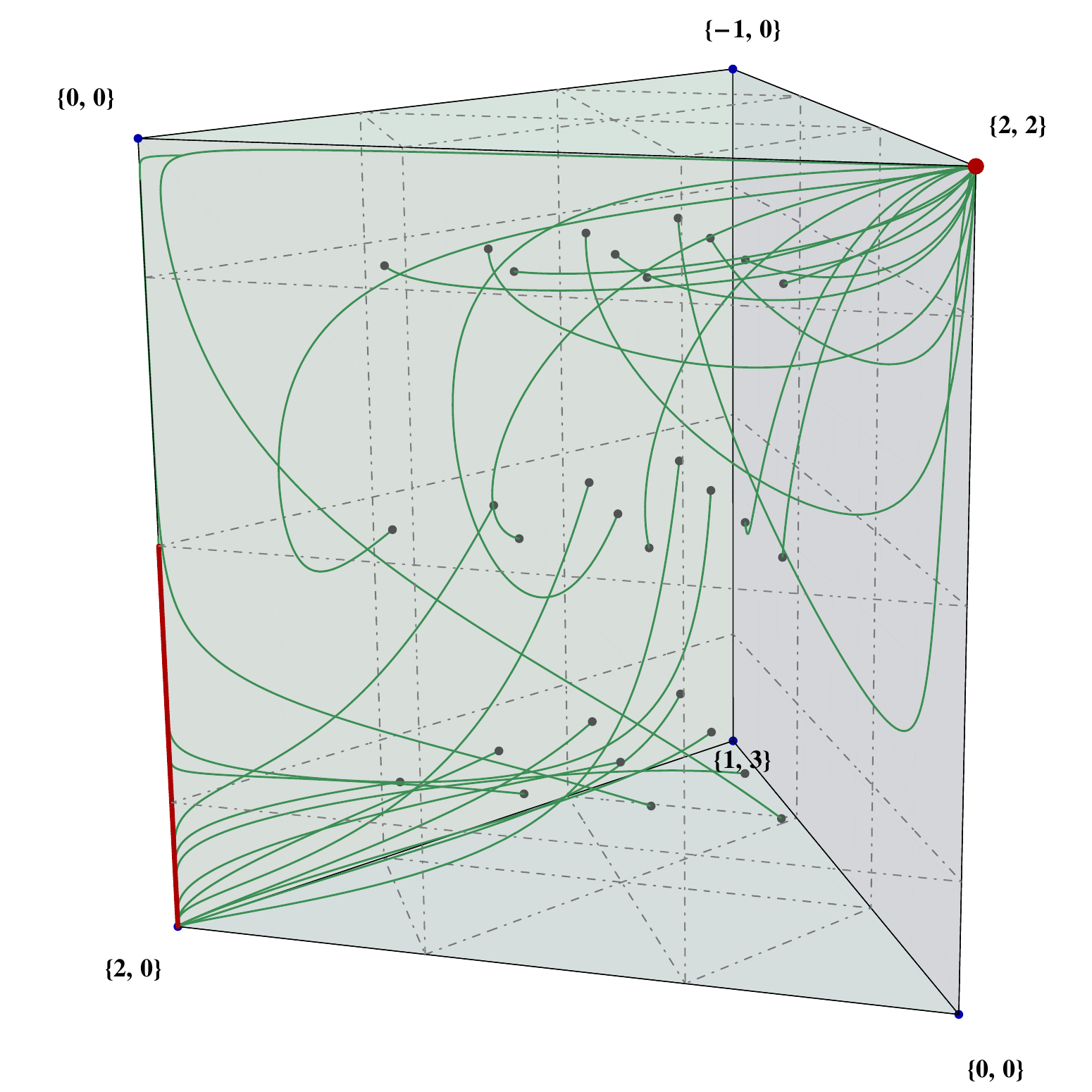}}
\caption{
\footnotesize
Extinction of weakly dominated strategies and survival of iteratively weakly dominated ones in the second order replicator dynamics.
Fig.~\ref{subfig.EntryDeterrence} shows solution orbits starting at rest in an Entry Deterrence game:
the weakly dominated strategy ``fight'' of Player 2 becomes extinct, in stark contrast to the first order case (compare the highlighted trajectory with the first order portrait in the inlay).
Fig.~\ref{subfig.OutsideOption} shows an Outside Option supergame where the strategy ``fight'' in Fig.~\ref{subfig.EntryDeterrence} is only \emph{iteratively} weakly dominated;
this strategy pays very well against certain initial conditions, so it ends up surviving when all evidence that it is (iteratively weakly) dominated vanishes.
(The payoff matrices for the Outside Option supergame are
$U_{1} = ((2,0),(0,2),(-1,1))$ for Player $1$ and $U_{2} = ((2,0),(0,0),(0,3))$ for Player $2$; see also the corresponding figure labels.)
}
\label{fig.dom.weak}
\end{figure}

In view of this qualitative difference between first and higher order dynamics, some further remarks are in order:

\setcounter{remark}{0}

\begin{remark}
In the first order replicator dynamics, the elimination of weakly dominated strategies when evidence of their domination survives requires that all players adhere to the same dynamics (see e.g. the proof of Proposition 3.2 in \citealp{Wei95}).
To wit, consider a simple Entry Deterrence game where a competitor (Player 1) ``enters'' or ``stays out'' of a market controlled by a monopolist (Player 2) who can either ``fight'' the entrant or ``share'' the market, and where ``fighting'' is a weakly dominated strategy that yields a strictly worse payoff if the competitor ``enters'' \citep[Ex.~5.4]{Wei95}.
Under the replicator dynamics, ``fight'' becomes extinct if ``enter'' survives (cf. Figure \ref{fig.dom.weak}); however, if Player 1 were to follow a different process under which ``enter'' survives but the integral of its population share over time is bounded, then ``fight'' does not become extinct (cf. the proof of Proposition 3.2 in \citealp{Wei95}).
In higher orders though, the proof of Theorem \ref{thm.dom.weak} goes through for any continuous play $x_{-k}(t)\in\strat_{-k}$, $t\geq0$, of $k$'s opponents, so weakly dominated strategies become extinct independently of how one's opponents evolve over time.
\end{remark}

\begin{remark}
\label{rem.rest}
As noted in Section \ref{sec.dynamics}, starting ``at rest'' is a natural assumption to make from both learning and evolutionary considerations.
First, as far as learning is concerned, this assumption means that players may start with any mixed strategy they wish, but that the learning process \eqref{eq.nLD} is not otherwise skewed towards one strategy or another;
similarly, with regards to evolution, starting with $\dot x(0) = \dotso = 0$ is just the baseline of the ``imitation of long-term success'' revision protocol \eqref{eq.success2}.

That said, Theorem \ref{thm.dom.weak} still holds if the players' initial velocities (or higher order derivates) are nonzero but small;
if they are too large, weakly dominated strategies may indeed survive.%
\footnote{More precisely, it suffices for the RHS of \eqref{eq.diffdom.weak} to exceed $V_{k}^{(r)}(0)$ for some $t>0$.}
This observation is important for strategies which are only \emph{iteratively} weakly dominated because, if a strategy becomes weakly dominated after removing a strictly dominated strategy, then the system's solutions could approach the face of $\strat$ associated with the resulting restriction of the game with a high velocity towards the newly weakly dominated strategy (e.g. if the iteratively weakly dominated strategy pays very well against the disappearing strictly dominated one; cf. Fig.~\ref{fig.dom.weak}).
Thus, although Theorem \ref{thm.dom.weak} guarantees the elimination of weakly dominated strategies, its conclusions do not extend to iteratively weakly dominated ones.
\end{remark}

\begin{remark}
\label{rem.DF}
A joint application of Theorems \ref{thm.dom.weak} and \ref{thm.dom} reveals that the higher order replicator dynamics \eqref{eq.nRD} perform one round of elimination of weakly dominated strategies followed by the elimination of all strictly dominated strategies.
This result may thus be seen as a dynamic justification of the claim of \cite{DF90} who argue that asking for the iterated deletion of weakly dominated strategies is too strong a requirement for ``rational'' play.

In particular, \citeauthor{DF90} posit that if players are not certain about their opponents' payoffs, then they will not choose a weakly dominated strategy;
however, to proceed with a second round of elimination, players must know that other players will not choose certain strategies, and since weak dominance is destroyed by arbitrarily small amounts of payoff uncertainty, only strictly dominated strategies may henceforth be deleted.
In the same spirit, weakly dominated strategies are eliminated in the higher order replicator dynamics when players begin unbiased;
however, because of the inertial character of \eqref{eq.nLD}, players may develop such a bias over time, so only (iteratively) strictly dominated strategies are sure to become extinct after that phase.%
\end{remark}

\begin{remark}
In a very recent paper, \cite{BHK13} observe a similar behavior in a refined variant of the best reply dynamics of \cite{GM91} where the players' best reply correspondences are modified to include only strategies that are best replies to an open set of nearby states of play.
This refined update process also eliminates weakly dominated strategies, but it requires players to be significantly more informed than in the myopic context of continuous-time deterministic dynamics:
it applies to highly informed, highly rational players who know not only their payoffs at the current state of play, but also their payoffs in all nearby states as well.
Instead, Theorem \ref{thm.dom.weak} shows that weakly dominated strategies (and weakly dominated equilibria) become extinct under much milder information assumptions, namely the players' payoffs at the current state.
\end{remark}


\begin{remark}
Tying in with Remark \ref{rem.rest} above, we get the following result for weakly dominated Nash equilibria (or for Nash equilibria whose support contains a weakly dominated strategy):
if $q$ is such an equilibrium and players start with sufficiently small learning bias $\dot y(0), \ddot y(0)$, etc., then $\dkl(q\midd x(t)) \to +\infty$.
In particular, there exists a neighborhood $V$ of $q$ in $\strat$ such that every solution orbit $x(t)$ of \eqref{eq.nRD} which starts at rest in $V$ will escape $V$ in finite time, never to return.
In this sense, weakly dominated equilibria are \emph{repelling}, so they may not be selected in the higher order replicator dynamics \eqref{eq.nRD} if players start at rest (see also Theorem \ref{thm.folk} in the following section).
\end{remark}

\begin{remark}
Finally, it is important to note that our estimate of the rate of extinction of weakly dominated strategies is one order lower than that of strictly dominated ones;
as a result, Theorem \ref{thm.dom.weak} does not imply the annihilation of weakly dominated strategies in first order dynamics (as well it shouldn't).
Instead, in first order, if there is some adversarial strategy against which the weakly dominant strategy gives a strictly greater payoff than the weakly dominated one, and if the share of this strategy always remains above a certain level, then the weakly dominated strategy becomes extinct (see e.g. Proposition 3.2 in \citealp{Wei95}).
In our higher order setting, this assumption instead implies that weakly dominated strategies become extinct as fast as \emph{strictly} dominated ones:
\end{remark}

\begin{proposition}
\label{prop.dom.weak.unconditional}
Let $x(t)$ be an interior solution of the $n$-th order replicator dynamics \eqref{eq.nRD}, and let $q_{k}\preccurlyeq q_{k}'$.
If there exists $\alpha_{-k}\in\act_{-k}$ with $u_{k}(q_{k};\alpha_{-k}) < u_{k}(q_{k}';\alpha_{-k})$ and $x_{\alpha_{-k}}(t) \geq \eps >0$ for all $t\geq0$, then:
\begin{equation}
\dkl(q_{k}\midd x_{k}(t))
\geq \eps \lambda_{k}
\big[u_{k}(q_{k}';\alpha_{-k}) - u_{k}(q_{k};\alpha_{-k})\big]
t^{n}\big/n! + \bigoh(t^{n-1}).
\end{equation}
\end{proposition}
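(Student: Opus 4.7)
\textbf{Proof plan for Proposition \ref{prop.dom.weak.unconditional}.} The strategy is to recycle the test function $V_{k}$ introduced in the proof of Theorem \ref{thm.dom} and combine it with the sum decomposition of $V_{k}^{(n)}$ obtained in the proof of Theorem \ref{thm.dom.weak}. Concretely, set
\begin{equation}
V_{k}(x) = \dkl(q_{k}\midd x_{k}) - \dkl(q_{k}'\midd x_{k}),
\end{equation}
so that $\dkl(q_{k}\midd x_{k}(t))\geq V_{k}(x(t))$ by non-negativity of the Kullback--Leibler divergence. Just as in the proof of Theorem \ref{thm.dom}, writing $\log x_{k\alpha} = \lambda_{k}y_{k\alpha} - \log\pf_{k}(y)$ reduces $V_{k}$ to an affine function of the scores $y_{k\alpha}$, modulo a constant depending only on $q_{k},q_{k}'$.

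Next, I would differentiate $n$ times along the orbit, use $y_{k\alpha}^{(n)} = u_{k\alpha}(x(t))$ from \eqref{eq.nLD}, and expand the payoff differences multilinearly over the pure profiles of $k$'s opponents (as in equation \eqref{eq.diffdom.weak.diff}) to obtain
\begin{equation}
\ddt[n] V_{k}(x(t))
= \lambda_{k} \insum_{\alpha_{-k}'\in\act_{-k}'}
\bigl[u_{k}(q_{k}';\alpha_{-k}') - u_{k}(q_{k};\alpha_{-k}')\bigr] x_{\alpha_{-k}'}(t),
\end{equation}
with $\act_{-k}'$ denoting the pure profiles against which $q_{k}'$ strictly outperforms $q_{k}$. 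Weak dominance ensures each summand is nonnegative, so I may discard every term other than the one indexed by the distinguished profile $\alpha_{-k}$ from the hypothesis; using $x_{\alpha_{-k}}(t)\geq\eps$ then yields the uniform lower bound
\begin{equation}
\ddt[n] V_{k}(x(t))
\geq \eps\lambda_{k}\bigl[u_{k}(q_{k}';\alpha_{-k}) - u_{k}(q_{k};\alpha_{-k})\bigr].
\end{equation}

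Finally, Taylor's theorem with Lagrange remainder applied at $t=0$ (using the initial values of $V_{k}$ and its first $n-1$ derivatives, which are bounded constants determined by the initial state of \eqref{eq.nLD}) turns the constant lower bound on $V_{k}^{(n)}$ into
\begin{equation}
V_{k}(x(t)) \geq \eps\lambda_{k}\bigl[u_{k}(q_{k}';\alpha_{-k}) - u_{k}(q_{k};\alpha_{-k})\bigr] t^{n}\big/n! + \bigoh(t^{n-1}),
\end{equation}
and the claim follows. The only delicate point is checking that all omitted terms in the sum are indeed nonnegative, but this is immediate from the definition of weak dominance, so no real obstacle arises; this proposition is essentially a sharpened combination of Theorems \ref{thm.dom} and \ref{thm.dom.weak} tailored to the case where evidence of weak dominance survives uniformly in time.
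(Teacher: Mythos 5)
Your proposal is correct and follows essentially the same route as the paper: the paper's proof likewise bounds the decomposition \eqref{eq.diffdom.weak.diff} from below by $\eps\lambda_{k}\bigl[u_{k}(q_{k}';\alpha_{-k}) - u_{k}(q_{k};\alpha_{-k})\bigr]$ using the distinguished profile, and then invokes the Taylor/Lagrange argument from the proof of Theorem \ref{thm.dom}. Your extra check that the discarded summands are nonnegative is exactly the (implicit) justification the paper relies on.
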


\begin{proof}
Simply note that the estimate \eqref{eq.diffdom.weak.diff} is bounded from below by $\eps \lambda_{k} u_{k}(q_{k}'-q_{k};\alpha_{-k})$ and follow the same reasoning as in the proof of Theorem \ref{thm.dom}.
\end{proof}

\section{Stability of Nash play and the folk theorem.}
\label{sec.folk}

In games that cannot be solved by the successive elimination of dominated strategies, one usually tries to identify the game's Nash equilibria instead.
Thus, given the prohibitive complexity of these solutions \citep{DGP06}, one of the driving questions of evolutionary game theory has been to explain how Nash play might emerge over time as the byproduct of a simpler, adaptive dynamic process.

\subsection{The higher order folk theorem.}

A key result along these lines is the \emph{folk theorem of evolutionary game theory} \citep{Wei95,HS88,San10};
for the multi-population replicator dynamics \eqref{eq.1RD}, this theorem can be summarized as follows:
\begin{enumerate}[I.]
\setlength{\itemsep}{0pt}
\setlength{\parskip}{1pt}
\item Nash equilibria are stationary.
\item If an interior solution orbit converges, its limit is Nash.
\item If a point is Lyapunov stable, then it is also Nash.
\item A point is asymptotically stable if and only if it is a strict equilibrium.
\end{enumerate}

Accordingly, our aim in this section will be to extend the above in the context of the higher order dynamics \eqref{eq.nRD}.
To that end however, it is important to recall that the higher order playing field is fundamentally different because the choice of an initial strategy profile $x(0)\in\strat$ does not suffice to determine the evolution of \eqref{eq.nRD};
instead, one must prescribe the full initial state $\omega(0) = (x(0),\dot x(0),\dotsc, x^{(n-1)}(0))$ in the system's phase space $\phase$.
Regardless, a natural way to discuss the stability of initial \emph{points} $q\in\strat$ is via the corresponding \emph{rest states} $(q,0,\dotsc,0)\in\phase$ (recall also the relevant discussion in Section \ref{sec.prelims.dynamics}, Section \ref{sec.dynamics}, and the remarks following Theorem \ref{thm.dom.weak}).
With this in mind, we will say that $q\in \strat$ is \emph{stationary} (resp. \emph{Lyapunov stable}, resp. \emph{attracting}) when the associated rest state $(q,0,\dotsc,0)\in\phase$ is itself stationary (resp. Lyapunov stable, resp. attracting).

In spite of these differences, we have:

\begin{theorem}
\label{thm.folk}
Let $x(t)$ be a solution orbit of the $n$-th order replicator dynamics \eqref{eq.nRD}, $n\geq1$, for a normal form game $\game\equiv \game(\play,\act,u)$, and let $q\in\strat$. Then:
\begin{enumerate}[\upshape I.]
\setlength{\itemsep}{0pt}
\setlength{\parskip}{1pt}

\item
$x(t)=q$ for all $t\geq0$ iff $q$ is a restricted equilibrium of $\game$ (i.e. $u_{k\alpha}(q)=\max\{u_{k\beta}(q): q_{k\beta}>0\}$ whenever $q_{k\alpha}>0$).

\item
If $x(0)\in\intstrat$ and $\lim_{t\to\infty} x(t) = q$, then $q$ is a Nash equilibrium of $\game$.

\item
If every neighborhood $U$ of $q$ in $\strat$ admits an interior orbit $x_{U}(t)$ such that $x_{U}(t)\in U$ for all $t\geq0$, then $q$ is a Nash equilibrium of $\game$.

\item
\label{itm.strict}
Let $q$ be a strict equilibrium of $\game$.
For every neighborhood $U$ of $q$ in $\strat$, there exists a neighborhood $V$ of $q$ in $\strat$ and an open set $W\subseteq\phase$ containing $V\exclude\{(q,0,\dotsc,0)\}$ such that $x(t)\in U$ and $x(t) \to q$ for all initial states $(x(0),\dot x(0),\dotsc)\in W$;
put differently, for every $x\in\strat$ sufficiently close to $q$, we can find a neighborhood $V_{x}$ of $(x,0,\dotsc,0)$ in $\phase$ such that $x(t)$ converges to $q$ for all $(x(0), \dot x(0),\dotsc) \in V_{x}$, but the bound on $\dot x(0)$, $\ddot x(0)$, etc. will depend on $x(0)$.
Conversely, only strict equilibria have this property.
%
\end{enumerate}
As an immediate corollary of \eqref{itm.strict}, we also have:
\begin{enumerate}
\item[\upshape IV$'$.]
If $q$ is a strict equilibrium of $\game$, then it is attracting:
there exists a neighborhood $U$ of $q$ in $\strat$ such that $x(t)\to q$ whenever $x(t)$ starts at rest in $U$ (that is, $x(0)\in U$ and $\dot x(0) = \dotsc = 0$);
conversely, only strict equilibria have this property.
\end{enumerate}
\end{theorem}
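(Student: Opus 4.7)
The plan divides the four parts. For Part I, substituting $x(t) \equiv q$ into \eqref{eq.nRD} kills every higher-order derivative, so each adjustment $R^{n-1}_{k\alpha}$ of \eqref{eq.adjustment} vanishes and the dynamics reduce to $\lambda_{k} x_{k\alpha}(u_{k\alpha}(q) - u_{k}(q)) = 0$, which is exactly the restricted equilibrium condition on $\supp(q_{k})$. Conversely, if $q$ is a restricted equilibrium, then interpreted on the face it lies in (per Remark \ref{rem.invariance}), the constant orbit $x(t) \equiv q$ solves \eqref{eq.nRD} from the rest state $(q, 0, \dotsc, 0)$, and uniqueness of solutions to \eqref{eq.nLD} finishes the job.

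Parts II and III succumb to a single Lyapunov-style argument against cross-ratios. Suppose $q$ is not Nash, so some player $k$ has a pure action $\alpha$ with $u_{k\alpha}(q) > u_{k}(q)$; since $u_{k}(q) = \sum_{\gamma} q_{k\gamma} u_{k\gamma}(q)$, one can then pick $\beta \in \supp(q_{k})$ with $u_{k\beta}(q) < u_{k\alpha}(q)$. Let $V(x) = \log x_{k\alpha} - \log x_{k\beta}$; by \eqref{eq.Gibbs} this equals $\lambda_{k}(y_{k\alpha} - y_{k\beta})$ along orbits, so by \eqref{eq.nLD}, $V(x(t))$ has $n$-th time derivative $\lambda_{k}(u_{k\alpha}(x(t)) - u_{k\beta}(x(t)))$. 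Continuity furnishes a neighborhood of $q$ on which this derivative is bounded below by some $c > 0$, so $n$-fold integration forces $V(x(t)) \to +\infty$ whenever the orbit remains trapped there. For Part II this trapping follows from $x(t) \to q$; for Part III it follows from the hypothesis that every neighborhood admits a trapped interior orbit. In both cases $V$ is bounded above near $q$ (because $q_{k\beta} > 0$ keeps $\log x_{k\beta}$ bounded below, while $\log x_{k\alpha} \leq 0$), giving the contradiction.

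For the forward direction of Part IV, write the strict equilibrium $q = (\alpha_{1}^{\ast}, \dotsc, \alpha_{N}^{\ast})$ and use strictness to pick $c > 0$ and a neighborhood $U_{0} \ni q$ on which $u_{k\alpha_{k}^{\ast}}(x) - u_{k\beta}(x) \geq c$ for every $\beta \neq \alpha_{k}^{\ast}$. I switch to score space by setting $w_{k\beta}(t) = y_{k\alpha_{k}^{\ast}}(t) - y_{k\beta}(t)$; by \eqref{eq.Gibbs}, membership $x(t) \in U_{0}$ is equivalent to all $w_{k\beta}(t)$ exceeding some threshold $M_{0}$, while \eqref{eq.nLD} gives $w_{k\beta}^{(n)}(t) \geq c$ so long as $x(t) \in U_{0}$. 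Taylor's formula with Lagrange remainder then yields $w_{k\beta}(t) \geq \sum_{r=0}^{n-1} w_{k\beta}^{(r)}(0)\, t^{r}/r! + c\, t^{n}/n!$ on any subinterval of confinement, so I define $W \subseteq \phase$ as the open set of initial data for which the right-hand side minorant stays above $M_{0}$ for every $t \geq 0$ and every $(k, \beta)$. This set contains every rest state $(x, 0, \dotsc, 0)$ with $x$ sufficiently close to $q$ (since then $w_{k\beta}(0)$ is automatically large), and at fixed $x(0)$ its cross-section is a tolerance ball on the higher-order derivatives whose radius shrinks as $x(0)$ recedes from $q$, exactly matching the statement. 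A standard bootstrap then shows orbits launched from $W$ stay in $U_{0}$ for all time, whence $w_{k\beta}(t) \to +\infty$ and $x(t) \to q$.

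The converse is the step I expect to be the main obstacle. By the attracting property, orbits from interior rest states close to $q$ converge to $q$, so Part II forces $q$ to be Nash. If $q$ is not strict, I split into two cases. If $q$ is pure with a tied best response $\beta \neq \alpha_{k}^{\ast}$ for some player $k$, then since payoffs are linear in each player's own mix, $u_{k\alpha_{k}^{\ast}}(q_{-k}) = u_{k\beta}(q_{-k})$; consequently any perturbation $q^{\eps} = q - \eps\, e_{k\alpha_{k}^{\ast}} + \eps\, e_{k\beta}$ remains a restricted equilibrium of $\game$, so by Part I its orbit is frozen at $q^{\eps} \neq q$, contradicting attraction. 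If $q$ is mixed, it lies in the relative interior of the face $F$ spanned by $\supp(q)$, and the attracting property descends to the restricted game on $F$, producing an interior attractor there; higher order imitative dynamics forbid interior attractors for $n \geq 2$ (Section \ref{sec.extensions}), and for $n = 1$ this is ruled out by the classical multi-population folk theorem. The delicate technical point in both branches is to check that the restricted-face dynamics genuinely inherit the higher-order attracting structure; this is what I would handle by observing that the restriction commutes with the Gibbs map along trajectories originating in $F$, so the induced phase space of the restricted system embeds cleanly into $\phase$.
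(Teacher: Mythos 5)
Your proposal is correct and, for Parts I--III and the forward direction of Part IV, it follows essentially the same route as the paper: stationarity via the vanishing of the adjustment terms $R^{n-1}_{k\alpha}$ at constant solutions, the cross-ratio Lyapunov function $\log x_{k\alpha}-\log x_{k\beta}$ whose $n$-th derivative is a payoff difference bounded away from zero near a non-Nash point, and a Taylor--Lagrange confinement argument in relative score space (your $w_{k\beta}$ are just the negatives of the paper's $z_{k\mu}$, and your ``minorant stays above $M_0$'' condition is the paper's requirement that the polynomial part of the Taylor bound never exceed $\log 2$). The one genuine divergence is in the converse of Part IV. Both you and the paper dispose of non-pure $q$ by invoking the incompressibility result (Theorem \ref{thm.nonconv}) on the face spanned by $\supp(q)$; but for a pure, non-strict $q$ the paper extends the edge-dynamics argument of Ritzberger and Weibull (examining the induced dynamics on the edge joining $q$ to a tied vertex $q'$), whereas you observe that any tie $u_{k\beta}(q)=u_{k\alpha_k^*}(q)$ produces a one-parameter family of restricted equilibria $q^{\eps}$ accumulating at $q$, each of which is stationary from rest by Part I and therefore cannot converge to $q$. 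Your perturbation argument is arguably more self-contained, since it recycles Part I rather than importing an external dynamical analysis of edge flows, and it makes transparent exactly which rest states in the punctured neighborhood $V\exclude\{(q,0,\dotsc,0)\}$ witness the failure of attraction; the paper's edge argument, on the other hand, generalizes more readily to the payoff-monotonic dynamics of Section \ref{sec.extensions}, where Part I takes a slightly different form. Two minor points you should make explicit in a final write-up: the threshold description of $U_0$ should be taken as the definition of the neighborhood (the sets $\{x: w_{k\beta}>M_0 \text{ for all } k,\beta\}$ form a neighborhood basis of $q$, so nothing is lost), and non-interior initial conditions near $q$ need the restriction convention of Remark \ref{rem.invariance}, as the paper notes in a footnote.
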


The basic intuition behind Theorem \ref{thm.folk} is as follows:
First, stationarity is a trivial consequence of the replicator-like term of the dynamics \eqref{eq.nRD}.
Parts II and III follow by noting that if a trajectory eventually spends all time near a stationary point $q$, then this point must be Nash \textendash\ otherwise, the forces of \eqref{eq.nLD} would drive the orbit away.
Finally, convergence to strict equilibria is a consequence of the fact that they are locally dominant, so the payoff-driven forces \eqref{eq.nLD} point in their direction.
However, before making these ideas precise, it will be important to draw the following parallels between Theorem \ref{thm.folk} and the standard first order folk theorem:

\vspace{.5em}

\noindent
\textbf{Parts I and II}
of Theorem \ref{thm.folk} are direct analogues of the corresponding first order claims;
note however that (II) can now be inferred from (III).

\vspace{.5em}

\noindent
\textbf{Part III}
is a slightly stronger assertion than the standard statement that Lyapunov stability implies Nash equilibrium in first order:
indeed, Lyapunov stability posits that all trajectories which start close enough will remain nearby, whereas Theorem \ref{thm.folk} only asks for one such trajectory.
Actually, this last property is all that is required for the proof of the corresponding part of the first order folk theorem as well, but since there are cases which satisfy the latter property but not the former,%
\footnote{For instance, the equilibrium profile $q_{1} = (1,0)$, $q_{2} = (1/2,1/2)$ of the simple $2\times2$ game with payoff matrices $U_{1} = \mathbf{I}$ and $U_{2}=\mathbf{0}$ is neither Lyapunov stable under the replicator dynamics, nor an $\omega$-limit of an interior trajectory, but it still satisfies the property asserted in Part III of Theorem \ref{thm.folk}.}
we will use this stronger formulation which seems closer to a ``bare minimum'' characterization of Nash equilibria (especially in higher orders).

\vspace{.5em}

\noindent
\textbf{Part IV}
shows that strict equilibria attract all nearby rest states $(x(0),0,\dotsc,0)\in\phase$, but it is not otherwise tantamount to higher order asymptotic stability \textendash\ it would be if $W$ were a neighborhood of $V$ in $\phase$ (or, equivalently, of $(q,0,\dotsc,0)$ in $\phase$) instead of $V\exclude\{(q,0,\dotsc,0)\}$.%
\footnote{We thank Josef Hofbauer for this remark.}


This difference between first and higher orders is intimately tied to the bias that higher order initial conditions (such as $\dot y(0)$, $\ddot y(0)$, etc.) introduce in the learning scheme \eqref{eq.nLD}.
More precisely, recall that a nonzero initial score velocity $\dot y(0)$ skews the learning scheme \eqref{eq.nLD} to such an extent that it might end up converging to an arbitrary pure strategy even in the absence of external stimuli
(viz. in a constant game; cf. the relevant discussion at the end of Section \ref{sec.2RD.2st}).
This behavior is highly unreasonable and erratic, so players are more likely to adhere to an unbiased version of \eqref{eq.nLD} with $\dot y(0) = \ddot y(0) = \dotso = 0$.
In that case however, Fa\`a di Bruno's chain rule shows that we will also have $\dot x(0) = \ddot x(0) = \dotso = 0$ in \eqref{eq.nRD}, so Part IV$'$ of Theorem \ref{thm.folk} allows us to recover the first order statement to the effect that all nearby initial strategy choices are attracted to $q$.
Similarly, if we consider the mass-action interpretation of \eqref{eq.nRD} that we put forth in Section \ref{sec.evolution}, players who are unbiased in the calculation of their aggregate payoffs will also have $\dot x(0) = \dots = 0$, so Part IV$'$ of the theorem essentially boils down to the first order asymptotic stability result in that case.

That said, it is also important to note that this convergence statement remains true even if the players' higher order learning bias $\dot y(0), \ddot y(0),\dotsc,$ is nonzero but (uniformly) not too large.
To wit, assume without loss of generality that the strict equilibrium $q$ under scrutiny corresponds to everyone playing their ``0''-th strategy, and consider the associated relative score variables
\begin{equation}
\label{eq.scorediffs}
z_{k\mu} = y_{k\mu} - y_{k,0},\quad
\mu\in\act_{k}^{\ast}\equiv\act_{k}\exclude\{0\}.
\end{equation}
As can be easily seen, these score differences are mapped to strategies $x\in\strat$ via the \emph{reduced Gibbs map}
$G_{k}^{\ast}\from\R^{\act_{k}^{\ast}}\to\strat_{k}$, with $z_{k}\in\R^{\act_{k}^{\ast}} \mapsto G_{k}^{\ast}(z_{k})\in\strat_{k}$ as follows:
\begin{align}
\label{eq.Gibbs.reduced}
\tag{\ref*{eq.Gibbs}$^{\ast}$}
\txs
G_{k,0}^{\ast}(z)
= \left(1 + \insum_{\nu}^{k} e^{z_{k\nu}}\right)^{-1},
&&
\txs
G_{k\mu}^{\ast}(z)
= e^{z_{k\mu}} \left(1 + \insum_{\nu}^{k} e^{z_{k\nu}}\right)^{-1}.
\end{align}
More specifically, if the relative scores $z_{k\mu}$ are given by \eqref{eq.scorediffs}, we will have $G_{k}^{\ast}(z_{k}) = G_{k}(y_{k})$, so the learning scheme \eqref{eq.nLD} will be equivalent to the \emph{relative score dynamics}
\begin{equation}
\label{eq.nZD}
\tag{ZD$_{\text{n}}$}
z_{k\mu}^{(n)} = u_{k\mu}(x) - u_{k,0}(x),
\end{equation}
with (reduced) logit choice $x_{k} = G_{k}^{\ast}(z)$.
In this formulation, the proof of Theorem \ref{thm.folk} shows that if players start with sufficiently negative $z_{k\mu}(0)$ and their learning bias $\dot z_{k\mu}(0), \ddot z_{k\mu}(0),\dotsc,$ does not exceed some \emph{uniform} $M>0$, then the relative scores $z_{k\mu}$ will escape to $-\infty$.
In other words, we will have $x(t)\to q$ whenever $x(0)$ is sufficiently close to $q$ and the players' initial learning bias (which is what players use to update \eqref{eq.nLD} anyway) is uniformly small.%
\footnote{The reason that this reasoning does not apply to nonzero initial strategy growth rates $\dot x(0)$ etc. may be seen in a simple $2$-strategy context as follows:
by \eqref{eq.xzdep} we will have $\dot x = x (1 - x) \dot z$, so a uniform bound on $\dot x$ does not correspond to a uniform bound on $z$.
In particular, if players start with a finite initial score velocity $\dot z(0)$ near the boundary of $\strat$, then this will correspond to a vanishingly small initial strategy growth rate $\dot x(0)$;
conversely, nonzero $\dot x(0)$ with $x(0)$ arbitrarily near $\bd(\strat)$ implies an arbitrarily large initial learning bias $\dot z(0)$.

Similar conclusions apply to the revision formulation \eqref{eq.MD2.der} of the higher order replicator dynamics with offset aggregate payoffs $U_{k\alpha}(t) = U_{k\alpha}(0) + \int_{0}^{t} u_{k\alpha}(x(s)) \dd s$.
Indeed, if the initial offsets $U_{k\alpha}(0)$ are uniformly small in \eqref{eq.MD2.der}, then the corresponding initial conditions $\dot x_{k\alpha}(0)$ will scale with $x_{k\alpha}(0)$;
it thus makes little evolutionary sense to ask for convergence under a \emph{uniform} bound on the players' initial velocities $\dot x_{k\alpha}(0)$ when $x_{k\alpha}(0)$ is itsellf small.}

\begin{remark*}
Using the extended real arithmetic operations for $-\infty$, the reduced Gibbs map \eqref{eq.Gibbs.reduced} maps $(-\infty,\dotsc,-\infty)$ to $q$.
Interestingly, by adjoining $(-\infty,\dotsc,-\infty)$ to $\prod_{k}\R^{\act_{k}^{\ast}}$ in a topology which preserves the continuity of $G_{k}^{\ast}$, the statement above implies that this ``point at negative infinity'' is asymptotically stable in \eqref{eq.nZD} \textendash\ for a detailed statement and proof, see \ref{app.folk}.
\end{remark*}


\begin{figure}[t]
\centering
\subfigure{
\label{subfig.folk.portrait}
\includegraphics[width=0.46\columnwidth]{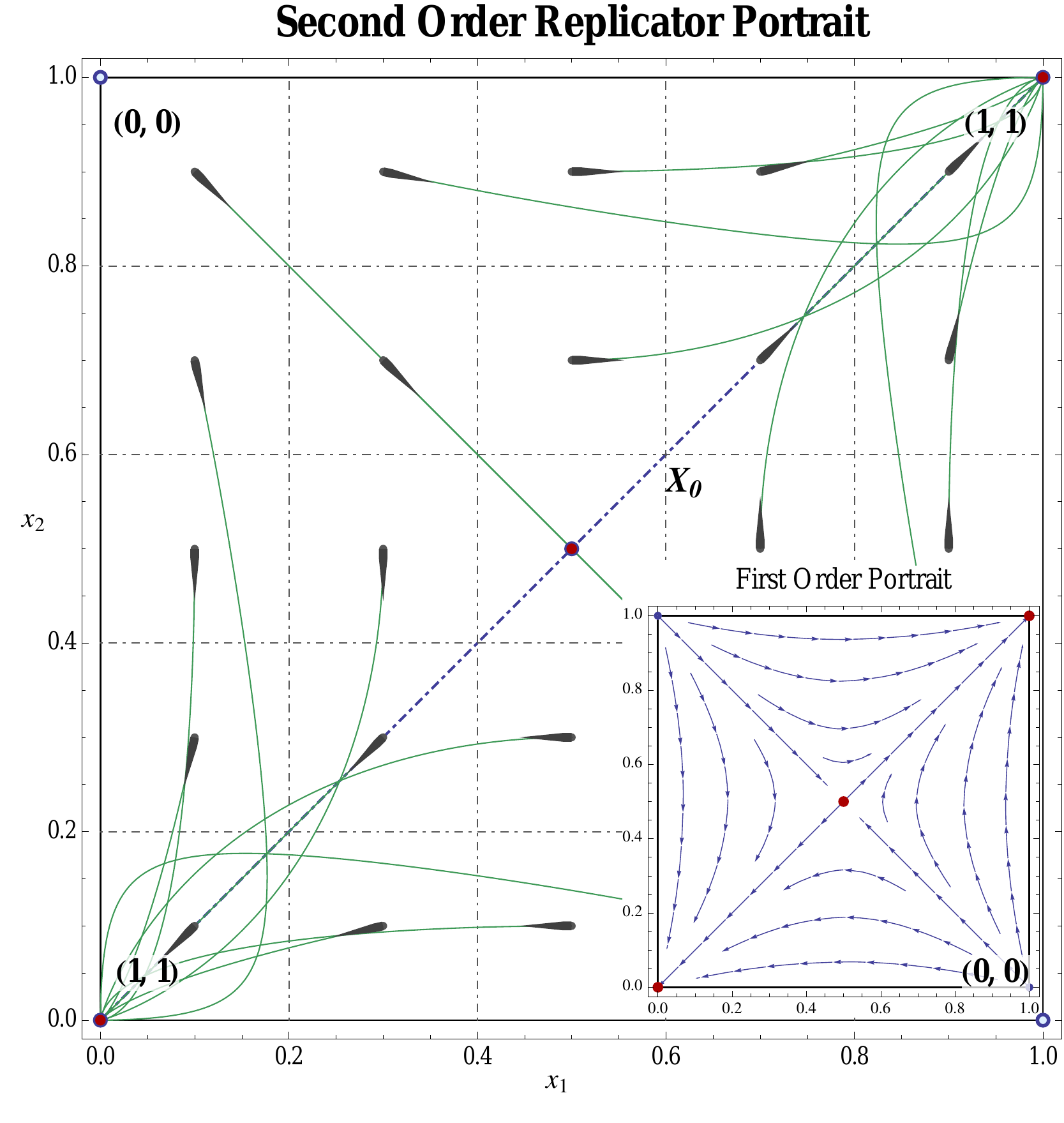}}
\hfill
\subfigure{
\label{subfig.folk.phase}
\includegraphics[width=0.465\columnwidth]{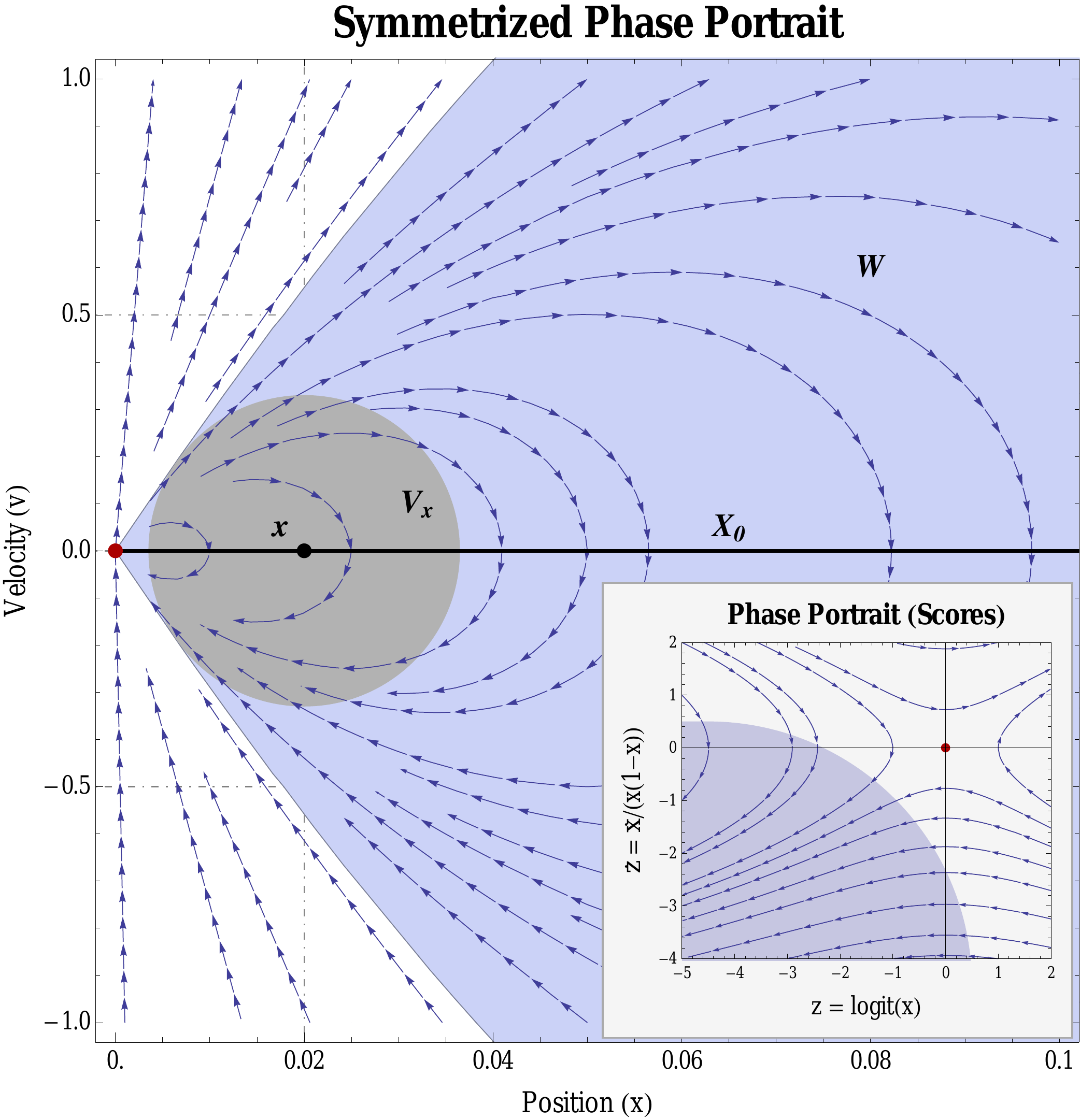}}
\caption{
\footnotesize
Second order replicator trajectories in a $2\times2$ coordination game with payoff matrices $U_{1} = U_{2} = I$.
Fig.~\ref{subfig.folk.portrait} shows the phase portraits for the first and second order replicator dynamics, while Fig.~\ref{subfig.folk.phase} shows the restriction of the game's phase space to the symmetric invariant manifold $\strat_{0}$ which joins the game's equilibria.
For every symmetric initial point $(x,x)$ near $q=(0,0)$, there exists a neighborhood of initial states $V_{x}$ (gray) such that all orbits starting in $V_{x}$ stay close and eventually converge to $q$.
The union $W$ of these neighborhoods (light blue) is not itself a neighborhood of $q$ in $\phase_{0}\equiv\phase(\strat_{0})$, so $q$ is not asymptotically stable in \eqref{eq.nRD};
however, in terms of the score variables $z=\log \frac{x}{1-x}$, $\dot z = \dot x\big/x(1-x)$, the corresponding point at negative infinity $(-\infty,\dotsc,-\infty)$ attracts all nearby initial states (inlay).}
\label{fig.folk}
\end{figure}

To prove Theorem \ref{thm.folk}, we begin with a quick technical lemma:

\begin{lemma}
\label{lem.diffeo}
The reduced Gibbs map $G_{k}^{\ast}\from\R^{\act_{k}^{\ast}}\to\strat_{k}$ of \eqref{eq.Gibbs.reduced} is a diffeomorphism onto its image.
\end{lemma}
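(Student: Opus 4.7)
The plan is to prove the claim by exhibiting an explicit smooth inverse; there is no substantive obstacle, so the argument is essentially a bookkeeping exercise.

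First, I would identify the image of $G_{k}^{\ast}$. Since every component $G_{k,0}^{\ast}(z)$ and $G_{k\mu}^{\ast}(z)$ is strictly positive and the components sum to $1$ by construction, the image lies inside the relative interior $\Int(\strat_{k}) = \{x\in\strat_{k} : x_{k\alpha} > 0 \text{ for all } \alpha\in\act_{k}\}$. Conversely, for any $x\in\Int(\strat_{k})$, setting $z_{k\mu} = \log x_{k\mu} - \log x_{k,0}$ and using $x_{k,0} + \sum_{\nu}^{k} x_{k\nu} = 1$ immediately recovers $x$ under $G_{k}^{\ast}$, so the image is exactly $\Int(\strat_{k})$.

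Second, I would define the candidate inverse $H_{k}\from\Int(\strat_{k})\to\R^{\act_{k}^{\ast}}$ by
\begin{equation*}
H_{k\mu}(x) = \log x_{k\mu} - \log x_{k,0}, \qquad \mu\in\act_{k}^{\ast}.
\end{equation*}
Since $\log$ is smooth on $(0,\infty)$ and $x_{k\alpha}>0$ on $\Int(\strat_{k})$, the map $H_{k}$ is smooth. Smoothness of $G_{k}^{\ast}$ itself is clear by inspection (composition of exponentials with a nowhere-vanishing rational denominator).

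Third, I would verify that $H_{k}$ is a two-sided inverse. For the composition $H_{k}\circ G_{k}^{\ast}$, a direct computation gives
\begin{equation*}
\log G_{k\mu}^{\ast}(z) - \log G_{k,0}^{\ast}(z) = z_{k\mu},
\end{equation*}
since the common denominator $1+\sum_{\nu}^{k} e^{z_{k\nu}}$ cancels. For the reverse composition $G_{k}^{\ast}\circ H_{k}$, starting from $x\in\Int(\strat_{k})$ and setting $z = H_{k}(x)$, one has $e^{z_{k\mu}} = x_{k\mu}/x_{k,0}$, so
\begin{equation*}
G_{k\mu}^{\ast}(z) = \frac{x_{k\mu}/x_{k,0}}{1 + \sum_{\nu}^{k} x_{k\nu}/x_{k,0}} = \frac{x_{k\mu}}{x_{k,0} + \sum_{\nu}^{k} x_{k\nu}} = x_{k\mu},
\end{equation*}
and analogously $G_{k,0}^{\ast}(z) = x_{k,0}$. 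Thus $G_{k}^{\ast}$ and $H_{k}$ are mutually inverse smooth maps between $\R^{\act_{k}^{\ast}}$ and $\Int(\strat_{k})$, so $G_{k}^{\ast}$ is a diffeomorphism onto its image. (Alternatively, one could compute the Jacobian of $G_{k}^{\ast}$ directly and show it is nonsingular at every $z$, but producing $H_{k}$ explicitly is cleaner and will be useful later when rewriting \eqref{eq.nRD} in terms of the relative scores $z_{k\mu}$.)
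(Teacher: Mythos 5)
Your proof is correct and follows the same route as the paper: both exhibit the explicit inverse $z_{k\mu}=\log(x_{k\mu}/x_{k,0})$ on the relative interior of $\strat_{k}$ and observe that all maps involved are smooth. Yours simply spells out the verification that the paper leaves as ``easy to check.''
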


\begin{proof}
It is easy to check that the expressions $z_{k\mu} = \log(x_{k\mu}/x_{k,0})$ provide an inverse to $G_{k}^{\ast}$ for $x_{k}\in\mathrm{rel\,int}(X_{k})$;
the claim then follows by noting that all expressions involved are smooth.
\end{proof}

\begin{proof}[Proof of Theorem \ref{thm.folk}]
We will begin with stationarity of restricted equilibria.
Since the payoff term of \eqref{eq.nRD} does not contain any higher order derivatives, it will vanish at $q\in\strat$ if and only if $u_{k\alpha}(q) = u_{k}(q)$ for all $\alpha\in\supp(q)$, implying that $q$ is a restricted equilibrium.
Conversely, let $q$ be a Nash equilibrium in the restriction $\game'\equiv\game(\play,\strat',u_{k}|_{\strat'})$ of $\game$ with $\act_{k}' = \supp(q_{k})$.
Then, with $u_{k\alpha}(q) = u_{k\beta}(q)$ for all $\alpha,\beta\in\act_{k}'$, the updating scheme \eqref{eq.nLD} constrained to $\game'$ and starting at $q$ also gives $y_{k\alpha}^{(n)}(0) = y_{k\beta}^{(n)}(0)$ for all $\alpha,\beta\in\act_{k}'$.
So, if \eqref{eq.nRD} starts at $q$ with initial motion rates $\dot x(0) = \ddot x(0) = \dotsm = 0$, we will have $y_{k\alpha}(t) - y_{k\alpha}(0) = y_{k\beta}(t) - y_{k\beta}(0)$ for all $\alpha,\beta\in\act_{k}'$, and, by the homogeneity of the Gibbs map ($G(y_{1}+c, y_{2}+c,\dotsc) = G(y_{1},y_{2},\dotsc)$ for all $c\in\R$), we readily obtain $x(t) = q$ for all $t$, i.e. $q$ is stationary.%
\footnote{Importantly, Nash equilibria are not stationary in \eqref{eq.nLD}:
orbits that are parallel to the line $(t,\dotsc,t)$ in $\R^{\act}$ are stationary in \eqref{eq.nRD}.}

\smallskip

We now turn to Part (III) of the theorem \textendash\ which will also prove Part (II).
To that end, suppose that every neighborhood $U$ of $q$ in $\strat$ admits an interior orbit $x(t)$ that stays in $U$ for all $t\geq0$;
we then claim that $q$ is Nash.
Indeed, assume instead that for some $k\in\play$, there exists $\beta\in\act_{k}$ and $\alpha\in\supp(q_{k})$ with $u_{k\alpha}(q) < u_{k\beta}(q)$.
Then, pick $\eps>0$ and a neighborhood $U$ of $q$ such that $x_{k\alpha} > q_{k\alpha}/2>0$ and $u_{k\beta}(x) \geq u_{k\alpha}(x) + \eps$ for all $x\in U$.
By assumption, there exists an interior orbit $x(t)$ which stays in $U$ for all time, so, for the associated score variables $y(t)$, we will have:
\[
y_{k\beta}^{(n)}(t) - y_{k\alpha}^{(n)}(t)
= u_{k\beta}(x(t)) - u_{k\alpha}(x(t))
\geq \eps > 0.
\]
This last inequality immediately implies that $\log \big(x_{k\beta}(t)/x_{k\alpha}(t) \big) \to +\infty$, contradicting the fact that $x_{k\alpha}(t)>q_{k\alpha}/2$ for all $t\geq0$.

\smallskip

With regards to Part (IV), let $q = (e_{1,0},\dotsc,e_{N,0})$ be a strict equilibrium of $\game$, and consider the relative scores $z_{k\mu} = y_{k\mu} - y_{k,0}$, $\mu\in\act_{k}^{*} \equiv \act_{k}\exclude\{0\}$ of \eqref{eq.scorediffs}.
Since the reduced Gibbs map $G_{k}^{*}\from\R^{\act_{k}^{*}}\to\strat_{k}$ of \eqref{eq.Gibbs.reduced} is a diffeomorphism onto its image by Lemma \ref{lem.diffeo}, the same will hold for the direct sum $G^{*}\equiv\bigoplus_{k} G_{k}^{*}\from \prod_{k} \R^{\act_{k}^{*}}\to \strat$ as well.
Accordingly, if we take a neighborhood $U_{\eps}$ of $q$ in $\strat$ of the form $U_{\eps} = \{x\in\Int(\strat): x_{k,0} > 1-\eps,\,k\in\play\}$, its preimage under $G^{*}$ will be the set $V_{h} = \{z:\pf_{k,0} < h,\,k\in\play\}$  where $\pf_{k,0} = \insum_{\nu}^{k} \exp(z_{k\nu})$ and $h = (1-\eps)^{-1} - 1$ ($\approx\eps$ for small $\eps$).
We will show that if $h$ is chosen small enough, then there exists $\delta>0$ such that whenever a solution $z(t)$ of \eqref{eq.nZD} starts at $z(0) \in V_{h}$ with $\|z^{(r)}(0)\|< \delta$ for $r=1,\dotsc,n-1$, we will have $z(t) \in V_{2h}$ for all $t\geq0$ and $z_{k\mu}(t)\to -\infty$ for all $\mu\in\act_{k}^{*}$, $k\in\play$.
Since $G^{*}$ is a diffeomorphism onto its image and $x\to q$ iff $z_{k\mu}\to-\infty$ for all $\mu\in\act_{k}^{*}$, $k\in\play$, this will establish the ``if'' direction of our claim.%
\footnote{Non-interior trajectories can be handled similarly by looking at an appropriate restriction $\game'$ of $\game$.}

Indeed, let $z(t)$ be a solution of \eqref{eq.nZD} starting in $V_{h}$ and let $\tau_{2h} = \inf\{t:z(t)\notin V_{2h}\}$ be the time it takes $z(t)$ to escape from $V_{2h}$ (with the usual convention $\inf(\varnothing) = \infty$).
Then, if $h$ is taken small enough, there will be some constant $M>0$ such that $u_{k,0}(x) - u_{k,\mu}(x) \geq M>0$ for all $x\in G^{\ast}(V_{2h})$ (recall that $q$ is a strict equilibrium).
In this way, for $t\leq \tau_{2h}$, Taylor's theorem with Lagrange remainder applied to \eqref{eq.nZD} readily gives:
\begin{equation}
\label{eq.zestimate}
z_{k\mu}(t)
\leq z_{k\mu}(0)
+ \insum_{r=1}^{n-1} z_{k\mu}^{(r)}(0) \, t^{r}\big/r! 
- M t^{n}\big/n!.
\end{equation}
Hence, pick $\delta>0$ such that the maximum of the polynomial $\insum_{r=1}^{n-1} z_{k\mu}^{(r)}(0) \, t^{r}\big/r! - M t^{n}\big/n!$ for $t\geq0$ is strictly smaller than $\log 2$ whenever $|z_{k\mu}^{(r)}(0)|<\delta$ for $\mu\in\act_{k}^{*}$, $k\in\play$, and $r=1,\dotsc n-1$.
This readily yields $\pf_{k,0}(\tau_{2h}) < \insum_{\mu}^{k} \exp\big(z_{k\mu}(0) + \log 2\big) = 2 \pf_{k,0}(0) < 2h$, i.e. $z(\tau_{2h}) \in V_{2h}$, a conclusion which cannot hold unless $\tau_{2h}=\infty$.
We thus obtain $z(t)\in V_{2h}$ for all $t\geq0$, so the limit of \eqref{eq.zestimate} as $t\to\infty$ gives $z_{k\mu}(t)\to-\infty$.

For the converse implication, it is easy to show that any vertex $q$ of $\strat$ which attracts an open neighborhood of initial rest states must also be a strict Nash equilibrium: extending the reasoning of \citet[Thm.~1]{RW95} to our higher order setting, it suffices to consider the evolution of the dynamics in the edge which joins $q=(\alpha_{k};\alpha_{-k})$ to a vertex $q' = (\alpha_{k}';\alpha_{-k})$ with $u_{k}(q') \geq u_{k}(q)$. However, Theorem \ref{thm.nonconv} shows that only a vertex $q\in\strat$ can attract an open set of initial states $\omega\in\phase$ containing a punctured neighborhood of $q$ in $\strat$, so our assertion follows.
\end{proof}

Now, with regards to the equilibration speed of the higher order dynamics, it can be shown that the rate of convergence to a strict equilibrium in the $n$-th order dynamics \eqref{eq.nRD} is $n$ orders as fast as in the first order regime.
More specifically, we have:
\begin{proposition}
\label{prop.convrate.strict}
Let $q = (e_{1,0},\dotsc,e_{N,0})$ be a strict Nash equilibrium of the finite game $\game$, and let $x(t)$ be a solution orbit of the replicator dynamics \eqref{eq.nRD} which starts at rest and close enough to $q$.
Then, there exists a positive constant $c>0$ such that:
\begin{equation}
\label{eq.rate.strict}
x_{k,0}(t) \sim 1- \exp\big(-c t^{n}/n! + \bigoh(t^{n-1})\big).
\end{equation}
\end{proposition}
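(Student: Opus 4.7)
The plan is to reduce the problem to the relative score dynamics \eqref{eq.nZD} already used in the proof of Part IV of Theorem \ref{thm.folk}, convert everything to score variables via the reduced Gibbs map \eqref{eq.Gibbs.reduced}, and then read off the strategy asymptotics by the Duhamel integral representation of $z_{k\mu}$. Concretely, let $z_{k\mu} = y_{k\mu} - y_{k,0}$ for $\mu\in\act_{k}^{\ast}$. Starting ``at rest'' means $\dot x(0)=\dotso=x^{(n-1)}(0)=0$; by Fa\`a di Bruno applied to the Gibbs map (used in the opposite direction), this forces $\dot z_{k\mu}(0)=\dotso=z_{k\mu}^{(n-1)}(0)=0$ as well. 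Since $q$ is a strict equilibrium, the proof of Theorem \ref{thm.folk}.IV already shows that for $x(0)$ sufficiently close to $q$ the orbit stays in an arbitrarily small neighbourhood $U$ of $q$ and converges to $q$.

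First I would integrate \eqref{eq.nZD} $n$ times using the vanishing initial derivatives to obtain the Duhamel-type identity
\begin{equation*}
z_{k\mu}(t)
\;=\; z_{k\mu}(0) - \frac{1}{(n-1)!} \int_{0}^{t} (t-s)^{n-1} \bigl[u_{k,0}(x(s)) - u_{k\mu}(x(s))\bigr] \dd s.
\end{equation*}
By continuity of the payoffs and strictness of $q$, if $U$ is taken small enough there exist constants $0<M_{-}\leq M_{+}$ with $M_{-}\leq u_{k,0}(x)-u_{k\mu}(x)\leq M_{+}$ for every $x\in U$ and every $\mu\neq 0$. Inserting these bounds into the Duhamel integral yields the two-sided estimate
\begin{equation*}
z_{k\mu}(0) - M_{+}\, t^{n}\big/n!
\;\leq\; z_{k\mu}(t) \;\leq\;
z_{k\mu}(0) - M_{-}\, t^{n}\big/n!,
\end{equation*}
which is already enough, via $1-x_{k,0}=\bigl(\insum_{\mu}^{k} e^{z_{k\mu}}\bigr)\big/\bigl(1+\insum_{\mu}^{k} e^{z_{k\mu}}\bigr)$, to conclude $1-x_{k,0}(t) \asymp \exp(-M t^{n}/n!)$ for some $M\in[M_{-},M_{+}]$; this already proves a version of \eqref{eq.rate.strict} with a suboptimal but positive constant.

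To sharpen $c$ to a definite value and push the error term into $\bigoh(t^{n-1})$, I would exploit the fact that $x(t)\to q$ (Theorem \ref{thm.folk}.IV), so the payoff gap $u_{k,0}(x(t))-u_{k\mu}(x(t))$ converges to $\Delta_{k\mu}^{\ast}\equiv u_{k,0}(q)-u_{k\mu}(q)>0$. Given $\eps>0$, split the Duhamel integral at some $T_{\eps}$ chosen so that $|u_{k,0}(x(s))-u_{k\mu}(x(s))-\Delta_{k\mu}^{\ast}|<\eps$ for $s\geq T_{\eps}$. The contribution of the transient part $[0,T_{\eps}]$ is a polynomial of degree at most $n-1$ in $t$, hence $\bigoh(t^{n-1})$, while the tail part equals $-\Delta_{k\mu}^{\ast} t^{n}/n! + \bigoh(\eps\, t^{n}) + \bigoh(t^{n-1})$. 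Letting $\eps\to 0$ after taking the exponential rate gives
\begin{equation*}
z_{k\mu}(t) = -\Delta_{k\mu}^{\ast}\, t^{n}\big/n! + \bigoh(t^{n-1}).
\end{equation*}
Feeding this into the reduced Gibbs map, the sum $\insum_{\mu}^{k} e^{z_{k\mu}(t)}$ is dominated by the term with smallest $\Delta_{k\mu}^{\ast}$, yielding
\begin{equation*}
\log\bigl(1-x_{k,0}(t)\bigr) = -c\, t^{n}\big/n! + \bigoh(t^{n-1}),
\qquad c \equiv \min_{\mu\neq 0}\Delta_{k\mu}^{\ast} > 0,
\end{equation*}
which is exactly \eqref{eq.rate.strict} after exponentiating.

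The main obstacle is precisely the sharpening step: the crude bound $[M_{-},M_{+}]$ depends on the neighbourhood $U$ and does not, by itself, identify the exponential rate. One needs the convergence $x(t)\to q$ (coming from Theorem \ref{thm.folk}.IV) in order to shrink $M_{-},M_{+}$ down to the common limiting value $\Delta_{k\mu}^{\ast}$, and then to verify that the error from replacing the time-varying integrand by its limit genuinely fits into $\bigoh(t^{n-1})$. This is a mildly self-referential bootstrap — one first needs the exponential convergence from Steps 1--2 to know the orbit stays in $U$, and only then can one use the tighter payoff estimate to promote the rate to the asymptotic constant $c$.
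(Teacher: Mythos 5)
Your first two steps are exactly the paper's proof: Theorem \ref{thm.folk}(IV) keeps the orbit in a neighbourhood where $u_{k,0}(x)-u_{k\mu}(x)\geq M_{-}>0$, so \eqref{eq.nZD} integrated $n$-fold from rest gives $z_{k\mu}(t)\leq z_{k\mu}(0)-M_{-}t^{n}/n!$, and the reduced Gibbs map converts this into \eqref{eq.rate.strict}; since the statement only asserts the existence of \emph{some} $c>0$, this already suffices. Your further sharpening of $c$ to $\min_{\mu\neq 0}\bigl(u_{k,0}(q)-u_{k\mu}(q)\bigr)$ goes beyond what the paper attempts, and as written it leaves an $\bigoh(\eps\,t^{n})$ error that is not $\bigoh(t^{n-1})$ for fixed $\eps$ — to close it you must feed the super-polynomial convergence $1-x_{k,0}(s)=\bigoh\bigl(e^{-M_{-}s^{n}/n!}\bigr)$ from your step 2 back into the Duhamel integral, which is precisely the bootstrap you allude to at the end.
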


\begin{proof}
If we choose a sufficiently small neighborhood of initial positions, Theorem \ref{thm.folk} shows that the payoff differences $u_{k,0}(x(t)) - u_{k,\mu}(x(t))$ will be bounded away from $0$ by some positive constant $c>0$ for all $\mu\in\act_{k}^{*}$, $k\in\play$ and for all $t\geq0$.
Hence, with $z_{k\mu}^{(n)} \leq -c<0$ by \eqref{eq.nZD}, our assertion follows from an $(n-1)$-fold application of the mean value theorem.
\end{proof}

\subsection{Dynamic instability of mixed equilibria.}

Theorem \ref{thm.folk} and Proposition \ref{prop.convrate.strict} above characterize the behavior of the $n$-th order replicator dynamics near strict equilibria from both a qualitative and a quantitative viewpoint;
on the flip side, they do not address mixed equilibria.
To study this issue, recall first that the standard asymmetric replicator dynamics preserve a certain volume form in the interior of $\strat$, so mixed equilibria cannot be attracting in first order.
\cite{RW95} establish this ``incompressibility'' property of the replicator dynamics by taking an ingenious extrinsic re\-pa\-ra\-me\-tri\-zation which makes the replicator dynamics divergence-free in the interior of $\strat$ (see also \citealp{RV90}).
On the other hand, by exploiting the interplay between the natural logarithm and the Gibbs map \eqref{eq.Gibbs}, \cite{Hof96} essentially showed that the replicator dynamics are incompressible in the space of the score variables $y_{k\alpha}$.
In the same spirit, we have:

\begin{proposition}
\label{prop.incompressibility}
The flow of the higher order learning dynamics \eqref{eq.nLD} preserves volume in the usual Euclidean geometry of $\R^{\act}$ for all $n\geq1$;
the same holds for \eqref{eq.nRD} w.r.t. a non-Euclidean volume form on the system's phase space $\phase$.
\end{proposition}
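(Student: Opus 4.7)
The strategy I would follow is to prove the first assertion by a direct Liouville computation on the phase space of \eqref{eq.nLD}, and then obtain the second by pushing the resulting Euclidean volume form through the (reduced) Gibbs diffeomorphism of Lemma \ref{lem.diffeo} onto $\phase$.

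For \eqref{eq.nLD}, I would recast the dynamics as a first-order system on $\R^{\act}\times(\R^{\act})^{n-1}$ with state $\eta = (y, y^{(1)},\dotsc, y^{(n-1)})$ and vector field $V(\eta) = \bigl(y^{(1)},\dotsc, y^{(n-1)}, u(G(y))\bigr)$. By Liouville's theorem, Euclidean volume preservation reduces to the vanishing of $\Div V$. The sum splits into $n$ blocks: for $i=0,\dotsc,n-2$ the $i$-th block contributes $\partial y_{k\alpha}^{(i+1)}/\partial y_{k\alpha}^{(i)} = 0$ because $y^{(i+1)}$ and $y^{(i)}$ are independent coordinates on the phase space, while the last block contributes $\partial u_{k\alpha}(G(y))/\partial y_{k\alpha}^{(n-1)} = 0$ since payoffs depend only on the base variable $y$. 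Hence $\Div V \equiv 0$ and the flow of \eqref{eq.nLD} preserves Lebesgue measure on its phase space.

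For \eqref{eq.nRD} I would switch to the reduced scores $z_{k\mu}$ of \eqref{eq.scorediffs}; the same Liouville argument applied to \eqref{eq.nZD}, which has the same triangular structure (the $n$-th derivative $u_{k\mu} - u_{k,0}$ depends only on the base variable $z$), shows that Euclidean volume on the reduced-score phase space is preserved. Lemma \ref{lem.diffeo} identifies this base space with $\mathrm{rel\,int\,}(\strat)$ via the diffeomorphism $G^{*}$, and the iterated Jacobians of $G^{*}$ lift it canonically to a diffeomorphism $\tilde G^{*}$ between the reduced-score phase space and the open dense subset of $\phase$ of interior configurations. Pushing Lebesgue measure forward by $\tilde G^{*}$ yields a smooth non-Euclidean volume form $\omega$ on $\phase$, whose invariance under \eqref{eq.nRD} follows from the fact that \eqref{eq.nRD} is, by construction (Section \ref{sec.dynamics}), the pushforward of \eqref{eq.nZD} under $\tilde G^{*}$.

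The main obstacle is verifying that $\tilde G^{*}$ genuinely intertwines \eqref{eq.nZD} and \eqref{eq.nRD} on their respective $n$-fold phase spaces, rather than only on the base coordinates. This is implicit in the Fa\`a di Bruno computation of Section \ref{sec.dynamics} that produced the adjustment term $R_{k\alpha}^{n-1}$: differentiating $\log x_{k\mu} - \log x_{k,0} = \lambda_{k} z_{k\mu}$ up to order $n$ expresses the higher derivatives of $x$ polynomially in the higher derivatives of $z$, and these expressions are precisely what defines the lift $\tilde G^{*}$. If one wants an explicit density for $\omega$, it can be read off from the Jacobian determinant of $\tilde G^{*}$, extending Hofbauer's first-order formula $\prod_{k,\alpha} x_{k\alpha}^{-1}\,dx$ to the higher-order derivative layers.
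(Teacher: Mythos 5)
Your proof is correct and takes essentially the same route as the paper's: the paper likewise recasts \eqref{eq.nLD} as the first-order system \eqref{eq.nGLD.order1}, observes that $y^{(r)}_{k\alpha}$ never appears in the equation for its own derivative so the divergence vanishes, and then transports the conclusion to \eqref{eq.nRD} through the diffeomorphism $G^{*}$ of Lemma \ref{lem.diffeo}. The only difference is that you make explicit the lift of $G^{*}$ to the full phase space and its intertwining of \eqref{eq.nZD} with \eqref{eq.nRD}, a point the paper leaves implicit.
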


\begin{proof}
Rewriting \eqref{eq.nLD.nsteps} in continuous time, \eqref{eq.nLD} is equivalent to the first order system:
\begin{flalign}
\label{eq.nGLD.order1}
\dot y_{k\alpha}^{n-1}(t) &= u_{k\alpha}(x(t)),\notag\\
			&\dotsm\\
\dot y_{k\alpha}^{0}(t)	&= y_{k\alpha}^{1}(t).\notag
\end{flalign}
Thus, given that $y_{k\alpha}^{r}$ does not appear in the equation for $\dot y_{k\alpha}^{r}$ for $r=0,\dotsc,n-1$, it follows that the flow of \eqref{eq.nLD} will be incompressible in the standard Euclidean metric of $\R^{\act}$.
Using the relative scores $z$ of \eqref{eq.scorediffs}, the same argument applies to the dynamics \eqref{eq.nZD}, and since $G^{*}$ is a diffeomorphism onto its image by Lemma \ref{lem.diffeo}, the result carries over to \eqref{eq.nRD} as well.
\end{proof}

Thanks to this incompressibility property of \eqref{eq.nLD} and \eqref{eq.nRD}, we have:

\begin{theorem}
\label{thm.nonconv}
In the higher order replicator dynamics \eqref{eq.nRD}, interior points cannot attract open sets of initial states;
only vertices of $\strat$ can be attracting.
More generally, a non-pure point $q\in\strat$ can only attract relatively open sets of initial states whose support in $\strat$ properly contains that of $q$.
\end{theorem}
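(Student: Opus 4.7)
The plan is a Liouville-type volume argument that leverages the incompressibility of the flow (Proposition~\ref{prop.incompressibility}). Let $q\in\strat$ be a non-pure strategy profile, set $\act_{k}'=\supp(q_{k})$, and let $\strat'=\prod_{k}\simplex(\act_{k}')$ be the face of $\strat$ in whose relative interior $q$ lies. Since the faces of $\strat$ are invariant under \eqref{eq.nRD} (Remark~\ref{rem.invariance}), any orbit whose initial $x$-support coincides with $\act'$ stays in $\strat'$ for all $t\geq0$. So it is enough to show that no open set of initial states in $\phase(\strat')$ can be attracted to the rest state $(q,0,\dotsc,0)$: granted this, the support of any attracted open set of initial states must strictly contain $\act'$, and in the special case of interior $q$ (so $\act'=\act$) no strictly larger support is available, whence interior points attract no open set at all; the attracting vertices arising in Theorem~\ref{thm.folk}~(IV) are the only ones left standing.

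Next I pass to reduced score coordinates within $\strat'$. Fixing a reference action $0_{k}\in\act_{k}'$ for each player, consider the relative scores $z_{k\mu}=y_{k\mu}-y_{k,0}$ for $\mu\in\act_{k}'\exclude\{0_{k}\}$. By Lemma~\ref{lem.diffeo} the reduced Gibbs map $G^{\ast}$ is a diffeomorphism from $\R^{d}$, with $d\equiv\sum_{k}(|\act_{k}'|-1)$, onto the relative interior of $\strat'$; componentwise differentiation lifts this to a diffeomorphism $\hat G^{\ast}$ from the reduced-score phase space $\R^{nd}$ onto the portion of $\phase(\strat')$ lying over $\mathrm{rel\,int}(\strat')$. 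Under $\hat G^{\ast}$, the dynamics \eqref{eq.nRD} restricted to $\strat'$ are conjugated to the relative-score dynamics \eqref{eq.nZD}, and the rest state $(q,0,\dotsc,0)$ corresponds to $(z^{\ast},0,\dotsc,0)$ with $z^{\ast}=(G^{\ast})^{-1}(q)$. By Proposition~\ref{prop.incompressibility}, the flow of \eqref{eq.nZD} on $\R^{nd}$ preserves the standard Euclidean volume.

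With this setup the contradiction is immediate. Suppose that some nonempty open $W\subset\phase(\strat')$ of initial states with support $\act'$ is attracted to $(q,0,\dotsc,0)$; then $W^{\ast}=(\hat G^{\ast})^{-1}(W)$ is a nonempty open subset of $\R^{nd}$ whose orbits all converge to $(z^{\ast},0,\dotsc,0)$. Choose $\eps>0$ so that the Euclidean ball $B_{\eps}((z^{\ast},0,\dotsc,0))$ has volume strictly less than $\mathrm{vol}(W^{\ast})$; by attraction there is a time $T$ with $\flow_{t}(W^{\ast})\subset B_{\eps}((z^{\ast},0,\dotsc,0))$ for all $t\geq T$, yet volume preservation forces $\mathrm{vol}(\flow_{t}(W^{\ast}))=\mathrm{vol}(W^{\ast})$, a contradiction. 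The main obstacle I anticipate is the bookkeeping in verifying that attraction in $\phase(\strat')$ passes through $\hat G^{\ast}$ to attraction in $\R^{nd}$ with all higher-order derivatives going to zero; this follows once one checks that $\hat G^{\ast}$ is a global diffeomorphism sending rest states to rest states, so that orbit convergence together with the convergence of all its successive derivatives transfers cleanly between the two phase spaces.
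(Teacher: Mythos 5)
Your overall architecture \textemdash\ reduction to the face spanned by $\supp(q)$, passage to the reduced score variables via $G^{\ast}$, and a Liouville-type contradiction from the volume preservation of Proposition~\ref{prop.incompressibility} \textemdash\ is exactly the paper's route. However, there is a genuine gap in the final step, and it is not the coordinate-transfer issue you flag. You open the contradiction with ``suppose $W$ is attracted to $(q,0,\dotsc,0)$,'' i.e.\ you assume from the outset that the \emph{full phase state} converges to the rest state. But the hypothesis of the theorem (and the way it is invoked in the converse direction of Theorem~\ref{thm.folk}, Part IV) is only that the \emph{configuration} converges: $x(t)\to q$, equivalently $z(t)\to z^{\ast}$. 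A priori this does not force $\dot z(t),\dotsc,z^{(n-1)}(t)\to 0$; and without that, the flow image of $W^{\ast}$ at large times is confined to a thin set only in the configuration directions, while the velocity directions may spread out to keep the volume constant, so no contradiction with volume preservation arises. Your closing remark that orbit convergence ``together with the convergence of all its successive derivatives transfers cleanly'' presupposes precisely the derivative convergence that has to be proved; the diffeomorphism $\hat G^{\ast}$ is not the obstacle.

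This missing implication is in fact the bulk of the paper's proof. There one argues (for $n=2$) that if $z_{k\mu}(t)\to z^{\ast}_{k\mu}$ but $\dot z_{k\mu}(t)\nrightarrow 0$, one can extract times $t_{m}\to\infty$ with $\dot z_{k\mu}(t_{m})\geq\eps$ and surrounding maximal intervals $J_{m}$ on which $\dot z_{k\mu}>\eps/2$; convergence of $z_{k\mu}$ forces the lengths of the $J_{m}$ to vanish, whence the mean value theorem produces points $\xi_{m}$ with $\ddot z_{k\mu}(\xi_{m})\to+\infty$, contradicting the fact that \eqref{eq.nZD} gives $\ddot z_{k\mu}(t)=u_{k\mu}(x(t))-u_{k,0}(x(t))\to 0$ (the limit must vanish because $z^{\ast}$ is a rest point). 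Supplying this step, or its $n$-th order analogue, would close your argument; the remainder of your proposal, including the reduction to faces and the deduction that only vertices can be attracting, matches the paper.
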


\begin{proof}
We will prove that if $q\in\Int(\strat)$, then there is no open set of initial conditions in $\phase$ that converges to $q$.
The result for general non-pure $q\in\strat$ will then follow by focusing on the face $\strat'$ of $\strat$ which is spanned by the support of $q$, i.e. $\strat'=\prod_{k} \simplex(\act_{k}')$ with $\act_{k}' = \supp(q_{k})$;
since the dynamics \eqref{eq.nRD} preserve the faces of $\strat$, the assertion follows by noting that the intersection of $\strat'$ with an open set in $\strat$ is open in $\strat'$ by definition.

Working with the variables $z$ of \eqref{eq.scorediffs} and recalling that the map $G^{*}\from z \mapsto x$ is a diffeomorphism onto its image by Lemma \ref{lem.diffeo}, Proposition \ref{prop.incompressibility} shows that open sets of initial states in $\phase$ cannot converge to the interior state $((G^{*})^{-1}(q),0,\dotsc,0)$.
Thus, to establish the theorem's claim that $z(t)$ cannot converge to the interior point $z^{*}\equiv(G^{*})^{-1}(q)$, it suffices to show that if $z(t)\to z^{*}$, then we would also have $\lim_{t\to\infty}\dot z(t) = \lim_{t\to\infty} \ddot z(t) = \dotsm = 0$.

For notational simplicity, we will only prove the case $n=2$.
To that end, assume for the purposes of establishing a contradiction that $z_{k\mu}(t) \to z_{k\mu}^{*}$ for some $\mu\in\act_{k}^{*}$, $k\in\play$, but that $\dot z_{k\mu}(t)\nrightarrow 0$.
Then, without loss of generality, there exists $\eps>0$ and an increasing sequence of times $t_{n}\to \infty$ such that $\dot z_{k\mu}(t_{n})\geq \eps$ for all $n$.
Hence, let $J_{n}$ be the largest open interval which contains $t_{n}$ and which is such that $\dot z_{k\mu} > \eps/2$ in $J_{n}$;
we then claim that the length $\delta_{n} = m(J_{n})$ of $J_{n}$ vanishes as $n\to\infty$.
Indeed, by passing to a subsequence of $t_{n}$ if necessary, assume that $\delta_{n}$ always exceeded some positive $\delta>0$;
then, with $\dot z_{k\mu}> \eps/2$ in $J_{n}$, it follows that $z_{k\mu}(t)$ would grow by at least $\eps\delta/2$ over $J_{n}$ for all $n$,
but since $z_{k\mu}(t)$ converges, every subsequence of $z_{k\mu}(t)$ must also be Cauchy, a contradiction.
Then, by the definition of $J_{n}$, we will have $\dot z_{k\mu} > \eps$ at some interior point of $J_{n}$ and $\dot z_{k\mu} = \eps/2$ at its endpoints;
thus, by the mean value theorem, there exists some $\xi_{n}\in J_{n}$ with $\ddot z_{k\mu}(\xi_{n}) \geq \eps\big/2\delta_{n}$, and hence, $\ddot z_{k\mu}(\xi_{n})\to +\infty$.
However, since $z_{k\mu}^{*}$ must also be a rest point of \eqref{eq.nZD}, the dynamics \eqref{eq.nZD} give $\ddot z_{k\mu}(t)\to 0$ as $t\to\infty$, a contradiction.
\end{proof}

The property that only vertices of $\strat$ can be attracting in the higher order replicator dynamics \eqref{eq.nRD} directly mirrors the first order case.
In the following section however, we will show that this is a property of a much more general class of higher order dynamics, so higher order considerations actually sharpen the instability of non-pure equilibria.

\section{Extensions: imitative and payoff-monotonic dynamics.}
\label{sec.extensions}

In this section, our aim is to provide several extensions of the higher order dynamics \eqref{eq.nLD} and \eqref{eq.nRD} and to show how the rationality analysis of the previous sections applies to this more general setting.
To that end, if players do not base the updating \eqref{eq.nLD} of their performance scores on the payoffs $u_{k\alpha}(x)$ of the game but on a different set of ``payoff observables'' $w_{k\alpha}\from\strat\to\R$ (assumed continuous), then we obtain the generalized reinforcement scheme
\begin{equation}
\label{eq.nGLD}
\tag{GLD$_{n}$}
y_{k\alpha}^{(n)} = w_{k\alpha}(x),
\end{equation}
which, coupled with the logit choice model \eqref{eq.Gibbs}, yields the generalized $n$-th order dynamics:
\begin{equation}
\label{eq.nGD}
\tag{GD$_{n}$}
x_{k\alpha}^{(n)}
= \lambda_{k} x_{k\alpha} \left(w_{k\alpha}(x) - \insum_{\beta}^{k} x_{k\beta} w_{k}(x)\right)
- x_{k\alpha} \big(R_{k\alpha}^{n-1} - \insum_{\beta}^{k} x_{k\beta} R_{k\beta}^{n-1}\big).
\end{equation}

The dynamics \eqref{eq.nGD} are characterized by the property that if $x(0)$ lies in a subface $\strat'$ of $\strat$, then $x(t)$ will remain in $\strat'$ for all time:
in other words, if the strategy share $x_{k\alpha}$ of a pure strategy $\alpha\in\act_{k}$ is initially zero, then it remains zero for all time (see also Remark \ref{rem.invariance} at the end of Section \ref{sec.XL}).
This invariance property is known as ``imitation'' \citep{Wei95}, so the dynamics \eqref{eq.nGD} may be seen as a higher order extension of the class of \emph{imitative game dynamics} introduced by \cite{BW96}:
in particular, \eqref{eq.nGD} is the higher order extension of the general imitative equation $\dot x_{k\alpha} = x_{k\alpha} (w_{k\alpha} - \insum_{\beta}^{k} x_{k\beta} w_{k\beta})$ in the same way that \eqref{eq.nRD} extends the standard replicator dynamics \eqref{eq.1RD} to an $n$-th order setting.

Of course, if the payoff observables $w_{k\alpha}$ are not correlated with the game's payoffs $u_{k\alpha}$, the dynamics \eqref{eq.nGD} will not lead to any sort of rational play over time.
To account for this, \cite{SZ92} considered the aggregate-monotonicity criterion
\begin{equation}
\label{eq.aggregate.monotonicity}
\tag{AM}
w_{k}(q_{k}';x_{-k}) > w_{k}(q_{k};x_{-k})\;
\text{ if and only if }\;
u_{k}(q_{k}';x_{-k}) > u_{k}(q_{k};x_{-k}),
\end{equation}
with $x_{-k}\in\strat_{-k}\equiv\prod_{\ell\neq k}\strat_{\ell}$ and $q_{k}, q_{k}'\in\strat_{k}$.
Accordingly, following \cite{HW96}, we will say that the higher order dynamics \eqref{eq.nGD} are:
\begin{itemize}
\setlength{\itemsep}{0pt}
\setlength{\parsep}{0pt}
\setlength{\parskip}{1pt}
\item
\emph{aggregate-monotonic} if \eqref{eq.aggregate.monotonicity} holds for all $q_{k}, q_{k}'\in\strat_{k}$.
\item
\emph{convex-monotonic} when the ``if'' direction of \eqref{eq.aggregate.monotonicity} holds for all pure $q_{k}'$.
\item
\emph{concave-monotonic} when the ``if'' direction of \eqref{eq.aggregate.monotonicity} holds for all pure $q_{k}$.
\item
\emph{payoff-monotonic} if \eqref{eq.aggregate.monotonicity} holds for $q_{k}$ and $q_{k}'$ that are \emph{both} pure.
\end{itemize}

In the first order regime, \cite{SZ92} showed that payoff-mo\-no\-to\-nic (resp. aggregate-monotonic) dynamics eliminate all pure (resp. mixed) dominated strategies.
This result was extended by \cite{HW96} to pure strategies which are dominated by mixed ones in convex-monotonic dynamics, while \cite{Vio11} recently established the dual result for concave dynamics.
In the same spirit, the rationality analysis of Sections \ref{sec.dominance} and \ref{sec.folk} yields:

\begin{proposition}
\label{prop.dom.monotonic}
For any interior initial condition, we have:
\begin{itemize}
\setlength{\itemsep}{0pt}
\setlength{\parsep}{0pt}
\setlength{\parskip}{1pt}
\item
Aggregate-monotonic $n$-th order dynamics eliminate all dominated strategies.
\item
Convex (resp. concave) monotonic $n$-th order dynamics eliminate all pure (resp. mixed) strategies that are dominated by mixed (resp. pure) strategies.
\item
Payoff-monotonic $n$-th order dynamics eliminate all pure strategies that are dominated by pure strategies.
\end{itemize}
If, in addition, players start at rest ($\dot x(0) = \dotso = x^{(n-1)}(0) = 0$), then the above conclusions hold with the characterization ``dominated'' replaced by ``weakly dominated''.
Finally, the rate of extinction is exponential in $t^{n}$ (or $t^{n-1}$ for weakly dominated strategies) in the sense of \eqref{eq.domrate.mixed}/\eqref{eq.domrate.weak}.
\end{proposition}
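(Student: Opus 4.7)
The plan is to adapt the proofs of Theorems \ref{thm.dom} and \ref{thm.dom.weak} to the generalized dynamics \eqref{eq.nGD}: I would keep the Kullback--Leibler comparator $V_k(x) = \dkl(q_k\midd x_k) - \dkl(q_k'\midd x_k)$ unchanged and substitute the observables $w_{k\alpha}$ for the payoffs $u_{k\alpha}$ through the generalized learning scheme \eqref{eq.nGLD}, relying on each monotonicity hypothesis to convert $u$-dominance of the relevant type into $w$-dominance pointwise along orbits.

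The core computation is identical to the one in the proof of Theorem \ref{thm.dom}: the identity
\[
V_k(x) = \lambda_k \insum_\alpha^k (q_{k\alpha}' - q_{k\alpha})\, y_{k\alpha} + h_k(q_k, q_k')
\]
depends only on the Gibbs map \eqref{eq.Gibbs}, so differentiating $n$ times along a solution of \eqref{eq.nGD} and using \eqref{eq.nGLD} gives
\[
\ddt[n] V_k(x(t)) = \lambda_k \insum_\alpha^k (q_{k\alpha}' - q_{k\alpha})\, w_{k\alpha}(x(t)).
\]
For a dominance relation $q_k \prec q_k'$ of the type covered by the hypothesis (both mixed under aggregate monotonicity; both pure under payoff monotonicity; the appropriate pure/mixed combination under convex or concave monotonicity), the assumption guarantees that the right-hand side is strictly positive for every current profile, and thus \textemdash{} by continuity of $w$ and compactness of $\strat$ \textemdash{} uniformly bounded below by some $c>0$. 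Taylor's theorem with Lagrange remainder (verbatim as in \eqref{eq.domrate.estimate}) then yields the $\bigoh(t^n)$ rate \eqref{eq.domrate.mixed}, and the induction on rounds of elimination from the end of the proof of Theorem \ref{thm.dom} carries over unchanged: once the non-$r$-rationalizable shares vanish asymptotically, the surviving subgame inherits monotonicity and the base case applies.

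For the weak case with rest initial conditions, I would combine the rest assumption with Fa\`a di Bruno's formula (applied to $\log x_{k\alpha}$, and hence to $V_k$) to obtain $V_k^{(r)}(0) = 0$ for $r = 1, \dotsc, n-1$. Contraposition of the strict-to-strict transfer in each monotonicity hypothesis upgrades weak $u$-dominance $q_k \preccurlyeq q_k'$ to the pointwise estimate $\sum_\alpha^k (q_{k\alpha}' - q_{k\alpha})\, w_{k\alpha}(x) \geq 0$ on $\strat$, with strict inequality at some interior profile (namely any $x$ whose $x_{-k}$-component lies in the open region where $u$-dominance is strict). Hence $V_k^{(n)}(t) \geq 0$ along any interior orbit; integrating once, $V_k^{(n-1)}$ is non-decreasing and eventually bounded below by a strictly positive constant, and an additional $(n-1)$-fold integration from zero (using the vanishing initial derivatives) produces \eqref{eq.domrate.weak}.

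The principal technical obstacle is this last step in the weak case. In the proof of Theorem \ref{thm.dom.weak}, payoff multilinearity explicitly displayed the positive weights $\prod_{\ell\neq k} x_{\ell,\alpha_\ell}(t)$ attached to each adversarial pure profile, making the positivity of $V_k^{(n-1)}(t)$ transparent. Here $w_{k\alpha}$ need not admit such a decomposition, so one must argue instead that an interior orbit cannot remain confined to the closed proper subset $\{x\in\strat : \insum_\alpha^k (q_{k\alpha}' - q_{k\alpha})\, w_{k\alpha}(x) = 0\}$: the invariance of $\mathrm{rel\,int}(\strat)$ under the imitative dynamics (inherited from the Gibbs map) together with continuity of $w$ force $x(t)$ to spend a positive amount of time in the open strict-inequality region, furnishing the required lower bound and completing the argument.
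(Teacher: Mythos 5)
Your proposal follows the paper's own proof essentially verbatim: the paper likewise just replaces $u$ by $w$ in the displays \eqref{eq.diffdom} and \eqref{eq.diffdom.weak} and invokes the appropriate monotonicity condition for each pure/mixed pairing, so the strict-dominance half and the Taylor/integration machinery are exactly as you describe. The one place where you go beyond the paper\textemdash and where your argument needs a small repair\textemdash is the final step of the weak case. The claim that invariance of $\mathrm{rel\,int}(\strat)$ together with continuity of $w$ ``forces $x(t)$ to spend a positive amount of time in the open strict-inequality region'' does not follow as stated: interiority of the orbit by itself does not preclude confinement to a closed proper subset of the interior. The gap closes more directly by exploiting the multilinearity of the game's \emph{actual} payoffs $u$ (which is always available, even though $w$ need not be multilinear): writing
\begin{equation*}
u_{k}(q_{k}';x_{-k}) - u_{k}(q_{k};x_{-k})
= \insum_{\alpha_{-k}} c_{\alpha_{-k}}\, x_{\alpha_{-k}},
\qquad
x_{\alpha_{-k}} = \inprod_{\ell\neq k} x_{\ell\alpha_{\ell}},
\end{equation*}
weak dominance forces $c_{\alpha_{-k}}\geq0$ for every pure profile and $c_{\alpha_{-k}}>0$ for at least one, so every \emph{interior} $x$ satisfies the strict $u$-inequality; the relevant monotonicity condition then yields $\insum_{\alpha}^{k}(q_{k\alpha}'-q_{k\alpha})\,w_{k\alpha}(x(t))>0$ at \emph{every} $t$ along an interior orbit, and $V_{k}^{(n-1)}(t)=\int_{0}^{t}V_{k}^{(n)}(s)\,ds$ is positive and non-decreasing, hence bounded below by a positive constant for large $t$. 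This route also sidesteps your appeal to ``contraposition,'' which is only available for the two-sided conditions (aggregate and payoff monotonicity) and not for the one-directional convex/concave variants, where weak $u$-dominance alone does not transfer to $\insum_{\alpha}^{k}(q_{k\alpha}'-q_{k\alpha})\,w_{k\alpha}\geq0$ on all of $\strat$.
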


\begin{proof}
The crucial point in the proof of Theorems \ref{thm.dom} (resp.~Theorem \ref{thm.dom.weak}) is the lower bound for the $n$-th (resp. $(n-1)$-th) derivative of the difference $V_{k}(x) = \dkl(q_{k}\midd x_{k}) - \dkl(q_{k}' \midd x_{k})$ which determines the rate of extinction of dominated (resp.~weakly dominated) strategies.
Thus, by replacing $u$ by $w$ in \eqref{eq.diffdom} (resp. \eqref{eq.diffdom.weak}), and using the appropriate monotonicity condition for each case of dominance (pure/mixed by pure/mixed), our assertion follows along the same lines as Theorem \ref{thm.dom} (resp.~Theorem \ref{thm.dom.weak}).
\end{proof}

In the same spirit, we obtain the following counterpart to the higher order folk theorem for higher order payoff-monotonic dynamics:

\begin{proposition}
\label{prop.folk.monotonic}
The conclusions of Theorem \ref{thm.folk} hold for all higher order ($n\geq2$) payoff-monotonic dynamics.
\end{proposition}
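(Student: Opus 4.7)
The plan is to follow the proof of Theorem~\ref{thm.folk} essentially line by line, substituting the payoff observables $w_{k\alpha}$ for the game's payoffs $u_{k\alpha}$ everywhere. The only bridge one needs between the two is the payoff-monotonicity hypothesis applied to pure strategies, which preserves strict inequalities and equalities in both directions: for $\alpha,\beta\in\act_{k}$, the relations $w_{k\alpha}(x) > w_{k\beta}(x)$, $w_{k\alpha}(x) = w_{k\beta}(x)$ and $w_{k\alpha}(x) < w_{k\beta}(x)$ correspond respectively to $u_{k\alpha}(x) > u_{k\beta}(x)$, $u_{k\alpha}(x) = u_{k\beta}(x)$ and $u_{k\alpha}(x) < u_{k\beta}(x)$.

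For Part~I, the payoff-like term in \eqref{eq.nGD} vanishes at $q$ iff $w_{k\alpha}(q) = w_{k\beta}(q)$ for all $\alpha,\beta\in\supp(q_{k})$, and by pure-strategy payoff-monotonicity this is equivalent to $u_{k\alpha}(q) = u_{k\beta}(q)$ on the support, i.e.~to $q$ being a restricted equilibrium; the converse direction (starting at rest yields a stationary orbit) goes through unchanged because it relies only on homogeneity of the Gibbs map, not on the particular form of $w$. For Parts~II and~III, I would argue by contradiction exactly as in the proof of Theorem~\ref{thm.folk}: if $q$ is not Nash, one finds $k\in\play$, $\alpha\in\supp(q_{k})$ and a \emph{pure} $\beta\in\act_{k}$ with $u_{k\alpha}(q) < u_{k\beta}(q)$; payoff-monotonicity transfers this to $w_{k\alpha}(q) < w_{k\beta}(q)$, and continuity gives a neighborhood $U$ of $q$ and an $\eps > 0$ with $w_{k\beta}(x) - w_{k\alpha}(x) \geq \eps$ on $U$. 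Plugging into \eqref{eq.nGLD} yields $y_{k\beta}^{(n)}(t) - y_{k\alpha}^{(n)}(t) \geq \eps$, forcing $\log\bigl(x_{k\beta}(t)/x_{k\alpha}(t)\bigr)\to +\infty$ along any orbit that remains in $U$, which contradicts $x_{k\alpha}(t) \geq q_{k\alpha}/2$.

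For the direct half of Part~IV, if $q = (e_{1,0},\dotsc,e_{N,0})$ is strict Nash then payoff-monotonicity on pure strategies yields $w_{k,0}(q) > w_{k,\mu}(q)$ for every $\mu\neq 0$ and $k\in\play$, so by continuity we get a neighborhood on which $w_{k,0}(x) - w_{k,\mu}(x) \geq M > 0$ uniformly. The relative-score dynamics $z_{k\mu}^{(n)} = w_{k\mu}(x) - w_{k,0}(x) \leq -M$ then feeds into Taylor's theorem with Lagrange remainder to give exactly the estimate
\begin{equation*}
z_{k\mu}(t) \leq z_{k\mu}(0) + \insum_{r=1}^{n-1} z_{k\mu}^{(r)}(0)\,t^{r}\big/r! - Mt^{n}\big/n!,
\end{equation*}
from which the escape-to-$-\infty$ argument of Theorem~\ref{thm.folk} yields $x(t)\to q$ provided the initial state lies in a suitable neighborhood of $(q,0,\dotsc,0)$. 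For the converse, I would appeal to the higher-order incompressibility machinery: the proof of Proposition~\ref{prop.incompressibility} only uses that in the first-order recast \eqref{eq.nGLD.order1} each variable $y_{k\alpha}^{r}$ is absent from its own right-hand side, which is insensitive to whether that right-hand side is $u_{k\alpha}$ or $w_{k\alpha}$. Hence Theorem~\ref{thm.nonconv} carries over verbatim to \eqref{eq.nGD}, so only vertices of $\strat$ can attract open sets of initial states; and the standard edge argument of \citet{RW95} identifying attracting vertices with strict Nash equilibria transfers because it compares pure payoffs across the edges emanating from $q$, which payoff-monotonicity preserves.

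The main subtlety is the converse of Part~IV: one must be sure that both the incompressibility of the flow and the Ritzberger--Weibull edge argument remain valid in the general payoff-monotonic setting. The first is essentially free from the structure of \eqref{eq.nGLD.order1}, while the second is precisely the situation that pure-strategy payoff-monotonicity is tailored to handle, so neither step introduces new difficulties. Aside from that, every other ingredient of Theorem~\ref{thm.folk} is a statement about pure-strategy payoff comparisons (whose sign is preserved) and about the homogeneity of the Gibbs map (which is untouched), so the entire proof extends without further modification.
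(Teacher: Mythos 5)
Your proposal is correct and follows exactly the route the paper takes: the paper's own proof is the one-line observation that one replaces $u$ by $w$ throughout the proof of Theorem \ref{thm.folk} and invokes the payoff-monotonicity condition \eqref{eq.aggregate.monotonicity}, and your write-up is simply a careful expansion of that substitution (including the correct observations that pure-strategy payoff-monotonicity preserves equalities as well as strict and weak inequalities, and that the converse of Part IV rests on Proposition \ref{prop.nonconv.imit} and the edge argument, both of which survive the replacement).
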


This proposition follows by replacing $u$ with $w$ in the proof of Theorem \ref{thm.folk} and using the payoff-monotonicity condition \eqref{eq.aggregate.monotonicity}, so there are no qualitative differences between first and higher order payoff-monotonic dynamics.
On the other hand, Proposition \ref{prop.incompressibility} and Theorem \ref{thm.nonconv} hold for a much wider class of higher order dynamics:

\begin{proposition}
\label{prop.nonconv.imit}
The flow of the generalized learning dynamics \eqref{eq.nGLD}, $n\geq2$, is volume-pre\-ser\-ving;
moreover, the same holds for \eqref{eq.nGD} w.r.t. a non-Euclidean volume form on $\strat$.
Consequently, the conclusions of Theorem \ref{thm.nonconv} hold for all higher order imitative dynamics of the form \eqref{eq.nGD}.
\end{proposition}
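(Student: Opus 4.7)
The plan is to proceed in three moves that exactly mirror the replicator case but replace $u_{k\alpha}$ with the generic payoff observable $w_{k\alpha}$: first establish incompressibility at the level of the score dynamics, then transport this to the strategy space via the reduced Gibbs diffeomorphism, and finally rerun the mean-value argument of Theorem \ref{thm.nonconv} to conclude nonattraction of interior points.

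The incompressibility of \eqref{eq.nGLD} is an immediate Liouville calculation that exactly parallels the proof of Proposition \ref{prop.incompressibility}. Rewriting the dynamics as a first-order system in the tower $(y^{0},y^{1},\ldots,y^{n-1})$ with $\dot y^{r}_{k\alpha}=y^{r+1}_{k\alpha}$ for $r<n-1$ and $\dot y^{n-1}_{k\alpha}=w_{k\alpha}(x)$, the variable $y^{r}_{k\alpha}$ never appears in the equation for $\dot y^{r}_{k\alpha}$, so the Jacobian has identically zero diagonal entries and vanishing trace; hence the flow preserves Lebesgue volume in $\R^{\act\times n}$. This argument is insensitive to whether the top-order forcing term is $u_{k\alpha}$ or a general continuous $w_{k\alpha}$. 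To transport the conclusion to \eqref{eq.nGD}, pass to the relative scores $z_{k\mu}=y_{k\mu}-y_{k,0}$ of Section \ref{sec.folk}: these satisfy a first-order system of the same shift structure and are therefore Lebesgue-incompressible as well. By Lemma \ref{lem.diffeo}, the reduced Gibbs map $G^{\ast}$ is a diffeomorphism from $\prod_{k}\R^{\act_{k}^{\ast}}$ onto the relative interior of $\strat$, and lifts to a diffeomorphism between the associated phase spaces via successive differentiation. Pushing forward Lebesgue measure through this lift yields the promised smooth, non-vanishing volume form $\Omega_{\ast}$ on the interior part of $\phase$, invariant under the flow of \eqref{eq.nGD}.

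For the extension of Theorem \ref{thm.nonconv}, suppose toward a contradiction that some non-pure $q\in\strat$ attracts a relatively open set $W\subseteq\phase$ whose base-point support in $\strat$ does not properly contain $\supp(q)$. Since every imitative dynamics preserves each face of $\strat$, all orbits starting in $W$ stay in $\strat'=\prod_{k}\simplex(\supp(q_{k}))$, so after restricting $\game$ to $\strat'$ we may assume that $q$ is relatively interior. Setting $z^{\ast}=(G^{\ast})^{-1}(q)$, every orbit starting in $W$ satisfies $z(t)\to z^{\ast}$, while the top-order derivatives $z^{(n)}_{k\mu}(t)=w_{k\mu}(x(t))-w_{k,0}(x(t))$ remain uniformly bounded by compactness of $\strat$ and continuity of $w$. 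A cascading mean-value argument then shows $z^{(r)}(t)\to 0$ for every $1\le r\le n-1$, so the entire orbit converges in $\phase$ to the single rest state $(z^{\ast},0,\ldots,0)$. But then the open set $W$ of positive $\Omega_{\ast}$-volume would have to collapse to a single point of $\phase$, contradicting the invariance of $\Omega_{\ast}$ established above.

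The only delicate step is the cascading derivative-convergence statement, which is the main obstacle. For $n=2$ it is precisely the sub-interval/mean-value calculation already carried out in the proof of Theorem \ref{thm.nonconv}. For general $n$ it must be executed layer by layer: one first rules out any nonzero limit of $z^{(n)}(t)$ (such a limit would produce polynomial-in-$t$ growth of lower-order derivatives incompatible with $z(t)\to z^{\ast}$), thereby forcing $w_{k\mu}(q)=w_{k,0}(q)$, and then one extracts $z^{(n-1)}(t)\to 0$, $z^{(n-2)}(t)\to 0$, and so on down to $\dot z(t)\to 0$ by iterating the same interval-length contradiction; alternatively, a Landau--Kolmogorov inequality delivers the vanishing of all intermediate derivatives in one stroke once $z(t)$ converges and $z^{(n)}(t)$ is bounded. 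With this step in hand, the remainder is a verbatim transcription of the replicator arguments, since neither the trace computation nor the diffeomorphism $G^{\ast}$ is sensitive to the specific form of the payoff observables.
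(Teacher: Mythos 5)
Your proposal is correct and follows essentially the same route as the paper: the paper's own proof is a one-line observation that incompressibility of \eqref{eq.nGLD} follows from the first-order reformulation \eqref{eq.nGLD.order1} with $u$ replaced by $w$ (since the observables $w_{k\alpha}$ depend only on $x$ and not on the higher-order variables, the Jacobian trace vanishes), after which the argument of Theorem \ref{thm.nonconv} carries over unchanged. Your write-up simply makes explicit the details the paper leaves implicit \textendash\ the Liouville trace computation, the pushforward of Lebesgue measure through the lifted diffeomorphism $G^{\ast}$, and the cascading mean-value (or Landau--Kolmogorov) step for general $n$, which the paper itself only writes out for $n=2$.
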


\begin{proof}
Since the payoff observables $w_{k\alpha}$ do not depend on $\dot x$ and other higher order derivates, incompressibility stems from the equivalent first order formulation \eqref{eq.nGLD.order1} of \eqref{eq.nGLD} with $u$ replaced by $w$;
the conclusions of Theorem \ref{thm.nonconv} are then proved in the same way.
\end{proof}

Interestingly, if $n=1$ and the payoff observables $w_{k\alpha}$ do not depend on $x_{k\alpha}$, then the system \eqref{eq.nLD.nsteps} remains divergence-free and the conclusions of Proposition \ref{prop.nonconv.imit} continue to apply.
For instance, this explains why the asymmetric replicator equation is divergence-free whereas its symmetric counterpart isn't:
in the case of the former, we have $w_{k\alpha}(x) = u_{k\alpha}(x) = u_{k}(\alpha;x_{-k})$, a quantity which is independent of $x_{k\alpha}$;
in the symmetric case however, if $U$ denotes the payoff matrix of the game being played, then we would have $w_{\alpha}(x) = u_{\alpha}(x) = u(\alpha;x) = \insum_{\beta} U_{\alpha\beta} x_{\beta}$, showing that, in general, the symmetrized replicator dynamics are not divergence-free.%

In view of the above, Proposition \ref{prop.nonconv.imit} clashes quite strongly with the first order regime.
For instance, if we take Maynard Smith's payoff-adjusted variant of the replicator dynamics (whereby players divide \eqref{eq.1RD} by their average payoffs), then there exist games with asymptotically stable interior equilibria (for instance, see the Matching Pennies example of \citealp{Wei95}).
In higher orders however, Proposition \ref{prop.nonconv.imit} shows that this can no longer be the case:
the learning dynamics \eqref{eq.nGLD} endow orbits with a tangential acceleration component, and this acceleration carries them away from interior equilibria and towards the boundary of $\strat$.
As a result, only vertices of $\strat$ can be attracting in higher order dynamics of the general class \eqref{eq.nGD}.

\section{Concluding remarks.}

The results in the present paper suggest that higher order considerations open the door to some intriguing new questions and directions in the study of learning and evolution in games.
For one, the elimination of weakly dominated strategies is a key feature of higher order dynamics which puts them firmly apart from all their first order siblings;
coupled with the survival of \emph{iteratively} weakly dominated strategies, this provides a dynamic justification of the well-known \sw rationalizability process of \cite{DF90} which cannot otherwise arise from first order considerations.
Furthermore, the population interpretation of our higher order dynamics by means of ``long-term'' variants of existing revision protocols paves the way to a wide array of new classes of dynamics where the impossibility theorem of \cite{HMC03} no longer bars the way \textendash\ a point also made by \cite{SA05} in the context of derivative action fictitious play algorithms.

Nevertheless, even before considering other classes of higher order dynamics, several important questions remain:
For instance, are the higher order replicator dynamics consistent (e.g. as in \citealp{Sor09})?
What can we expect in symmetric, single-population environments (where payoffs are no longer multilinear) or with respect to setwise solution concepts \textendash\ such as sets that are closed under better replies \citep{RW95}?
Finally, from the point of view of learning, our approach has been focused on continuous time with players being able to observe (or otherwise calculate) the payoffs associated to their mixed strategies.
This last assumption is relatively harmless in a nonatomic population setting, but crucial from an atomic point of view;
in particular, it is only natural to ask whether our results continue to apply in discrete-time environments with a finite number of players only being able to observe their in-game payoffs.

\section*{Acknowledgments.}

This paper has benefited greatly from the insightful comments of two anonymous referees and the associate editor who prompted us to include the discussion of Section \ref{sec.evolution}.
We are also grateful to Josef Hofbauer for pointing out an important issue in an earlier formulation of Theorem \ref{thm.folk} and to Chris\-tina Paw\-lo\-witsch and Yannick Viossat for many discussions on the topic;
finally, we would also like to thank the audiences of the Paris Game Theory Seminar, the Paris Working Group on Evolutionary Games, and the 2011 International Conference on the Mathematical Aspects of Game Theory and its Applications for their comments and penetrating questions (Bill Sandholm and J\"orgen Weibull in particular).



\appendix

\section{Asymptotic stability in terms of relative scores.}
\label{app.folk}

Our aim in this appendix will be to make precise sense of the asymptotic stability statement for the dynamics \eqref{eq.nZD} in Section \ref{sec.folk};
also, to simplify notation, we will drop the index $k$ and rely on context to resolve any ambiguities.

To begin with, let $\R' = \R\cup\{-\infty\}$ denote the real number line extended to one end by adjoining $-\infty$.
Formally, we will use all extended real number operations for $-\infty$, and $\R'$ will be made into a topological space by defining the basic neighborhoods of $-\infty$ to be all sets of the form $U_{a} = \{x\in\R': x<a\}$;
by taking the product topology, we will then form the extended product space $Z = \R^{\act^{\ast}}\cup\{\neginf\}$ by adjoining the ``point at negative infinity'' $\neginf\equiv(-\infty,\dotsc,-\infty)$ to $\R^{\act^{\ast}}$.
In this way, the reduced Gibbs map \eqref{eq.Gibbs.reduced} may be extended to $Z$ by mapping $\neginf$ to $q\in\strat$, and, by the topology of $Z$, this extension will be continuous.

To incorporate initial conditions at negative infinity for \eqref{eq.nZD}, we will work with the extended configuration space $Z$ and the similarly extended phase space $\Omega' = \Omega\cup\left(\{\neginf\}\times Z\times\dotsm\times Z\right)$, where $\Omega$ denotes the original phase space of \eqref{eq.nZD}.
To extend the flow of \eqref{eq.nZD} to all of $\Omega'$, we will define trajectories starting at $\neginf$ to have $z(t) = \neginf$ for all $t\geq0$;
then, using the extended real number operations for $-\infty$ if needed, we will have $z_{\mu}(t) = z_{\mu}(0) +\dot z_{\mu}(0) + \frac{1}{2}\ddot z_{\mu}(0) t^{2} + \dotsb + \int \dotsi \int \Delta u_{\mu}(x(t_{1})) \dd t_{1} \dotsm d t_{n}$ for all $t_{n}\equiv t>0$ and for all $(z(0), \dot z(0),\dotsc)\in\Omega'$, where $x(s) = G^{\ast}(z(s))$.
As a consequence of the above, it is then easy to see that the above collection of trajectories indeed defines a continuous flow on $\Omega'$ which reduces to the flow of \eqref{eq.nZD} on $\Omega$.

With regards to asymptotic stability, Theorem \ref{thm.folk} shows that if all initial conditions $z_{k\mu}(0)$, $\dot z_{k\mu}(0)$ are uniformly small (say, less than some $a\in\R$), then we will have $z_{\mu}(t) \to -\infty$;
moreover, an easy adaptation of the proof of Theorem \ref{thm.folk} shows that the same will hold for all derivates $\dot z_{\mu}(t), \ddot z_{\mu}(t),\dotsc$ of $z(t)$ as well.
This shows that if $z(t)$ starts at a neighborhood of $\{\neginf\}\times\{\neginf\}\times\dotsm$ in $\Omega'$, then it will converge there;
completing the argument for Lyapunov stability as in the proof of Theorem \ref{thm.folk}, we thus obtain:

\begin{proposition*}
With respect to the extended real number topology defined above, the state $\{\neginf\}\times\{\neginf\}\times\dotsm$ which corresponds to the strict equilibrium $q$ is asymptotically stable in \eqref{eq.nZD}.
\end{proposition*}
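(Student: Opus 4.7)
The plan is to establish both Lyapunov stability and attraction of the extended state $\neginf^{(n)} := (\neginf, \neginf, \dotsc, \neginf) \in \Omega'$ by iterating the Taylor-expansion estimate already used in the proof of Part IV of Theorem \ref{thm.folk}, this time applied to every derivate of $z$ rather than just to $z$ itself. In the product-extended topology on $\Omega'$, the basic neighborhoods of $\neginf^{(n)}$ have the form
\begin{equation}
W_a = \{(z^{(0)}, \dotsc, z^{(n-1)}) \in \Omega' : z_{k\mu}^{(r)} < a \text{ for all } k \in \play,\, \mu \in \act_k^{\ast},\, r = 0, \dotsc, n-1\},
\end{equation}
with $a \in \R$, so Lyapunov stability reduces to finding, for each such $a$, a value $b \leq a$ such that every trajectory of \eqref{eq.nZD} starting in $W_b$ remains in $W_a$ forever.

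First I would take $a$ sufficiently negative that $W_a$ projects under $G^{\ast}$ into the neighborhood of $q$ on which strictness supplies a uniform gap $u_{k,0}(x) - u_{k\mu}(x) \geq M > 0$; this is legitimate because the Gibbs formula forces $x_{k,0} \to 1$ as $a \to -\infty$, so the joint continuity of $G^{\ast}$ and the payoffs furnishes such an $M$. Next, for a candidate $b \leq \min\{a, 0\}$, let $z(t)$ solve \eqref{eq.nZD} with initial data in $W_b$, and set $\tau = \inf\{t \geq 0 : z(t) \notin W_a\}$. For $t \leq \tau$, the top-order equation \eqref{eq.nZD} gives $z_{k\mu}^{(n)}(t) \leq -M$, and successive integration via Taylor's theorem with integral remainder yields, for each $r = 0, \dotsc, n-1$, a bound of the form
\begin{equation}
z_{k\mu}^{(r)}(t)
\leq \sum_{s=r}^{n-1} z_{k\mu}^{(s)}(0)\, \frac{t^{s-r}}{(s-r)!} - M\, \frac{t^{n-r}}{(n-r)!}.
\end{equation}
Because every $z_{k\mu}^{(s)}(0) \leq b \leq 0$, each term of the polynomial part is non-positive for $t \geq 0$, so the right-hand side is majorized by $z_{k\mu}^{(r)}(0) \leq a$ throughout $[0, \tau]$; this forces $\tau = \infty$ (Lyapunov stability), and the leading term $-M t^{n-r}/(n-r)!$ drives each $z_{k\mu}^{(r)}(t)$ to $-\infty$ as $t \to \infty$, which in the extended topology is precisely convergence to $\neginf^{(n)}$ (attraction). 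Initial conditions lying on the added stratum $\{\neginf\} \times Z \times \dotsm \times Z$ cause no extra trouble: the extended flow keeps the position component pinned at $\neginf$, and the problem reduces to an analogous one on the corresponding invariant subface of $\strat$, so a short induction on the number of non-$\neginf$ coordinates suffices.

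The main obstacle I anticipate is not the analytic estimate — which is essentially built into the proof of Theorem \ref{thm.folk} — but the topological bookkeeping needed to make the extended flow on $\Omega'$ a legitimate object of Lyapunov-stability theory. Concretely, one has to check that neighborhoods of $\neginf^{(n)}$ really do contain sets of the form $W_a$ under the adjoined topology, and that the extended flow is continuous at states involving $\neginf$. This continuity check reduces to observing that $G^{\ast}$ extends continuously by sending $\neginf \mapsto q$, that the payoff differences $u_{k\mu} - u_{k,0}$ therefore extend continuously to the boundary, and that the integral representation of $z^{(r)}_{k\mu}(t)$ behaves correctly under the convention $-\infty + (\text{finite}) = -\infty$. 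Once these items are nailed down, the Taylor estimate above closes the argument and yields the proposition.
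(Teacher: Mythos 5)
Your proposal is correct and follows essentially the same route as the paper: the appendix proof likewise reduces everything to the Taylor--Lagrange estimate from Part IV of Theorem \ref{thm.folk}, applied not only to $z$ but to each derivate $z^{(r)}$, to conclude that all of them stay bounded above and escape to $-\infty$, which in the adjoined topology is exactly Lyapunov stability plus attraction of $\neginf\times\neginf\times\dotsm$. Your observation that taking all initial data below some $b\leq 0$ makes the polynomial part of the bound non-positive is a slightly cleaner way to get the invariance of $W_a$ than the paper's $\log 2$ bound on the polynomial's maximum, but it is the same estimate at heart; note also that since the paper's $Z$ adjoins only the single point $\neginf$ (no partially infinite states), your closing induction on the number of non-$\neginf$ coordinates is unnecessary.
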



\begingroup
\addtolength{\bibsep}{3pt}
\bibliographystyle{chicago}
\bibliography{Bibliography}
\endgroup

\end{document}